\let\isout\sout
\renewcommand{\sout}[1]{\ifmmode\text{\isout{\ensuremath{#1}}}\else\isout{#1}\fi}
\newtheorem{theorem}{Theorem}[section]
\newtheorem{lemma}[theorem]{Lemma}
\newtheorem{corollary}[theorem]{Corollary}
\theoremstyle{definition}
\newtheorem*{definition}{Definition}
\def\R{\mathbb{R}}
\def\u{{\bf u}}
\def\w{{\bf w}}
\def\vv{{\bf v}}
\def\d{{\bf d}}
\def\P{\mathcal{P}}
\def\PP{\mathbb{P}}
\def\f{{\bf f}}
\newcommand{\pa}{\partial}
\def\p{\phi}
\def\hu{ \widehat{{\bf u}}}
\def\hd{ \widehat{{\bf d}}}
\def\hP{ \widehat{P}}
\def\od{ \overline{{\bf d}}}
\def\tu{ \widetilde{{\bf u}}}
\def\td{ \widetilde{{\bf d}}}
\def\tP{ \widetilde{P}}
\DeclareMathOperator{\dv}{div}
\DeclareMathOperator{\supp}{spt}
\DeclareMathOperator*{\loc}{loc}
\numberwithin{equation}{section}
\begin{document}

\title{Partial regularity of a nematic liquid crystal model with kinematic transport effects}

\author{
	Hengrong Du\footnote{Department of Mathematics, Purdue University, West Lafayette, IN 47906, USA} 
	\quad Changyou Wang\footnote{Department of Mathematics, Purdue University, West Lafayette, IN 47906,
	USA. Both authors are partially supported by NSF grant 1764417.}\\
}
\date{\today}
\maketitle

\begin{abstract}

In this paper, we will establish the global existence of a suitable weak solution to the Erickson--Leslie system modeling hydrodynamics of nematic liquid crystal flows with kinematic transports for molecules of various shapes in ${\R^3}$, which is  smooth away from a closed set of (parabolic) Hausdorff dimension
at most $\frac{15}{7}$.

\medskip

\noindent{\it Keywords}: Ericksen--Leslie system, partial regularity, kinematic transports.

\medskip

\noindent {\it MR (2010) Subject Classification}: 35B65; 35Q35.

\end{abstract}

\section{\bf Introduction}\label{section-1}
In this paper, we will study the simplified Ericksen--Leslie system modeling the hydrodynamics of nematic liquid crystals with variable degrees of orientation and  kinematic transports for molecules of various shapes: 
 $(\u,\d,P):{\R^3}\times (0,\infty)\to{\R^3}\times{\R^3}\times\R$ solves
\begin{equation}
  \left\{
  \begin{array}{l}
    \pa_t\u+\u\cdot \nabla\u+\nabla P=\nu \Delta \u-\lambda \nabla\cdot (\nabla\d\odot\nabla \d+ S_\alpha[\Delta\d-\f(\d), \d]), \\
    \nabla\cdot \u=0, \\
    \pa_t\d+\u\cdot \nabla\d-T_\alpha[\nabla\u, \d]=\gamma(\Delta\d -\f(\d)),
  \end{array}
  \right.
  \label{eqn:ELmodel}
\end{equation}
where $\u(x,t)$ represents the velocity field of the flow, $\d(x,t)$ is the macroscopic averaged orientation
field of the nematic liquid crystal modules, and $P$ stands for the pressure function. 
Here $\f(\d)=D_{\d}F(\d)=(|\d|^2-1)\d$ is the gradient of Ginzburg--Landau potential 
function $F(\d)=\frac14(1-|\d|^2)^2$.
Furthermore, 
\begin{align*}
	S_\alpha[\Delta \d-\f(\d), \d]&:=\alpha(\Delta\d-\f(\d))\otimes\d-(1-\alpha)\d\otimes(\Delta \d-\f(\d)), \\
	T_\alpha[\nabla\u, \d]&:=\alpha(\nabla\u)\d-(1-\alpha)(\nabla \u)^T\d,
\end{align*}
represents the Leslie stress tensor and the kinematic transport term respectively.
The parameter $\alpha\in[0,1]$ is the shape parameter of the liquid crystal molecule.
In particular,  $\alpha=0, \frac12,$ and $1$ corresponds to disc-like, spherical and rod-like molecule shape respectively (cf. \cite{de1993physics,Ericksen1990Variabledegree,ericksen1961conservation,jeffery1922motion}). The coefficient $\nu$ represents the fluid viscosity, $\lambda$ stands for the competition between kinetic energy and potential energy, and $\gamma$ reflects the molecular relaxation time. 

In the 1960's, Ericksen and Leslie proposed a comprehensive hydrodynamic theory of nematic liquid crystals (cf. \cite{Ericksen1962, Leslie1968}). Since then there has been a great deal of theoretical and experimental work devoted to the study of nematic liquid crystal flows. The first rigorous mathematical study for 
the simplified Ericksen--Leslie system, that is, \eqref{eqn:ELmodel} without $S_\alpha, T_\alpha$ terms, 
was made by Lin--Liu \cite{linliu1995nonparabolic}. Later in \cite{linliu1996partialregularity},
they established a partial regularity for the suitable weak solutions which satisfy the local energy inequality,
analogous to the Navier-Stokes equations by Caffarelli--Kohn--Nirenberg in \cite{CKN1982ns}. 
Very recently, the same type of regularity result was obtained for the co-rotational Beris--Edwards 
$Q$-tensor model by Du--Hu--Wang \cite{du2019suitable}. 

In this paper, we will construct a global-in-time suitable weak solution to \eqref{eqn:ELmodel}, which enjoys
a partial regularity that is slightly weaker than that of \cite{linliu1996partialregularity}. Besides its own interest,
we believe that this partial regularity may be helpful to investigate the un-corotational Beris--Edwards system
due to a similar structure of nonlinearities. There are two major difficulties in the analysis of \eqref{eqn:ELmodel}:
\begin{itemize}
\item First, as pointed out by \cite{WuXuLiu2012kinematic}, when $\alpha \neq \frac{1}{2}$  the stretching effect induced by $T_\alpha[\nabla\u, \d]$  leads to the loss of maximum principle for the director field $\d$, which plays an essential role in \cite{linliu1996partialregularity, du2019suitable}.  Here, inspired by \cite{GiaquintaGiusti1973, lin1998newproof}, we will prove an $\varepsilon_0$-regularity result by a blowing-up argument that  involves a decay estimate of renormalized $L^{3}$-norm of
both $|\nabla \d|$ and $|\u|$ and the mean oscillation of $\d$ in $L^{6}$ as well.  
\item Second, the presence of stress tensor $S_\alpha[\Delta \d-\f(\d), \d]$ brings an extra difficulty 
on the decay estimate of renormalized $L^{\frac{3}{2}}$-norm of the pressure function $P$.
While in the co-rotational regime, i.e., $\alpha=\frac{1}{2}$, we know that $S_{\frac{1}{2}}$ is anti-symmetric,
which significantly simplifies the analysis on pressure function (see \cite{du2019suitable}). 
\end{itemize}
We would like to mention that in a recent preprint \cite{koch2020partial}, G. Koch obtained a partial regularity 
theorem for certain weak solutions to the Lin--Liu model that may be weaker than suitable weak solutions
and may not obey the maximum principle, in which a smallness condition is imposed on normalized $L^6$-norm 
of $|\d|$.

Before stating our main results, we need to introduce

\subsection*{Some notations.} For $\u, \w\in {\R^3}$, $A, B\in \R^{3\times 3}$, we denote
\begin{equation*}
\u\cdot \w:=\sum_{i=1}^{3}\u_i\w_i, \quad A:B=\sum_{i, j=1}^{3}A_{ij}B_{ij}, \quad
(A\cdot\w)_j:= \sum_{i=1}^{3} A_{ij} \w_i.
\end{equation*}
and 
\begin{equation*}
\begin{split}
  (\u\otimes \w)_{ij}=\u_i \w_j,\quad (\nabla\d\odot\nabla\d)_{ij}=\sum_{k=1}^{3}\pa_i\d_k\pa_j\d_k,\\
  [(\nabla\u)\d]_i=\sum_{j=1}^{3}\pa_j \u_i\d_j, \quad [(\nabla \u)^T\d]_i=\sum_{i=1}^{3}\pa_i\u_j \d_j.
  \end{split}
\end{equation*}
Define
\begin{equation*}
  {\bf H}=\text{Closure of }\left\{ \u\in C_0^\infty({\R^3},{\R^3}):\nabla\cdot \u=0 \right\} \text{ in }L^2({\R^3}), 
\end{equation*}
and
\begin{equation*}
  {\bf V}=\text{Closure of }\left\{ \u\in C_0^\infty({\R^3}, {\R^3}): \nabla\cdot \u=0 \right\} \text{ in }H^1({\R^3}).
\end{equation*}
For $0\le k\le 5$, $\P^k$ denotes the $k-$dimensional Hausdorff measure on ${\R^3}\times \R$ with respect to the parabolic distance:
\begin{equation*}
  \delta( (x, t), (y, s))=\max\left\{ |x-y|, \sqrt{|t-s|} \right\},\ \ \forall (x, t), (y,s)\in {\R^3}\times \R.
\end{equation*}
We let $B_r(x)$ denote the ball in ${\R^3}$ with center $x$ and radius $r$. 
For $z=(x, t)\in{\R^3}\times\R_+$, denote $\PP_r(z):=B_r(x)\times[t-r^2, t]$, and
$$f_{z, r}=\fint_{\PP_{r}(z)}f:=\frac{1}{|\PP_r(z)|}\int_{\PP_r(z)}f dxdt $$
for any function $f$ on $\PP_r(z)$. 

Since the exact values of $\nu, \lambda, \gamma$ don't play roles in our analysis, we will assume 
\begin{equation*}
  \nu=\lambda=\gamma=1.
\end{equation*}
With the following identity
\begin{equation*}
\nabla\cdot (\nabla\d\odot\nabla\d)=\nabla\d\cdot \Delta \d+\nabla \big( \frac{1}{2}|\nabla\d|^2 \big), 
\ \ \nabla F(\d)=\nabla\d\cdot \f(\d),
\end{equation*}
the system \eqref{eqn:ELmodel} can also be written as
\begin{equation}
  \left\{
  \begin{array}{l}
    \pa_t \u+\u\cdot \nabla\u+\nabla P=\Delta \u-\nabla\d\cdot (\Delta\d-\f(\d))-\nabla\cdot S_\alpha[\Delta \d-\f(\d), \d], \\
    \nabla\cdot \u=0, \\
    \pa_t \d+\u\cdot \nabla\d-T_\alpha[\nabla \u, \d]=\Delta\d-\f(\d). 
  \end{array}
  \right.
  \label{eqn:ELmodel1}
\end{equation}
subject to the initial  condition
\begin{equation}
(\u, \d)|_{t=0}=(\u_0, \d_0)\quad \text{in}\ \  {\R^3}.
\label{eqn:initial}
\end{equation}
\begin{definition}
   A pair of functions $(\u, \d):{\R^3}\times(0, \infty)\to {\R^3}\times {\R^3}$ is a weak solution of \eqref{eqn:ELmodel1} and \eqref{eqn:initial}, if $(\u, \d)\in (L_t^\infty L_x^2\cap L_t^2 H_x^1)({\R^3}\times(0, \infty), {\R^3})\times (L_t^\infty H_x^1\cap L_t^2 H_x^2)({\R^3}\times(0,\infty), {\R^3})$, and for any $\p\in
  C_0^\infty({\R^3}\times[0,\infty), {\R^3})$ and $\psi\in C_0^\infty({\R^3}\times[0,\infty), {\R^3})$, with $\dv \p=0$ in ${\R^3}\times[0,\infty)$, it holds that
  \begin{equation}
  \begin{split}
    &\int_{{\R^3}\times(0, \infty)}[-\u\cdot \pa_t \p+\nabla\u\cdot \nabla\p-\u\otimes\u:\nabla\p-(\psi\cdot\nabla\d)\cdot(\Delta\d-\f(\d))]dxdt\\
    &+\int_{{\R^3}\times(0,\infty)}S_\alpha[\Delta\d-\f(\d),\d]:\nabla\p dxdt=\int_{\R^3}\u_0\cdot \p(x, 0)dx,
    \end{split}
    \label{eqn:weaku}
  \end{equation}
  \begin{equation}
  \begin{split}
    &\int_{{\R^3}\times(0, \infty)}[-\d\cdot \pa_t \psi+\nabla\d: \nabla \psi-\u\otimes \d:\nabla \psi+\f(\d)\cdot \psi]dxdt\\
    &-\int_{{\R^3}\times(0,\infty)}T_\alpha[\nabla\u, \d]\cdot\psi dxdt =\int_{\R^3}\d_0\cdot\psi(x, 0)dx.
    \end{split}
    \label{eqn:weaksol}
  \end{equation}
\end{definition}

The global and local energy inequalities for \eqref{eqn:ELmodel1} play the basic roles: 
for $t>0$,
\begin{eqnarray}
 && \int_{\R^3}\big(\frac{1}{2}\big( |\u|^2+|\nabla\d|^2 \big)+F(\d)\big)(x,t) dx
  + \int_{0}^t\int_{\R^3}\big(|\nabla\u|^2+|\Delta\d-\f(\d)|^2 \big)(x,s)dxds\nonumber\\
 && \le\int_{\R^3}^{}\big(\frac{1}{2}(|\u_0|^2+|\nabla\d_0|^2)+F(\d_0)\big)(x)dx,
  \label{eqn:globaleng}
\end{eqnarray}
\begin{eqnarray}
  &&\int_{\R^3}\big[\frac{1}{2}\left( |\u|^2+|\nabla\d|^2 \right)+F(\d)\big]\p(x,t) dx
  +\int_{0}^{t}\int_{\R^3}\left( |\nabla\u|^2+|\Delta\d|^2 +|\f(\d)|^2\right)\p(x,s) dxds\nonumber\\
 && \le \int_{0}^{t}\int_{\R^3}\big[\frac{1}{2}(|\u|^2+|\nabla\d|^2)(\pa_t\p+\Delta\p)+F(\d)\pa_t \p\big](x,s) dxds\nonumber\\
&&+\int_{0}^{t}\int_{\R^3}\big[\frac{1}{2}\left( |\u|^2+2P \right)\u\cdot \nabla\p+\nabla\d\odot\nabla\d:\u\otimes\nabla\p\big](x,s)dxds\nonumber\\
&&  +\int_{0}^{t}\int_{\R^3}\left( \nabla\d\odot\nabla\d-|\nabla\d|^2 I_3 \right):\nabla^2\p(x,s)dxds\nonumber\\
&&  +\int_{0}^{t}\int_{\R^3}S_\alpha[\Delta\d-\f(\d), \d]:\u\otimes\nabla\p(x,s) dxds\nonumber\\
&&  +\int_{0}^{t}\int_{\R^3}T_\alpha[\nabla\u, \d]\cdot(\nabla\p\cdot \nabla\d)(x,s)dxds\nonumber\\
&&  -\int_{0}^{t}\int_{\R^3}(\nabla\p\cdot\nabla\d)\cdot\f(\d)dxds-2\int_{0}^{t}\int_{\R^3}\nabla\f(\d):\nabla\d \p(x,s) dxds.
  \label{eqn:localeng}
\end{eqnarray}
provided $0\le \p \in C_0^\infty({\R^3}\times (0,t]).$

It should be noted that the following cancellation
\begin{equation}
	\int_0^t\int_{\R^3} S_\alpha[\d_1, \d_2]:\nabla \u\p dxds=\int_0^t\int_{\R^3}T_\alpha[\nabla \u, \d_2]\cdot \d_1 \p dxds
	\label{eqn:keycancell}
\end{equation}
play a critical role in the later analysis.  
\begin{definition}
	A weak solution $(\u, \d, P)\in (L_t^\infty L_x^2 \cap L_t^2 H_x^1)({\R^3}\times (0,\infty), {\R^3})\times (L_t^\infty H_x^1\cap L_t^2 H_x^2)({\R^3}\times(0,\infty), {\R^3})\times L^{\frac{5}{3}}({\R^3}\times (0, \infty))$ of \eqref{eqn:ELmodel1} is a suitable weak solution of \eqref{eqn:ELmodel1}, if in addition, $(\u, \d, P)$ satisfies the local energy inequalities \eqref{eqn:localeng}. 
\end{definition}
The main theorem of this paper concerns both the existence and partial regularity of 
suitable weak solutions to the simplified Ericksen--Leslie model. 
\begin{theorem}
  For any $\u_0\in {\bf H}, \d_0\in H^1({\R^3},{\R^3})$, there exists a global suitable weak solution $(\u, \d, P):{\R^3}\times\R_+\to {\R^3}\times {\R^3}\times \R$ of the simplified Ericksen--Leslie system \eqref{eqn:ELmodel1}
  and  \eqref{eqn:initial} such that
   \begin{equation*}
    (\u, \d)\in C^\infty({\R^3}\times(0, \infty)\setminus \Sigma), 
  \end{equation*}
  where $\Sigma\subset {\R^3}\times \R_+$ is a closed subset with $\mathcal{P}^{\sigma}(\Sigma)=0, \forall \sigma>\frac{15}{7}$.
  \label{thm:main}
\end{theorem}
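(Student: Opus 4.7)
The plan follows the scheme of Caffarelli--Kohn--Nirenberg, adapted to the present system. First I would construct a suitable weak solution; then I would establish an $\varepsilon_0$-regularity theorem at regular points through a blow-up argument; finally I would estimate the Hausdorff dimension of the singular set by a Vitali-type covering.

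For existence, I would approximate \eqref{eqn:ELmodel1} by a retarded-mollification scheme in which $\u$ is replaced by $\u*\rho_\varepsilon$ wherever it appears as a transport velocity or inside the tensors $S_\alpha, T_\alpha$. Each approximate system is globally solvable by a fixed-point or Galerkin argument and preserves the global energy identity, since the structural cancellation \eqref{eqn:keycancell} is inherited by the mollification. Uniform bounds from \eqref{eqn:globaleng} together with Aubin--Lions compactness produce weak limits $(\u,\d)$, and strong $L^2_{t,\loc}$ convergence of $\u_\varepsilon$ and $\nabla\d_\varepsilon$ suffices to pass to the limit in the nonlinear terms. The pressure $P$ is recovered from the Stokes equation satisfied by $\u$, giving $P \in L^{5/3}_{t,x}$ by Calder\'on--Zygmund theory. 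The local energy inequality \eqref{eqn:localeng} then follows by testing the approximate equations against $\phi\bigl(\tfrac{1}{2}|\u_\varepsilon|^2 + \tfrac{1}{2}|\nabla\d_\varepsilon|^2 + F(\d_\varepsilon)\bigr)$, invoking \eqref{eqn:keycancell}, and passing to the limit via lower semicontinuity of the dissipation.

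The core step is an $\varepsilon_0$-regularity statement: there exists $\varepsilon_0>0$ such that if
\begin{equation*}
\frac{1}{r_0^2}\int_{\PP_{r_0}(z_0)}\bigl(|\u|^3 + |\nabla\d|^3 + |P|^{\frac{3}{2}}\bigr)\,dxdt + \biggl(\fint_{\PP_{r_0}(z_0)}|\d - \d_{z_0,r_0}|^6\,dxdt\biggr)^{\frac{1}{2}}\le \varepsilon_0,
\end{equation*}
then $(\u,\d)\in C^\infty(\PP_{r_0/2}(z_0))$. Following Giaquinta--Giusti and Lin, I would prove this by contradiction: a sequence $(\u_k,\d_k,P_k)$ violating the conclusion at scales $r_k$ is rescaled to unit size, normalized by $\varepsilon_k$, and shown to converge weakly to a limit $(\u_\infty,\d_\infty,P_\infty)$ solving the linearized Stokes--heat system with a linear kinematic-transport perturbation. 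Linear parabolic regularity gives the required decay of the limit on smaller cylinders, contradicting the assumed blow-up profile. A Campanato--Morrey iteration then upgrades this scale-invariant decay to H\"older regularity, and a bootstrap via \eqref{eqn:ELmodel1} yields $C^\infty$. With the $\varepsilon_0$-regularity in hand, the singular set $\Sigma$ is exactly the set where the density above stays above $\varepsilon_0$ at every scale; Gagliardo--Nirenberg gives $\u,\nabla\d \in L^{10/3}_{t,x}$, $\d\in L^\infty_t L^6_x$, and $P \in L^{5/3}_{t,x}$, and applying a parabolic Vitali covering to each of the four defining quantities, optimized over the available exponents, yields $\mathcal{P}^\sigma(\Sigma)=0$ for all $\sigma>\frac{15}{7}$.

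The principal obstacle is the pressure decay estimate in the blow-up argument when $\alpha\neq\frac{1}{2}$. In the co-rotational regime $\alpha=\frac{1}{2}$, $S_{1/2}$ is antisymmetric, so its divergence contributes nothing to $P$; in the general case the Leslie stress enters $P$ through an additional Stokes problem that must be controlled in the scaling-critical $L^{3/2}$. I would split $P = P_1 + P_2 + P_3$ into Navier--Stokes, elastic, and Leslie-stress pieces and control the last by duality, using the $L^{10/3}$-bound on $\nabla\d$ together with the $L^2$-bound on $\Delta\d-\f(\d)$ from the local energy inequality. A parallel difficulty is the absence of a maximum principle for $\d$ caused by the stretching term $T_\alpha[\nabla\u,\d]$; I would bypass this by controlling the renormalized $L^6$-oscillation of $\d$ in place of $\|\d\|_{L^\infty}$ throughout the blow-up and iteration.
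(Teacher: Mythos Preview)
Your overall strategy---retarded mollification for existence, a blow-up $\varepsilon_0$-lemma with the $L^6$-oscillation of $\d$ standing in for the missing maximum principle, and a covering argument---matches the paper's. The genuine gap is in the last step. You claim that the integrabilities $\u,\nabla\d\in L^{10/3}$, $\d\in L^\infty_tL^6_x$, $P\in L^{5/3}$, fed into a Vitali covering, yield $\mathcal{P}^{\sigma}(\Sigma)=0$ for $\sigma>\tfrac{15}{7}$. The terms $|\u|^3,|\nabla\d|^3,|P|^{3/2}$ indeed give $\mathcal{P}^{5/3}=0$ on their piece of $\Sigma$; the obstruction is the oscillation term $\bigl(\fint_{\PP_r}|\d-\d_{z,r}|^6\bigr)^{1/2}$, for which $\d\in L^\infty_tL^6_x$ provides no decay as $r\to0$. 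The paper's device is to interpolate, via fractional Gagliardo--Nirenberg, between $\d\in L^{10}(Q_T)$ and $(\partial_t\d,\nabla\d)\in L^{5/3}(Q_T)$ to obtain $\d\in W^{1,\frac12}_{20/7}(Q_T)$, and then apply the parabolic Sobolev--Poincar\'e inequality to bound the $L^6$ oscillation by $r^{\frac{20}{7}-5}$ times an integral of $|\nabla\d|^{20/7}$ plus a half-time-derivative seminorm density. Both integrands lie in $L^1$, and a covering argument (extended in a straightforward way to the double time integral) then gives $\mathcal{P}^{\frac{15}{7}+\sigma}=0$. The exponent $\tfrac{15}{7}=5-\tfrac{20}{7}$ arises precisely from this interpolation and is invisible from the bounds you list.

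A related omission: your $\varepsilon_0$-hypothesis drops the condition $|\d_{z_0,r_0}|\le M$, which the paper needs because in the blow-up the averages $\d_{z_i,r_i}$ must converge to a finite constant $\overline{\d}$ that appears as a coefficient in the limiting linear system through $S_\alpha[\Delta\hd,\overline{\d}]$ and $T_\alpha[\nabla\hu,\overline{\d}]$; without it the limit equation is ill-posed. The paper imposes this bound explicitly and propagates it through an iteration lemma. Its finiteness outside a $\mathcal{P}^{\frac{15}{7}+\sigma}$-null set is again a consequence of the $W^{1,\frac12}_{20/7}$ oscillation decay, so the two issues are linked.
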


This paper is organized as follows. In section 2, we will derive both the global and local energy inequality for smooth solutions of \eqref{eqn:ELmodel1} and \eqref{eqn:initial}. In section 3, we will demonstrate the construction of suitable weak solution. In Section 4, we will prove the $\varepsilon_0$-regularity criteria for the suitable weak solutions. In section 5, we will finish the proof of the Theorem \ref{thm:main}. 
\section{Global and local energy inequalities}
In this section, we will derive both the global and local energy equalities for smooth solutions to \eqref{eqn:ELmodel1}.
\begin{lemma}
	Let $(\u, \d)\in C^\infty({\R^3}\times[0,\infty), {\R^3}\times{\R^3})$ be a solution to the simplified Ericksen-Leslie system \eqref{eqn:ELmodel1}. Then it holds that 
	\begin{equation}
	\frac{d}{dt}\int_{\R^3}\frac{1}{2}\left( |\u|^2+|\nabla\d|^2 \right)+F(\d) dx+\int_{\R^3}|\nabla\u|^2+|\Delta\d-\f(\d)|^2 dx=0.
	\label{eqn:globaleq}
	\end{equation}
	\label{lemma:globaleq}
\end{lemma}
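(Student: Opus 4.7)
My plan is to test the momentum equation against $\u$ and the director equation against $-(\Delta\d-\f(\d))$, integrate over $\R^3$, and show that all of the cross terms arising from the coupling either cancel pointwise (for the standard Ericksen stress) or cancel via the structural identity \eqref{eqn:keycancell} (for the Leslie stress and kinematic transport).

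First I would take the $L^2$ inner product of the first equation in \eqref{eqn:ELmodel1} with $\u$. Using $\nabla\cdot \u =0$, the convection term $(\u\cdot\nabla\u)\cdot\u$ and the pressure term $\nabla P\cdot \u$ integrate to zero, while the viscous term gives $-\int|\nabla\u|^2$ after one integration by parts. Integration by parts on the Leslie stress divergence yields $\int S_\alpha[\Delta\d-\f(\d),\d]:\nabla\u$. This produces
\begin{equation*}
\frac{1}{2}\frac{d}{dt}\int_{\R^3}|\u|^2 + \int_{\R^3}|\nabla\u|^2 = -\int_{\R^3}(\nabla\d\cdot(\Delta\d-\f(\d)))\cdot\u + \int_{\R^3}S_\alpha[\Delta\d-\f(\d),\d]:\nabla\u.
\end{equation*}

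Next I would take the $L^2$ inner product of the third equation with $-(\Delta\d-\f(\d))$. The time derivative term, after integrating $-\pa_t\d\cdot\Delta\d$ by parts and using $\pa_t F(\d)=\pa_t\d\cdot\f(\d)$, produces exactly $\frac{d}{dt}\int (\frac{1}{2}|\nabla\d|^2 + F(\d))$. The diffusion term on the right-hand side gives the dissipation $\int|\Delta\d-\f(\d)|^2$, leaving the two coupling terms $\int(\u\cdot\nabla\d)\cdot(\Delta\d-\f(\d))$ and $-\int T_\alpha[\nabla\u,\d]\cdot(\Delta\d-\f(\d))$ on the other side.

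Adding the two identities, the Ericksen-type cross terms cancel pointwise: by a direct index computation,
\begin{equation*}
(\nabla\d\cdot(\Delta\d-\f(\d)))\cdot\u = \sum_{i,k}u_i\,\pa_i\d_k(\Delta\d-\f(\d))_k = (\u\cdot\nabla\d)\cdot(\Delta\d-\f(\d)).
\end{equation*}
The remaining Leslie/transport pair cancels by applying \eqref{eqn:keycancell} with $\d_1=\Delta\d-\f(\d)$ and $\d_2=\d$. What is left is precisely \eqref{eqn:globaleq}.

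The only mild obstacle is bookkeeping: one must verify the pointwise cancellation using the index conventions laid out in the notation section, and must check that smoothness plus sufficient decay at infinity (which is granted since we are working with smooth solutions and the natural energy classes) justifies all integrations by parts without boundary contributions. Beyond that, every step is a standard energy identity computation, and no regularity or size issues arise.
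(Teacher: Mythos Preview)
Your proposal is correct and is exactly the standard computation the paper alludes to: the paper's own proof of this lemma consists only of the sentence ``The proof is standard'' together with citations to \cite{WuXuLiu2012kinematic, WuXuLiu2013GeneralELsystem}, and what you wrote is precisely that standard argument (test \eqref{eqn:ELmodel1}$_1$ with $\u$, test \eqref{eqn:ELmodel1}$_3$ with $-(\Delta\d-\f(\d))$, and cancel via \eqref{eqn:keycancell}).
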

\begin{proof}
	The proof is standard. See for instance \cite{WuXuLiu2012kinematic, WuXuLiu2013GeneralELsystem}.
\end{proof}
\begin{lemma}
	Let $(\u, \d, P)\in C^\infty({\R^3}\times (0, \infty), {\R^3}\times{\R^3}\times\R)$ be a solution to \eqref{eqn:ELmodel1}. Then for all $0\le \p\in C_0^\infty({\R^3}\times(0,\infty))$, it holds
	\begin{eqnarray}
 && \frac{d}{dt}\int_{\R^3}\big[\frac{1}{2}\left( |\u|^2+|\nabla\d|^2 \right)+F(\d)\big]\p dx
  +\int_{\R^3}\left( |\nabla\u|^2+|\Delta\d|^2+|\f(\d)|^2 \right)\p dx\nonumber\\
&&= \int_{\R^3}[\frac{1}{2}(|\u|^2+|\nabla\d|^2)(\pa_t\p+\Delta\p)+F(\d)\pa_t\p] dx\nonumber\\
&&+\int_{\R^3}\big[\frac{1}{2}\left( |\u|^2+2P \right)\u\cdot \nabla\p+\nabla\d\odot\nabla\d:\u\otimes\nabla\p\big]dx\nonumber\\
&&+\int_{\R^3}\left( \nabla\d\odot\nabla\d-|\nabla\d|^2 I_3 \right):\nabla^2\p(x,s)dx\nonumber\\
&&+\int_{\R^3}S_\alpha[\Delta\d-\f(\d), \d]:(\u\otimes\nabla\p)(x,s) dx
+\int_{\R^3}T_\alpha[\nabla\u, \d]\cdot(\nabla\p\cdot \nabla\d)(x,s)dx\nonumber\\
&&-\int_{\R^3}\f(\d)\cdot(\nabla\p\cdot\nabla\d)dx-2\int_{\R^3}\nabla\f(\d):\nabla\d \p(x,s) dx.
	\end{eqnarray}
	\label{lemma:localeg}
\end{lemma}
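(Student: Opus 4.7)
The plan is to derive the pointwise energy identity by simultaneously testing the first equation of \eqref{eqn:ELmodel1} against $\u\p$ and the third equation against $(\Delta\d-\f(\d))\p$, then integrating by parts and summing. The first test produces $\frac{d}{dt}\int \frac{1}{2}|\u|^2\p+\int |\nabla\u|^2\p$ on the left, while the right collects the $\frac{1}{2}|\u|^2$- and pressure-transport pieces, $\frac{1}{2}|\u|^2(\partial_t\p+\Delta\p)$, the coupling $-\int (\nabla\d\cdot(\Delta\d-\f(\d)))\cdot\u\p$, and---from one integration by parts on $\nabla\cdot S_\alpha$---the two Leslie-stress pieces $\int S_\alpha:(\nabla\u)\p+\int S_\alpha:(\u\otimes\nabla\p)$. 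The second test produces $\int |\Delta\d-\f(\d)|^2\p$ on the left and $\int \partial_t\d\cdot(\Delta\d-\f(\d))\p+\int (\u\cdot\nabla\d)\cdot(\Delta\d-\f(\d))\p-\int T_\alpha\cdot(\Delta\d-\f(\d))\p$ on the right.

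Adding the two identities, two exact cancellations drive the computation: (i) the momentum-side coupling term cancels the director-side term $\int (\u\cdot\nabla\d)\cdot(\Delta\d-\f(\d))\p$, since both equal $\sum_{j,k}u_j\partial_j d_k(\Delta d_k-f_k(\d))\p$ up to a sign; and (ii) applying \eqref{eqn:keycancell} with $\d_1=\Delta\d-\f(\d)$ and $\d_2=\d$, the piece $\int S_\alpha:(\nabla\u)\p$ from the momentum side cancels $\int T_\alpha\cdot(\Delta\d-\f(\d))\p$ from the director side.

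To recover $\frac{d}{dt}\int \frac{1}{2}|\nabla\d|^2\p$ and $\frac{d}{dt}\int F(\d)\p$ I would rewrite $\int \partial_t\d\cdot\Delta\d\p$ and $\int \partial_t\d\cdot\f(\d)\p$ by integration by parts in $x$, which leaves a residual $-\int (\partial_t\d\cdot\nabla\d)\cdot\nabla\p$. This residual is processed by substituting the director equation $\partial_t\d=\Delta\d-\f(\d)+T_\alpha[\nabla\u,\d]-\u\cdot\nabla\d$ back into it. The resulting $\int (\u\cdot\nabla\d)\cdot(\nabla\d\cdot\nabla\p)$ rearranges to $\int (\nabla\d\odot\nabla\d):(\u\otimes\nabla\p)$, while $\int \Delta\d\cdot(\nabla\d\cdot\nabla\p)$ is handled via the pointwise identity $\Delta d_k\,\partial_i d_k=\partial_j(\partial_j d_k\partial_i d_k)-\frac{1}{2}\partial_i|\nabla\d|^2$, producing $\int (\nabla\d\odot\nabla\d):\nabla^2\p-\frac{1}{2}\int |\nabla\d|^2\Delta\p$. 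Finally, to split $|\Delta\d-\f(\d)|^2=|\Delta\d|^2+|\f(\d)|^2-2\Delta\d\cdot\f(\d)$ as on the left of the lemma, I would move $2\int \Delta\d\cdot\f(\d)\p$ to the right and integrate by parts; combined with the $\int \f(\d)\cdot(\nabla\d\cdot\nabla\p)$ already generated by the $\partial_t\d$ substitution, this yields the stated $-\int \f(\d)\cdot(\nabla\p\cdot\nabla\d)-2\int \nabla\f(\d):\nabla\d\,\p$.

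The hardest part is pure bookkeeping: a dozen or so terms have to be threaded through several integrations by parts, and the final identity emerges only after combining two copies of $\int \f(\d)\cdot(\nabla\d\cdot\nabla\p)$ with cancelling multiplicities, consolidating two $\int |\nabla\d|^2\Delta\p$ contributions into the coefficient $-\frac{1}{2}$, and assembling the $\nabla\d\odot\nabla\d-|\nabla\d|^2I_3$ structure against $\nabla^2\p$. The only nontrivial analytic input is the cancellation \eqref{eqn:keycancell} between the Leslie stress and the kinematic transport; everything else is careful index manipulation.
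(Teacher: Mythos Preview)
Your proposal is correct and uses the same key ingredients as the paper: the cancellation \eqref{eqn:keycancell} between $S_\alpha$ and $T_\alpha$, and the pointwise identity $\Delta d_k\,\partial_i d_k=\partial_j(\partial_j d_k\partial_i d_k)-\tfrac12\partial_i|\nabla\d|^2$ for handling $-\int\Delta\d\cdot(\nabla\p\cdot\nabla\d)$. The only organizational difference is on the director side: the paper performs three separate tests (the $\u$-equation against $\u\p$, the differentiated $\d$-equation against $\nabla\d\p$, and the $\d$-equation against $\f(\d)\p$) and then adds, whereas you test the $\d$-equation once against $(\Delta\d-\f(\d))\p$ and then back-substitute $\partial_t\d$ into the residual $-\int\partial_t\d\cdot(\nabla\p\cdot\nabla\d)$. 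These two routes produce exactly the same collection of terms in a different order; your back-substitution step is equivalent to the paper's separate $\nabla\d\p$ and $\f(\d)\p$ tests combined. Neither approach is materially shorter, and both rely on precisely the same analytic input.
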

\begin{proof}
	Multiplying the $\u$ equation in \eqref{eqn:ELmodel1} by $\u\p$, integrating over ${\R^3}$, and by integration by parts we obtain
	\begin{eqnarray}
	&&\frac{d}{dt}\int_{\R^3}\frac{1}{2}|\u|^2\p dx+\int_{\R^3}|\nabla\u|^2\p dx\nonumber\\
	&&=\int_{\R^3}[\frac{1}{2}|\u|^2(\pa_t\p+\Delta\p) +\frac{1}{2}(|\u|^2+2P)\u\cdot \nabla\p] dx\nonumber\\
	&&-\int_{\R^3}(\u\cdot \nabla\d)\cdot \Delta\d \p dx+\int_{\R^3}(\u\cdot\nabla\d)\cdot\f(\d)\p dx\nonumber\\
   && +\int_{\R^3}S_\alpha[\Delta\d-\f(\d), \d]:(\u\otimes\nabla\p) dx                                          
    +\int_{\R^3}S_\alpha[\Delta\d-\f(\d), \d]:\nabla\u \p dx
	\label{eqn:localueq}
	\end{eqnarray}
	By taking derivatives of $\d$ equation in \eqref{eqn:ELmodel1}, we have
	\begin{equation*}
	\pa_t \nabla\d+\nabla(\u\cdot \nabla \d)=\nabla(\Delta\d-\f(\d)+T_\alpha[\nabla\u, \d]). 
	\end{equation*}
	Then multiplying this equation by $\nabla\d\p$, integrating over ${\R^3}$, we get
	\begin{eqnarray}
	&&\frac{d}{dt}\int_{\R^3}\frac{1}{2}|\nabla\d|^2 \p dx+\int_{\R^3}|\Delta\d|^2 \p dx\nonumber\\
	&&=\int_{\R^3}\frac{1}{2}|\nabla\d|^2 \pa_t \p +\int_{\R^3}(\u\cdot \nabla\d)\cdot (\Delta\d\p+\nabla\p\cdot \nabla\d) dx\nonumber\\
	&&-\int_{\R^3} \Delta\d\cdot (\nabla\p\cdot \nabla\d) dx-\int_{\R^3}\nabla(\f(\d)): \nabla\d \p dx\nonumber\\
   && -\int_{\R^3}T_\alpha[\nabla\u, \d]\cdot (\nabla\p \cdot \nabla\d)dx
    -\int_{\R^3}T_\alpha[\nabla\u, \d]\cdot \Delta\d\p dx.
	\label{eqn:localdeq}
	\end{eqnarray}
It follows from direct calculations that
	\begin{equation}
	\begin{split}
	-\int_{\R^3}\Delta\d \cdot (\nabla\p \cdot \nabla\d)dx
	&=\int_{\R^3}\frac{1}{2}|\nabla\d|^2 \Delta\p dx
	+\int_{\R^3}(\nabla\d\odot\nabla\d-|\nabla\d|^2I_3):\nabla^2\p dx.
	\end{split}
	\label{eqn:localeq2}
	\end{equation}
	 Moreover, multiplying the $\d$ equations by $\f(\d)\p$, integrating over ${\R^3}$, we get
	\begin{eqnarray}
	&&\frac{d}{dt}\int_{\R^3} F(\d)\p dx+\int|\f(\d)|^2 \p dx
	=\int_{\R^3} (F(\d)\pa_t \p -(\u\cdot\nabla\d)\cdot \f(\d)\p) dx\nonumber\\
	&&+\int_{\R^3} T_\alpha[\nabla \u, \d]\cdot \f(\d)\p dx
	-\int_{\R^3} (\nabla\f(\d):\nabla\d \p+ (\nabla\p\cdot \nabla\d)\cdot\f(\d)) dx.
	\label{eqn:Fdlocal}
	\end{eqnarray}
	Hence, by adding \eqref{eqn:localueq},  \eqref{eqn:localdeq},  \eqref{eqn:localeq2} together, and applying \eqref{eqn:keycancell}, we get \eqref{eqn:localeng}.
\end{proof}
\section{Existence of suitable weak solutions}

In this section, we will follow the same scheme in \cite{CKN1982ns,du2019suitable} to construct a suitable weak solution to \eqref{eqn:ELmodel1}.

We introduce the so-called retarded mollifier $\Psi_\theta$ for $f:{\R^3}\times \R_+\to \R$, with $0<\theta<1$, 
\begin{equation*}
\Psi_\theta[f](x,t)=\frac{1}{\theta^4}\int_{\R^4}\eta\left( \frac{y}{\theta}, \frac{\tau}{\theta} \right)\tilde{f}(x-y, t-\tau)dyd\tau, 
\end{equation*}
where 
\begin{equation*}
\tilde{f}(x, t)=
\left\{
\begin{array}{ll}
f(x, t) & t\ge 0, \\
0& t<0,
\end{array}
\right.
\end{equation*}
and the mollifying function $\eta\in C_0^\infty(\R^4)$ satisfies
\begin{equation*}
\left\{
\begin{array}{l}
\eta\ge0 \text{ and }\int_{\R^4}\eta dxdt=1,\\
\supp \eta\subset\left\{ (x, t):|x|^2 <t, 1<t<2 \right\}.
\end{array}
\right.
\end{equation*}

It is easy to verify that for $\theta\in(0,1]$ and $0<T\le \infty$ that 
\begin{equation*}
  \begin{split}
    \dv \Psi_\theta[\u]&=0 \text{ if }\dv \u=0, \\
    \sup_{0\le t\le T}\int_{\R^3}|\Psi_\theta[\w]|^2(x,t)dx&\le C\sup_{0\le t\le T}\int_{\R^3}|\w|^2 (x, t)dx, \\
    \int_{{\R^3}\times[0,T]}|\nabla\Psi_{\theta}[\w]|^2(x, t)dxdt&\le C\int_{{\R^3}\times [0,T]}|\nabla \w|^2(x, t)dxdt.
  \end{split}
\end{equation*}
Now with the mollifier $\Psi_\theta[\w]\in C^\infty(\R^4)$, we introduce the approximate system of  \eqref{eqn:ELmodel1}:
\begin{equation}
  \left\{
  \begin{array}{l}
    \pa_t \u^\theta+\Psi_\theta[\u^\theta]\cdot \nabla\u^\theta+\nabla P^\theta=\Delta \u^\theta-\nabla \Psi_\theta[\d^\theta]\cdot(\Delta \d^\theta-\f(\d^\theta)
    \\\qquad-\nabla\cdot S_\alpha[\Delta\d^\theta-\f(\d^\theta)),\Psi_\theta[\d^\theta]], \\
    \nabla\cdot \u^\theta=0, \\
    \pa_t \d^\theta+\u^\theta\cdot \nabla \Psi_\theta[\d^\theta]-T_\alpha[\nabla\u^\theta,\Psi_\theta[\d^\theta]]=\Delta \d^\theta-\f(\d^\theta).
  \end{array}
  \right. \text{ in }{\R^3}\times (0,T)
  \label{eqn:approximate}
\end{equation}
subject to the initial and boundary condition \eqref{eqn:initial}. 

For a fixed large integer $N\ge 1$, set $\theta=\frac{T}{N}\in(0, 1]$, we want to find $(\u^\theta, \d^\theta, P^\theta)$ solving \eqref{eqn:approximate}. This amounts to solving a coupling system of a Stokes-like system for $\u$ and a semi-linear parabolic-like equation for $\d$ with smooth coefficients. For $m=0$, we have $\Psi_\theta[\u^\theta]=\Psi_\theta[\d^\theta]=0$, and the system \eqref{eqn:approximate} reduces to a decoupled system
\begin{equation}
\left\{
\begin{array}{l}
\pa_t \u^\theta+\nabla P^\theta=\Delta \u^\theta, \\
\nabla\cdot \u^\theta=0, \\
\pa_t \d^\theta=\Delta \d^\theta-\f(\d^\theta), \\
(\u^\theta, \d^\theta)\Big|_{t=0}=(\u_0, \d_0)
\end{array} \text{ in }{\R^3}\times [0,\theta].
\right.
\label{eqn:decouple}
\end{equation}
which can be solved easily by the standard theory. Suppose now that the \eqref{eqn:approximate} has been solved for some $0\le k<N-1$. We are going to solve \eqref{eqn:approximate} in the time interval $[k\theta, (k+1)\theta]$ with an initial data 
\begin{equation}
(\u, \d)\Big|_{t=k\theta}=\lim_{t\uparrow k\theta}(\u^\theta, \d^\theta)(\cdot , t) \text{ in }{\R^3}.
\label{}
\end{equation}
Then one can solve the coupling system \eqref{eqn:approximate} using the Faedo-Galerkin method. In fact, for a pair of smooth test functions $(\p, \psi)\in {\bf V}\times H^2({\R^3}, {\R^3})$, the weak formulation for \eqref{eqn:approximate} reads
\begin{eqnarray}
&&\frac{d}{dt}\int_{\R^3}\u^\theta \cdot \p dx+\int_{\R^3}(\Psi_\theta[\u^\theta]\cdot \nabla\u^\theta) \cdot \p dx+\int_{\R^3}\nabla\u^\theta:\nabla\p dx \nonumber\\
&&=-\int_{\R^3}(\p\cdot\nabla\Psi_\theta[\d^\theta])\cdot(\Delta\d^\theta-\f(\d^\theta)) dx
+\int_{\R^3}S_\alpha[\Delta\d^\theta-\f(\d^\theta), \Psi_\theta[\d^\theta]]:\nabla\p dx, 
\label{weakfor1}
\end{eqnarray}
and 
\begin{eqnarray}
&&\frac{d}{dt}\int_{\R^3}\nabla\d^\theta:\nabla\psi dx-\int_{\R^3}(\u^\theta\cdot \nabla\Psi_\theta[\d^\theta])\cdot \Delta \psi dx\nonumber\\
&&=
-\int_{\R^3}(\Delta\d^\theta-\f(\d^\theta))\cdot \Delta\psi dx
-\int_{\R^3}T_\alpha [\nabla\u^\theta, \Psi_\theta[\d^\theta]]\cdot\Delta\psi dx.
\label{weakfor2}
\end{eqnarray}
We can solve the ODE system \eqref{weakfor1}-\eqref{weakfor2} with test function $(\psi, \p)$ chosen to be the basis of ${\bf V}\times H^2({\R^3}, {\R^3})$
up to a short time interval $[k\theta, k\theta+T_0]$.  Multiplying the $\u^\theta$ equation in \eqref{eqn:approximate} by $\u^\theta$, and the $\d^\theta$ equation by $-\Delta \d^\theta+\f(\d^\theta)$, integrating over ${\R^3}$ and adding two equations together we obtain
\begin{equation}
  \frac{d}{dt}\int_{\R^3}\frac{1}{2}\left( |\u^\theta|^2+|\nabla\d^\theta|^2 \right)+F(\d^\theta) dx+\int_{\R^3}\left( |\nabla\u^\theta|^2+|\Delta\d^\theta-\f(\d^\theta)|^2 \right)dx=  0.
  \label{eqn:approxGlobaleng}
\end{equation}
Next we need a uniform bound on $(\u^\theta, \d^\theta, P^\theta)$ to pass the limit $\theta\to 0$ to get 
a suitable weak solution. First by direct calculations we can show that 
\begin{equation}
\begin{split}
  &\int_{\R^3}|\Delta\d^\theta-\f(\d^\theta)|^2dx=\int_{\R^3}[|\Delta \d^\theta|^2+|\f(\d^\theta)|^2-2 \Delta\d^\theta \cdot \f(\d^\theta)]dx\\
  &=\int_{\R^3}(|\Delta\d^\theta|^2+|\f(\d^\theta)|^2 +2\nabla\d^\theta: \nabla \f(\d^\theta))dx\\
  &=\int_{\R^3}(|\Delta \d^\theta|^2+|\f(\d^\theta)|^2-2|\nabla\d^\theta|^2+2|\nabla\d^\theta|^2|\d^\theta|^2+4|(\nabla\d^\theta)^T\d^\theta|^2)dx.
  \end{split}
  \label{2.6}
\end{equation}
From \eqref{eqn:approxGlobaleng}, we can obtain that
\begin{equation}
  \sup_{0<\theta<1}\sup_{0<t<T}\int_{\R^3}\left( |\u^\theta|^2+|\nabla\d^\theta|^2 \right)dx+\int_{0}^{T}\int_{\R^3}|\nabla\u^\theta|^2+|\Delta\d^\theta-\f(\d^\theta)|^2dxdt\le C(\u_0, \d_0). 
  \label{2.7}
\end{equation}
Combining \eqref{2.6} and \eqref{2.7},  we get
\begin{equation}
  \begin{split}
   & \int_{0}^{T}\int_{\R^3}|\Delta \d^\theta|^2+|\f(\d^\theta)|^2 dxdt\le \int_{0}^{T}\int_{\R^3}|\Delta \d^\theta-\f(\d^\theta)|^2 dxdt+2\int_{0}^{T}\int_{\R^3}|\nabla\d^\theta|^2 dxdt\\
    &\le \int_{0}^{T}\int_{\R^3}|\Delta\d^\theta-\f(\d^\theta)|^2dxdt+2T\sup_{0<t<T}\int_{\R^3}|\nabla\d^\theta|^2dx\\
    &\le C(\u_0, \d_0, T),
  \end{split}
  \label{2.8}
\end{equation}
From \eqref{2.7} and \eqref{2.8}, we have that $\u^\theta$ is uniformly bounded in $L_t^2 H_x^1({\R^3}\times[0,T])$, $\d^\theta$ is uniformly bounded in $L_t^2 H_x^2({\R^3}\times[0,T])$ for any compact set $K\subset {\R^3}$, and $\nabla\d^\theta$ is uniformly bounded in $L_t^2 H_x^1({\R^3}\times [0,T])$. Therefore, after passing to a subsequence, there exist $\u\in L_t^\infty L_x^2\cap L_t^2 H_x^1({\R^3}\times[0,T])$ and
 $\d\in L_t^\infty H^1_x\cap L_t^2 H_x^2({\R^3}\times[0,T])$ such that 
\begin{equation}
  \left\{
  \begin{array}{ll}
    \u^\theta\rightharpoonup \u & \text{ in }L_t^\infty L_x^2 \cap L_t^2 H_x^1({\R^3}\times[0,T]), \\
    \d^\theta\rightharpoonup \d & \text{ in }L_t^\infty H_x^1 \cap L_t^2 H_x^2({\R^3}\times[0,T]), \\
    \f(\d^\theta)\rightharpoonup \f(\d) & \text{ in }L_t^2 L_x^2 ({\R^3}\times [0,T]).
  \end{array}
  \right.
  \label{}
\end{equation}
By the Sobolev-interpolation inequality, we have that $\nabla\d^\theta\in L_t^{10}L_x^{\frac{30}{13}}, \d^\theta\in L_t^{10}L_x^{10}$, and 
\begin{equation}
  \begin{split}
    &\int_{0}^{T}\left\|\nabla\d^\theta\right\|_{L_x^{\frac{30}{13}}}^{10} dt\le \int_{0}^{T}\left\|\nabla \d^\theta\right\|_{L_x^2}^{8}\left\|\nabla \d^\theta\right\|_{L_x^6}^2dt\le  \left\|\d^\theta\right\|_{L_t^\infty H_x^1}^8 \left\| \d^\theta\right\|_{L_t^2 H_x^2}^2<\infty, \\
    &\int_{0}^{T}\left\|\d^\theta\right\|_{L_x^{10}}^{10} dt\le \int_0^T \left\|\d^{\theta}\right\|_{W_x^{1, \frac{30}{13}}}^{10} dt<\infty.  \end{split}
  \label{eqn:L10interpolation}
\end{equation}
By the lower semicontinuity and \eqref{eqn:approxGlobaleng},  we have, 
for $E(\u, \d)=\int_{\R^3}\frac{1}{2}(|\u|^2+|\nabla\d|^2+F(\d))dx$, that 
\begin{equation}
    E(\u, \d)(t)+\int_{0}^{t}\int_{\R^3}(|\nabla\u|^2+|\Delta\d-\f(\d)|^2)dxdt\le E(\u_0, \d_0)
      \label{}
\end{equation}
holds for a.e.  $0\le t\le T$. 

Now we want to estimate the pressure function $P^\theta$. Taking the divergence of $\u^\theta$ equation in \eqref{eqn:approximate} gives
\begin{equation}
\begin{split}
  -\Delta P^\theta&=\dv^2(\Psi_\theta[\u^\theta]\otimes \u^\theta)+\dv\left( \nabla(\Psi_\theta[\d^\theta])\cdot (\Delta\d^\theta-\f(\d^\theta)) \right)\\
  &\ \ \ +\dv^2\left[ S_\alpha[\Delta\d^\theta-\f(\d^\theta), \Psi_\theta[\d^\theta]] \right], \quad 
  \text{ in }{\R^3}.
  \end{split}
  \label{eqn:approxPeq}
\end{equation}
For $P^\theta$, we claim that $P^\theta$ in $L^{\frac{5}{3}}({\R^3}\times [0,T])$ and 
\begin{equation*}
  \left\|P^\theta\right\|_{L^{\frac{5}{3}}({\R^3}\times[0,T])}\le C(\left\|u_0\right\|_{L^2({\R^3})}, \left\|\d_0\right\|_{H^1({\R^3})}, T), \ \forall \theta\in(0,1].
\end{equation*}
In fact, by Calderon-Zgymund's $L^p$-theory, we have
\begin{align*}
  \left\|P^\theta\right\|_{L^{\frac{5}{3}}({\R^3}\times[0,T])}&\le C
  \Big[ \left\|\Psi_{\theta}[\u^\theta]\otimes \u^\theta\right\|_{L_t^{\frac{5}{3}}L_x^{\frac{5}{3}}}+\left\|\nabla(\Psi_\theta[\d^\theta])\cdot (\Delta\d^\theta-\f(\d^\theta))\right\|_{L_t^{\frac{5}{3}}L_x^{\frac{15}{14}}}\\&\qquad\qquad+ \left\||\Psi_\theta[\d^\theta]||\Delta\d^\theta-\f(\d^\theta)|\right\|_{L_t^{\frac{5}{3}}L_x^{\frac{5}{3}}}\Big]\\
 &\le C \Big[ \left\|\u^\theta\right\|_{L_t^{\frac{10}{3}}L_x^{\frac{10}{3}}}^2 + \left\|\nabla\d^\theta\right\|_{ L_t^{10} L_x^{\frac{30}{13}} }  \left\|\Delta\d^\theta-\f(\d^\theta)\right\|_{L_t^2 L_x^2}\\
 &\qquad\qquad+
 \left\|\d^\theta\right\|_{L_t^{10} L_x^{10}}\left\|\Delta\d^\theta-\f(\d^\theta)\right\|_{L_t^2 L_x^2}\Big]
 \\
 &\le C(\left\|\u\right\|_{L_t^\infty L_x^2\cap L_t^2 H_x^1({\R^3}\times[0,T])}, \left\|\d\right\|_{L_t^\infty H_x^1 \cap L_t^2 H_x^2({\R^3}\times [0,T])})\\
 &\le C(\left\|\u_0\right\|_{L^2({\R^3})}, \left\|\d_0\right\|_{H^1({\R^3})}, T).
\end{align*}
This uniform estimate implies that there exists $P\in L^{\frac{5}{3}}({\R^3}\times [0,T])$ such that as $\theta\to 0$, 
\begin{equation}
  P^\theta\rightharpoonup P \text{ in }L^{\frac{5}{3}}({\R^3}\times[0,T]). 
  \label{eqn:weakconvP}
\end{equation}
Recalling the $\u^\theta$ equation, we get 
\begin{equation*}
  \begin{split}
    \pa_t \u^\theta&=-\Psi_\theta[\u^\theta]\cdot \nabla\u^\theta-\nabla P^\theta+\Delta\u^\theta-\nabla(\Psi_\theta[\d^\theta])\cdot (\Delta \d^\theta-\f(\d^\theta))\\
    &\qquad-\nabla\cdot S_\alpha[\Delta\d-\f(\d^\theta), \Psi_\theta[\d^\theta]]\\
    &\in L^{\frac{5}{4}}({\R^3}\times[0,T])+L^{\frac{5}{3}}([0,T], W^{-1, \frac{5}{3}}({\R^3}))+ \bigcap_{R>0}L^2([0,T], W^{-1, \frac{3}{2}}(B_R)), 
  \end{split}
\end{equation*}
and
\begin{equation*}
\begin{split}
  \sup_{0<\theta<1}\left\|\pa_t \u^\theta\right\|_{L^{\frac{5}{4}}({\R^3}\times[0,T])+L^{\frac{5}{3}}([0,T], W^{-1, \frac{5}{3}}({\R^3}))+L^2([0,T], W^{-1, \frac{3}{2}}(B_R))}\\
  \le C(R,T, \left\|\u_0\right\|_{L^2({\R^3})}, \left\|\d_0\right\|_{H^1({\R^3})}).
  \end{split}
\end{equation*}
Similarly, we can show
\begin{equation*}
  \pa_t \d^\theta\in L^{\frac{5}{3}}({\R^3}\times[0,T])+\bigcap_{R>0}L^2([0,T], L^{\frac{4}{3}}(B_R)),
\end{equation*}
and
\begin{equation*}
  \left\|\pa_t \d^\theta\right\|_{L^{\frac{5}{3}}({\R^3}\times[0,T])+\bigcap_{R>0}L^2([0,T], L^{\frac{3}{2}}(B_R))({\R^3}\times[0,T])}\le C(R, T, \left\|\u_0\right\|_{L^2({\R^3})}, \left\|\d_0\right\|_{H^1({\R^3})}).
\end{equation*}
Hence by the Sobolev embedding and Aubin--Lions' compactness Lemma, we can conclude that as $\theta\to 0$,
\begin{equation}
  \left\{
  \begin{array}{ll}
    \u^\theta\to \u & \text{ in }L^{p_1}({\R^3}\times[0,T]), 1<p_1<\frac{10}{3}, \\
    \nabla\u^\theta\rightharpoonup \nabla \u & \text{ in }L^2({\R^3}\times[0,T]), \\
    \d^\theta\to \d & \text{ in }L^{p_2}({\R^3}\times[0,T]), 1<p_2<10, \\
    \nabla\d^\theta\to \nabla \d & \text{ in }L^{p_1}({\R^3}\times[0,T]), 1<p_1<\frac{10}{3}, \\
    \nabla^2\d^\theta \rightharpoonup \nabla^2 \d & \text{ in }L^2({\R^3}\times[0,T]).
  \end{array}
  \right.
  \label{eqn:approximateconv}
\end{equation}
Furthermore, $(\u^\theta, \d^\theta, P^\theta)$ satisfies the local energy inequality. In fact, if we multiply the $\u^\theta$ equation in \eqref{eqn:approximate} by $\u^\theta\p$,  take derivative of the $\d^\theta$ equation
in \eqref{eqn:approximate} and multiply by $\nabla\d^\theta \p$,  multiply the $\d^\theta$ equation in \eqref{eqn:approximate} by $\f(\d^\theta)$, and perform calculations similar to the previous section,
we can get
	\begin{equation}
	\begin{split}
&\frac{d}{dt}\int_{\R^3}\big[\frac{1}{2}\left( |\u^\theta|^2+|\nabla\d^\theta|^2 \right)+F(\d^\theta)\big]\p dx
+\int_{\R^3}\left( |\nabla\u^\theta|^2+|\Delta\d^\theta|^2+|\f(\d^\theta)|^2 \right)\p dx\\
&= \int_{\R^3}[\frac{1}{2}(|\u^\theta|^2+|\nabla\d^\theta|^2)(\pa_t\p+\Delta\p)+F(\d^\theta)\pa_t\p] dx\\
&+\int_{\R^3}\big[\frac{1}{2} |\u^\theta|^2 \Psi_{\theta}[\u^\theta]\cdot \nabla\p+P^\theta \u^\theta\cdot\nabla\p+\nabla\Psi_{\theta}[\d^\theta]\odot\nabla\d^\theta:\u^\theta\otimes\nabla\p\big]dx\\
&+\int_{\R^3}\left( \nabla\d^\theta\odot\nabla\d^\theta-|\nabla\d^\theta|^2 I_3 \right):\nabla^2\p dx\\
&+\int_{\R^3}S_\alpha[\Delta\d^\theta-\f(\d^\theta),\Psi_\theta[\d^\theta]]:(\u^\theta\otimes\nabla\p) dx
+\int_{\R^3}T_\alpha[\nabla\u^\theta, \Psi_\theta[\d^\theta]]\cdot(\nabla\p\cdot \nabla\d^\theta)dx\\
&-\int_{\R^3}\f(\d^\theta)\cdot(\nabla\p\cdot\nabla\d^\theta)dx
-2\int_{\R^3}\nabla\f(\d^\theta):\nabla\d^\theta \p dx.
\end{split}
\label{eqn:approxlocalenergy}
\end{equation}
With the convergence  \eqref{eqn:weakconvP}, \eqref{eqn:approximateconv}, it is easy to check that the limit $(\u, \d)$ is a weak solution to \eqref{eqn:ELmodel1} and \eqref{eqn:initial}. Taking the limit in \eqref{eqn:approxlocalenergy} as $\theta\to 0$, by the lower semicontinuity we obtain
\begin{equation}
\begin{split}
 & \int_{\R^3}\left[ \frac{1}{2}(|\u|^2+|\nabla\d|^2)+F(\d) \right]\p (x, t)dx
  +\int_{0}^{t}\int_{\R^3}(|\nabla\u|^2+|\Delta\d|^2+|\f(\d)|^2)\p dxds\\
  &\le\liminf_{\theta\to0}\int_{\R^3}\left[ \frac{1}{2}(|\u^\theta|^2+|\nabla\d^\theta|^2)+F(\d^\theta) \right]\p (x, t)dx\\
 &\ \ \  +\int_{0}^{t}\int_{\R^3}(|\nabla\u^\theta|^2+|\Delta\d^\theta|^2+|\f(\d^\theta)|^2)\p dxds\Big].
  \end{split}
\label{}
\end{equation}
While 
\begin{equation}
  \begin{split}
    &\lim_{\theta\to 0} \text{R.H.S. of }\eqref{eqn:approxlocalenergy}\\
    &=\int_{\R^3}\frac{1}{2}(|\u|^2+|\nabla\d|^2)(\pa_t\p+\Delta\p)+F(\d)\pa_t\p dx\\
    &+\int_{\R^3}\left[\frac{1}{2}\left( |\u|^2+2P \right)\u\cdot \nabla\p+\nabla\d\odot\nabla\d:\u\otimes\nabla\p\right]dx\\
    &+\int_{\R^3}\left( \nabla\d\odot\nabla\d-|\nabla\d|^2 I_3 \right):\nabla^2\p dx\\
    &+\int_{\R^3}S_\alpha[\Delta\d-\f(\d),\d]:(\u\otimes\nabla\p) dx\\
    &+\int_{\R^3}T_\alpha[\nabla\u,\d]\cdot(\nabla\p\cdot \nabla\d) dx\\
    &-\int_{\R^3}\f(\d)\cdot(\nabla\p\cdot\nabla\d)dx
    -2\int_{\R^3}\nabla\f(\d):\nabla\d \p dx
  \end{split}
  \label{}
\end{equation}
Putting all those together we show that the local energy inequality \eqref{eqn:localeng} holds. Therefore 
$(\u, \d, P)$ is a suitable weak solution to \eqref{eqn:ELmodel1} and \eqref{eqn:initial}.

\section{$\varepsilon_0$-Regularity criteria}
In this section we will establish the partial regularity for suitable weak solutions $(\u, \d, P)$ of \eqref{eqn:ELmodel1} in ${\R^3}\times(0, \infty)$. The argument is based on a blowing up argument, motivated by that of Lin \cite{lin1998newproof} on the Navier--Stokes equation. Recently, this type of argument
has been employed by Du--Hu--Wang \cite{du2019suitable} for the partial regularity in the co-rotational Beris--Edwards system in dimension three. 
However, the kinematic transport effects in \eqref{eqn:ELmodel1} destroy the maximum principle  
for $\d$, which is necessary to apply the argument by \cite{lin1998newproof} and \cite{du2019suitable}. 
To overcome this new difficulty, we adapt some ideas from
Giaquinta--Giusti \cite{GiaquintaGiusti1973} to control the mean oscillation of $\d$
in $L^{6}$. More precisely, we have
\begin{lemma}
  For any $M>0$, there exist $\varepsilon_0=\varepsilon_0(M)>0$, $0<\tau_0(M)<\frac{1}{2}$, and $C_0=C_0(M)>0$,  such that if $(\u, \d, P)$ is a suitable weak solution  of \eqref{eqn:ELmodel1} in ${\R^3}\times(0,\infty)$, which satisfies, for $z_0=(x_0, t_0)\in {\R^3}\times(r^2, \infty)$ and $r>0$, 
  \begin{equation}
    |\d_{z_0, r}|:=\big|\fint_{\PP_r(z_0)}\d dxdt\big|\le M, 
    \label{}
  \end{equation}
  and
  \begin{equation}
  \begin{split}
    \Phi(z_0, r):&= r^{-2}\int_{\PP_{r}(z_0)}\left( |\u|^{3}+|\nabla\d|^{3} \right)dxdt
    +\Big( r^{-3}\int_{\PP_r(z_0)}|P|^{\frac{3}{2}}dxdt \Big)^{2}\\
    &\ \ \ \ +\Big(\fint_{\PP_{r}(z_0)}|\d-\d_{z_0, r}|^{6} dxdt\Big)^{\frac{1}{2}}\le \varepsilon_0^{3},
   \end{split}
    \label{}
  \end{equation}
  then 
  \begin{equation}
      \Phi(z_0, \tau_0 r)\le \frac{1}{2}\max\Big\{ \Phi(z_0, r), C_0r^{3} \Big\}.
    \label{}
  \end{equation}
  \label{lemma:smallregularity}
\end{lemma}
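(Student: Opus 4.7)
The plan is a blow-up / compactness argument in the spirit of Lin's proof for Navier--Stokes \cite{lin1998newproof} and its adaptation by Du--Hu--Wang \cite{du2019suitable} for the co-rotational Beris--Edwards system. The $L^{6}$ mean oscillation of $\d$ in $\Phi$, borrowed from Giaquinta--Giusti \cite{GiaquintaGiusti1973}, is exactly what substitutes for the $L^\infty$ bound that is lost to the stretching term $T_\alpha$ when $\alpha\neq\frac{1}{2}$.

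Argue by contradiction: fix $M>0$ and suppose no $(\varepsilon_0,\tau_0,C_0)$ works. Then there is a sequence of suitable weak solutions $(\u_n,\d_n,P_n)$, base points $z_n=(x_n,t_n)$ and radii $r_n>0$ (WLOG $r_n\le 1$, since large $r_n$ is absorbed by choosing $C_0$ large) such that $|(\d_n)_{z_n,r_n}|\le M$, $\Phi_n(z_n,r_n)=:\varepsilon_n^3\to 0$, yet $\Phi_n(z_n,\tau_0 r_n)>\frac{1}{2}\max\{\varepsilon_n^3,C_0 r_n^3\}$ for every candidate pair $(\tau_0,C_0)$ to be fixed at the end. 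Perform the parabolic blow-up
\[
\widetilde{\u}_n(y,s):=\frac{r_n}{\varepsilon_n}\u_n(x_n+r_n y,\,t_n+r_n^2 s),\quad \widetilde{\d}_n:=\frac{\d_n-(\d_n)_{z_n,r_n}}{\varepsilon_n},\quad \widetilde{P}_n:=\frac{r_n^2}{\varepsilon_n}P_n,
\]
which transports the problem to $\PP_1(0)$ and produces a rescaled Ericksen--Leslie-type system in which every genuinely nonlinear term ($\widetilde{\u}_n\cdot\nabla\widetilde{\u}_n$, $\widetilde{\u}_n\cdot\nabla\widetilde{\d}_n$, $\nabla\widetilde{\d}_n\cdot\Delta\widetilde{\d}_n$, and the quadratic parts of $T_\alpha$ and $\nabla\cdot S_\alpha$) carries an extra factor of $\varepsilon_n$, while the surviving leading parts of $T_\alpha$ and $\nabla\cdot S_\alpha$ are the linearizations about the constant state $(\d_n)_{z_n,r_n}$. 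The hypothesis gives uniform bounds
\[
\int_{\PP_1}|\widetilde{\u}_n|^3 + \int_{\PP_1}|\nabla\widetilde{\d}_n|^3 + \Big(\fint_{\PP_1}|\widetilde{\d}_n|^6\Big)^{1/2}\le 3, \qquad \Big(\int_{\PP_1}|\widetilde{P}_n|^{3/2}\Big)^2 \le r_n^2,
\]
so that the rescaled analogue $\widetilde{\Phi}_n(0,1)$ is bounded.

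Combining these with the rescaled version of the local energy inequality from Lemma \ref{lemma:localeg} and Aubin--Lions compactness, extract a subsequence with $\widetilde{\u}_n,\widetilde{\d}_n,\nabla\widetilde{\d}_n$ converging strongly in $L^p_{\text{loc}}$ for appropriate $p$, $\widetilde{P}_n\rightharpoonup \widetilde{P}_\infty$ weakly in $L^{3/2}$, $(\d_n)_{z_n,r_n}\to \d_*\in\R^3$ with $|\d_*|\le M$, and $r_n\to r_*\in[0,1]$. Passing to the limit in the rescaled system the nonlinear terms drop out and the limit $(\widetilde{\u}_\infty,\widetilde{\d}_\infty,\widetilde{P}_\infty)$ satisfies a linear constant-coefficient Stokes--heat system of the schematic form
\begin{align*}
\partial_s\widetilde{\u}_\infty+\nabla\widetilde{P}_\infty&=\Delta\widetilde{\u}_\infty-\nabla\cdot S_\alpha[\Delta\widetilde{\d}_\infty,\d_*]+L_1(\nabla\widetilde{\d}_\infty),\\
\nabla\cdot\widetilde{\u}_\infty&=0,\\
\partial_s\widetilde{\d}_\infty-T_\alpha[\nabla\widetilde{\u}_\infty,\d_*]&=\Delta\widetilde{\d}_\infty+L_2(\widetilde{\d}_\infty)+G_\infty,
\end{align*}
where $L_1,L_2$ are linear with constant coefficients depending on $\d_*,r_*,\f(\d_*)$, and $G_\infty$ is a $y$-constant vector arising from the limit of $-(r_n^2/\varepsilon_n)\f((\d_n)_{z_n,r_n})$. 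The constant $G_\infty$ is harmless because it leaves both $\nabla\widetilde{\d}_n$ and the oscillation $\widetilde{\d}_n-(\widetilde{\d}_n)_{0,\rho}$ untouched. Standard interior $C^\infty$ regularity and Campanato-type decay for this linear system yield
\[
\widetilde{\Phi}_\infty(\tau)\le C(M)\tau^\mu\widetilde{\Phi}_\infty(1)\le C(M)\tau^\mu \quad \text{for all } \tau\in(0,1),
\]
with $\mu\ge 3$.

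Finally, choose $\tau_0=\tau_0(M)\in(0,1/2)$ so that $C(M)\tau_0^\mu\le\frac{1}{8}$. The convergences together with weak lower semicontinuity on the $|\widetilde{P}|^{3/2}$ piece give $\widetilde{\Phi}_n(\tau_0)\le\frac{1}{4}$ for $n$ large. Unwinding the rescaling and collecting the non-scale-invariant pressure contribution and the Ginzburg--Landau error into one $O(r_n^3)$ term produces
\[
\Phi_n(z_n,\tau_0 r_n)\le\frac{1}{2}\varepsilon_n^3+C r_n^3,
\]
and choosing $C_0:=2C$ contradicts the standing assumption. The main technical obstacles are: (i) the $(r^{-3}\int|P|^{3/2})^2$ piece of $\Phi$ is not parabolically scale-invariant, and the $C_0 r^3$ correction in the conclusion is precisely calibrated to absorb the resulting $r_n$-errors; (ii) the Ginzburg--Landau forcing $(r_n^2/\varepsilon_n)\f((\d_n)_{z_n,r_n})$ in the rescaled $\widetilde{\d}$-equation need not be small, but is spatially constant and hence invisible to the gradient and mean-oscillation pieces of $\widetilde{\Phi}_n$; and (iii) securing the strong $L^p_{\text{loc}}$ convergence of $\nabla\widetilde{\d}_n$ needed to pass to the limit in the linearized $\nabla\cdot S_\alpha$ and $T_\alpha$, which follows from Aubin--Lions applied to the rescaled local energy inequality together with the a priori $L^\infty_t H^1_x\cap L^2_t H^2_x$ bound on $\d$.
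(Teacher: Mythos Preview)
Your overall architecture---blow-up, compactness, linear limit system, decay for the limit---is the same as the paper's. However, two steps in your execution do not go through as written.

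\medskip
\textbf{(a) You never force $r_n/\varepsilon_n\to 0$.} In the paper, the contradiction is set up with $C_i\to\infty$, and the trivial bound $\Phi(z_i,\tau r_i)\le C(\tau)\Phi(z_i,r_i)=C(\tau)\varepsilon_i^3$ combined with $\Phi(z_i,\tau r_i)\ge \tfrac12 C_i r_i^3$ gives $r_i^3\lesssim \varepsilon_i^3/C_i$, hence $r_i/\varepsilon_i\to 0$. This is what kills \emph{all} Ginzburg--Landau contributions $(r_i^2/\varepsilon_i)\f(\d_i)$ in the rescaled system and produces the clean linear limit \eqref{eqn:blowupeq}. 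In your setup you allow $r_n\to r_*\in[0,1]$; if $r_*>0$ the ``constant'' forcing $(r_n^2/\varepsilon_n)\f\big((\d_n)_{z_n,r_n}\big)$ has modulus $\sim r_*^2|\f(\d_*)|/\varepsilon_n\to\infty$, so $\partial_t\widetilde{\d}_n$ blows up and you cannot extract a limit equation at all. Saying the constant is ``invisible to gradient and mean-oscillation pieces'' does not rescue the compactness step. Your $L_1,L_2,G_\infty$ are artifacts of this; in the paper's limit they are simply absent.

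\medskip
\textbf{(b) The pressure upper bound cannot come from weak lower semicontinuity.} You need $\limsup_n\int_{\PP_{\tau_0}}|\widetilde P_n|^{3/2}$ small; weak convergence only gives $\int|\widetilde P_\infty|^{3/2}\le\liminf\int|\widetilde P_n|^{3/2}$, which is the wrong direction. The paper treats this as the main technical obstacle: decompose $\widehat P_i=\widehat P_i^{(1)}+\widehat P_i^{(2)}$ with $\widehat P_i^{(2)}$ harmonic and $\widehat P_i^{(1)}$ a singular integral of (among other things) $S_\alpha[\Delta\widehat{\d}_i,\d_i]-S_\alpha[\Delta\widehat{\d},\overline{\d}]$. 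Since $\|\d_i\|_{L^6}$ is merely bounded, smallness of this difference in $L^{3/2}$ requires \emph{strong} convergence $\Delta\widehat{\d}_i\to\Delta\widehat{\d}$ in $L^2(\PP_{3/8})$. The paper obtains this by writing a local energy inequality for the differences $(\widehat{\u}_i-\widehat{\u},\,\widehat{\d}_i-\widehat{\d})$ and exploiting the cancellation $\int S_\alpha[\cdot,\cdot]:\nabla\u=\int T_\alpha[\nabla\u,\cdot]\cdot(\cdot)$; see the passage around \eqref{eqn:stongL2conv}--\eqref{eqn:keycancel}. Your proposal stops at strong convergence of $\nabla\widetilde{\d}_n$, which is one derivative short.

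\medskip
A minor point: the regularity and decay for the limit linear system is not quite ``standard'', because the $\widehat{\u}$-equation carries $\nabla\cdot S_\alpha[\Delta\widehat{\d},\overline{\d}]$, a third-order term in $\widehat{\d}$. The paper devotes Lemma~\ref{lemma:blowupregularity} to this, again via higher-order energy estimates relying on the same $S_\alpha$/$T_\alpha$ cancellation and a further pressure splitting.
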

\begin{proof}
  We prove it by contradiction. Suppose that the conclusion were false. Then there exists $M_0>0$ such that for any $\tau\in(0, \frac{1}{2})$, there exists
  $\varepsilon_i\to 0, C_i\to \infty$, and $r_i>0$, and $z_i=(x_i, t_i)\in{\R^3}\times(r_i^2, \infty)$ such that 
  \begin{equation}
    |\d_{z_i, r_i}|\le M_0,
    \label{4.4}
  \end{equation}
  and 
  \begin{equation}
    \begin{split}
      \Phi(z_i, r_i)&=\varepsilon_i^{3}, 
    \end{split}
    \label{4.5}
  \end{equation}
  but
  \begin{equation}
    \begin{split}
      \Phi(z_i, \tau r_i)&\ge \frac{1}{2}\max\left\{ \varepsilon_i^{3}, C_i r_i^{3} \right\}, 
    \end{split}
    \label{3.7}
  \end{equation}
  Notice that 
  \begin{equation*}
  \begin{split}
   & (\tau r_i)^{-2}\int_{\PP_{\tau r_i}(z_i)}\left( |\u|^{3}+|\nabla\d|^{3} \right)dxdt
    +\Big( (\tau r_i)^{-2}\int_{\PP_{\tau r_i}(z_i)}|P|^{\frac{3}{2}}dxdt \Big)^{2}\\
   & \le \tau^{-4}\Big( r_i^{-2}\int_{\PP_{r_i}(z_i)} \left( |\u|^{3}+|\nabla\d|^{3} \right)dxdt+\big( r_i^{-2}\int_{\PP_{r_i}(z_i)} |P|^{\frac{3}{2}}dxdt\big)^{2}\Big),
   \end{split}
    \label{}
  \end{equation*}
  \begin{equation*}
  \begin{split}
   & \Big( \fint_{\PP_{\tau r_i}(z_i)}|\d-\d_{z_i, \tau r_i}|^{6} dxdt\Big)^{\frac{1}{2}}
    \le\Big( 2^{5} |\d_{z_i,\tau r_i}-\d_{z_i, r_i}|^{6}+ 2^{5}\fint_{\tau r_i (z_i)}|\d-\d_{z_i, r_i}|^{6}dxdt \Big)^{\frac{1}{2}}\\
    &=\Big( 2^{5} \Big|\fint_{\PP_{\tau r_i}(z_i)}(\d-\d_{z_i, r_i})dxdt\Big|^{6}+2^{5}\fint_{\tau r_i (z_i)}|\d-\d_{z_i, r_i}|^{6}dxdt \Big)^{\frac{1}{2}}\\
    &\le\Big( 2^{6}\fint_{\PP_{\tau r_i}(z_i)}|\d-\d_{z_i, r_i}|^{6} dxdt \Big)^{\frac{1}{2}}
    \le 2^{3}\tau^{-{\frac{5}{2}}}\Big( \fint_{\PP_{r_i(z_i)}}|\d-\d_{z_i, r_i}|^{6} \Big)^{\frac{1}{2}}.
    \end{split}
  \end{equation*}
  From \eqref{3.7}, we see that 
  \begin{align*}
    C_i r_i^{3}&\le 2\Phi(z_i, \tau r_i)\le 2\max\left\{ \tau^{-4}, 2^{3}\tau^{-{\frac{5}{2}}} \right\}\Phi(z_i, r_i)\\
    &=2\max\left\{ \tau^{-4}, 2^{3}\tau^{-{\frac{5}{2}}} \right\}\varepsilon_i^{3}
  \end{align*}
  so that 
  \begin{equation}
    r_i\le \Big( \frac{\varepsilon_i^{3}}{2C_i \max\big\{ \tau^{-4}, 2^{3}\tau^{-{\frac{5}{2}}} \big\} } \Big)^{\frac{1}{3}}\to 0. 
    \label{eqn:riestimate}
  \end{equation}
  Define the blowing-up sequence 
  \begin{equation*}
    (\u_i, \d_i, P_i):=\left( r_i \u, \d, r_i^2 P \right)(x_i+r_i x, t_i+r_i^2 t), \forall x\in {\R^3}, t>-1, 
  \end{equation*}
  and 
  \begin{equation*}
    (\hu_i, \hd_i, \hP_i)(z):=\left( \frac{\u_i}{\varepsilon_i}, \frac{\d_i-\overline{\d_i}}{\varepsilon_i}, \frac{P_i}{\varepsilon_i}  \right)(z), \forall z(x, t)\in \PP_1(0), 
  \end{equation*}
  where 
  \begin{equation*}
    \overline{\d}_i=\fint_{\PP_1(0)}\d_i dxdt.
  \end{equation*}
  Then $(\hu_i, \hd_i, \hP_i)$ satisfies
  \begin{equation}
    \left\{
    \begin{array}{l}
      \fint_{\PP_1(0)}\hd_i dxdt=0,\quad  |\od_i|=|\d_{z_i, r_i}|\le M_0,  \\
      \int_{\PP_1(0)}\left( |\hu_i|^{3}+|\nabla\hd_i|^{3} \right)dxdt+\left( \int_{\PP_1(0)}|\hP_i|^{\frac{3}{2}}dxdt \right)^{2}\\
      +\left(\fint_{\PP_1(0)}|\hd_i|^{6} dxdt\right)^{\frac{1}{2}}=1,  \\
      \tau^{-2}\int_{\PP_\tau(0)}\left( |\hu_i|^{3}+|\nabla\hd_i|^{3} \right)dxdt+\left( \tau^{-2}\int_{\PP_\tau(0)}|\hP_i|^{\frac{3}{2}}dxdt \right)^{2} 
       \\+\left(\fint_{\PP_\tau(0)}|\hd_i-(\hd_i)_{0,\tau}|^{6} dxdt\right)^{\frac{1}{2}}
      \ge \frac{1}{2}\max\Big\{ 1, C_i\big(\frac{r_i}{\varepsilon_i} \big)^{3}\Big\}.
    \end{array}
    \right.
    \label{3.8}
  \end{equation}
It follows from \eqref{4.4}, \eqref{4.5} that
  \begin{equation}
    \left\{
    \begin{array}{l}
      \fint_{\PP_1(0)}|\d_i|^{6}dxdt \le C\big( \fint_{\PP_1(0)}|\d_i-\overline{\d_i}|^{6}dxdt+|\overline{\d_i}|^{6} \big)\le C\big( \varepsilon_i^{6}+M_0^{6} \big),  \\ 
      \fint_{\PP_1(0)}F(\d_i)^{\frac{3}{2}}dxdt\le C\fint_{\PP_1(0)}||\d_i|^2-1|^{3}dxdt
      \le C\big( \varepsilon_i^{6}+M_0^{6}+1 \big), \\
      \fint_{\PP_1(0)}|\f(\d_i)|^{2} dxdt\le C\left(\fint_{\PP_1(0)}|\d_i|^{6} dxdt+1\right)\le C(\varepsilon_i^{6}+M_0^{6}+1),\\
      \fint_{\PP_1(0)}|\pa_{\d}\f(\d_i)|^{3} dxdt\le C\left(\fint_{\PP_1(0)}|\d_i|^{6} dxdt+1\right)\le C(\varepsilon_i^{6}+M_0^{6}+1).
    \end{array}
    \right.
    \label{eqn:Fdiestimate}
  \end{equation}
Furthermore, $(\hu_i, \hd_i, \hP_i)$ is a suitable weak solution of the blowing-up version of \eqref{eqn:ELmodel1}:
 \begin{equation}
   \left\{
   \begin{array}{l}
     \pa_t \hu_i+\varepsilon_i \hu_i\cdot \nabla\hu_i+\nabla\hP_i\\
     =\Delta\hu_i-\varepsilon_i \nabla\hd_i\cdot \Delta \hd_i+\frac{r_i^2}{\varepsilon_i}\nabla\d_i\cdot \f(\d_i)
      -\nabla\cdot S_\alpha[\Delta\hd_i-\frac{r_i^2}{\varepsilon_i}\f(\d_i),\d_i], \\
     \dv \hu_i=0, \\
     \pa_t\hd_i+\varepsilon_i \hu_i\cdot \nabla\hd_i-T_\alpha[\nabla\hu_i,\d_i]=\Delta\hd_i-\frac{r_i^2}{\varepsilon_i}\f(\d_i). 
   \end{array}
   \right.
   \label{eqn:scaledELsyst}
 \end{equation}
 From \eqref{3.8}, we assume that there exists
 \begin{equation}
   (\hu, \hd, \hP)\in L^{3}(\PP_1(0))\times L_t^{3} W^{1, 3}_x(\PP_1(0))\times L^{\frac{3}{2}}(\PP_1(0))
 \label{}
 \end{equation}
such that, after passing to a subsequence, 
\begin{equation*}
  (\hu_i, \hd_i, \hP_i)\rightharpoonup (\hu, \hd, \hP) \text{ in }L^{3}(\PP_1(0))\times L_t^{3} W_x^{1, 3}(\PP_1(0))\times L^{\frac{3}{2}}(\PP_1(0)).
\end{equation*}
It follows from \eqref{3.8} and the lower semicontinuity that 
\begin{equation}
  \int_{\PP_1(0)}\big( |\hu|^{3}+|\nabla\hd|^{3} \big)dxdt+\left( \int_{\PP_1(0)}|\hP|^{\frac{3}{2}}dxdt \right)^{2}+\left(\fint_{\PP_1(0)}|\hd|^{6} dxdt\right)^{\frac{1}{2}}\le 1.
  \label{eqn:blowupbound}
\end{equation}
We claim that 
\begin{equation}
  \left\|\hu_i\right\|_{L_t^\infty L_x^2\cap L_t^2 H_x^1(\PP_{\frac{1}{2}}(0))}+\left\|\hd_i\right\|_{L_t^\infty H_x^1\cap L_t^2 H_x^2(\PP_{\frac{1}{2}}(0))}\le C<\infty.
  \label{eqn:huhdbound}
\end{equation}
In fact, if we choose a cut-off function $\p \in C_0^\infty(\PP_1(0))$ such that 
\begin{equation*}
  0\le \p\le 1, \p \equiv 1 \text{ on }\PP_{\frac{1}{2}}(0), \text{ and }|\pa_t \p|+|\nabla\p|+|\nabla^2\p|\le C.
\end{equation*}
Define
\begin{equation*}
  \p_i(x, t):=\p\left( \frac{x-x_i}{r_i}, \frac{t-t_i}{r_i^2} \right), \forall (x, t)\in {\R^3}\times(0, \infty). 
\end{equation*}
Replacing $\p$ by $\p_i^2$ in \eqref{eqn:localeng}, by Young's inequality we can show
\begin{equation}
  \begin{split}
    &\sup_{t_i-\frac{r_i^2}{4}\le t\le t_i}\int_{B_{r_i}(x_i)}\left( |\u|^2+|\nabla\d|^2
     +F(\d)\right)\p_i^2 dx\\
     &+\int_{\PP_{r_i}(z_i)}\left( |\nabla\u|^2+|\Delta \d|^2+|\f(\d)|^2 \right)\p_i^2 dxdt\\
    &\le C\Bigg[\int_{\PP_{r_i}(z_i)}(|\u|^2+|\nabla\d|^2)|(\pa_t+\Delta)\p_i^2|+F(\d)|\pa_t\p_i^2| dxdt\\
    &\quad +\int_{\PP_{r_i}(z_i)}(|\u|^2+|\nabla\d|^2+|P|)|\u||\nabla\p_i^2|dxdt\\
    &+\int_{\PP_{ r_i}(z_i)}|\d|^2|\u|^2|\nabla\p_i|^2+|\d|^2|\nabla\d|^2|\nabla\p_i|^2 dxdt\\
    &\quad +\int_{\PP_{r_i}(z_i)}|\nabla\d|^2 (|\nabla^2(\p_i^2)|+|\nabla\p_i|^2)+|\pa_\d \f(\d)||\nabla\d|^2 \p_i^2
    dxdt\Bigg].
  \end{split}
  \label{4.14}
\end{equation}
By rescaling and using the estimates   \eqref{eqn:riestimate}, \eqref{3.8}, and \eqref{eqn:Fdiestimate},
we can show that  
\begin{equation}
  \begin{split}
    &\sup_{-\frac{1}{4}\le t\le 0}\int_{B_{\frac{1}{2}}(0)}\left( |\hu_i|^2+|\nabla\hd_i|^2 \right)dx+\int_{\PP_{\frac{1}{2}(0)}}\left(|\nabla\hu_i|^2+|\nabla^2 \hd_i|^2\right)dxdt\\
 & \le C\int_{\PP_1(0)}\left[ \left( |\hu_i|^2+|\nabla\hd_i|^2 \right)+\frac{r_i^2}{\varepsilon_i^2} F(\d_i) \right] dxdt\\
  &\quad +C\varepsilon_i\int_{\PP_1(0)}(|\hu_i|^2+|\nabla\hd_i|^2+|\hP_i|)|\hu_i| dxdt\\
  &\quad +C\int_{\PP_1(0)}(|\d_i|^2|\hu_i|^2 +|\d_i|^2|\nabla\hd_i|^2) dxdt\\
  &\quad +C\int_{\PP_1(0)}|\nabla\hd_i|^2+r_i^2|\nabla\hd_i|^2 |\pa_\d \f(\d_i)| dxdt\\
  &\le C.
  \end{split} 
  \label{}
\end{equation}
This yields \eqref{eqn:huhdbound}. Hence we may assume that 
\begin{equation}
  (\hu_i, \hd_i)\rightharpoonup (\hu, \hd) \text{ in }L_t^2 H_x^1(\PP_{\frac{1}{2}}(0))\times L_t^2 H_x^2(\PP_{\frac{1}{2}}(0)).
  \label{eqn:udweakconv}
\end{equation}
From $\frac{r_i}{\varepsilon_i}\to 0$ and $\big|\fint_{\PP_1(0)}\d_idxdt\big|\le M_0$, we have
\begin{align*}
  \big|\fint_{\PP_{\frac{1}{2}}(0)}\d_i dxdt\big|&\le \big|\fint_{\PP_{\frac{1}{2}}(0)}\big(\d_i-\fint_{\PP_1(0)}\d_i\big)dxdt\big|+\big|\fint_{\PP_1(0)}\d_idxdt\big|\\
  &\le C \big(\fint_{\PP_1(0)}|\d_i-\od_i|^{6}dxdt\big)^{\frac{1}{6}}+M_0
  \le C\varepsilon_i+M_0\le C.
\end{align*}
Thus by the same interpolation as in \eqref{eqn:L10interpolation}, we have
\begin{align*}
	\|\d_i\|_{L^{10}(\PP_{\frac{1}{2}}(0))}&\le C, \\
	\fint_{\PP_{\frac{1}{2}}(0)}|\f(\d_i)|^{\frac{10}{3}} dxdt&\le C,\\ \fint_{\PP_{\frac{1}{2}}(0)}|\d_i\otimes\f(\d_i)|^{\frac{5}{2}} dxdt &\le C\\
\fint_{\PP_{\frac{1}{2}}(0)}	F(\d_i)^{\frac{5}{2}} dxdt&\le C, 
\end{align*}
 and there exists a constant $\od\in {\R^3}$, with $|\od|\le M_0$, such that, after passing to subsequence,  $\od_i\to\od$, 
\begin{equation*}
  \d_i\to \od \text{ in }L^{6}(\PP_{\frac{1}{2}}(0)),
\end{equation*}
and 
\begin{equation}
\begin{split}
  \frac{r_i^2}{\varepsilon_i}\f(\d_i)\to 0 \text{ in } L^{\frac{10}{3}}(\PP_{\frac{1}{2}}(0)),\\
  \frac{r_i^2}{\varepsilon_i}\f(\d_i)\otimes\d_i\to 0 \text{ in }L^{\frac{5}{2}}(\PP_{\frac{1}{2}(0)}),\\ \frac{r_i^2}{\varepsilon_i}\d_i\otimes\f(\d_i)\to 0 \text{ in } L^{\frac{5}{2}}(\PP_{\frac{1}{2}(0)}),\\
  \frac{r_i^2}{\varepsilon_i^2} F(\d_i)\to 0 \text{ in } L^{\frac{5}{2}}(\PP_{\frac{1}{2}(0)}).
  \end{split}
  \label{eqn:fdL2conv}
\end{equation}
Hence $(\hu, \hd, \hP):\PP_{\frac{1}{2}}(0)\to {\R^3}\times{\R^3}\times \R$ solves the linear system:
\begin{equation}
  \left\{
  \begin{array}{l}
    \pa_t\hu+\nabla\hP-\Delta\hu=-\nabla\cdot S_\alpha[\Delta\hd,\od], \\
    \dv \hu=0, \\
    \pa_t\hd-\Delta\hd=T_\alpha[\nabla\hu,\od].
      \end{array}
  \right.
  \label{eqn:blowupeq}
\end{equation}
By Lemma \ref{lemma:blowupregularity} and \eqref{eqn:blowupbound}, we have that 
$
  (\hu, \hd)\in C^\infty(\PP_{\frac{1}{4}}), \hP\in L^\infty\big(-\big[ \frac{1}{16}, 0 \big], C^\infty(B_{\frac{1}{4}}(0))\big)
$  
satisfies
\begin{equation}
  \begin{split}
    &\tau^{-2}\int_{\PP_\tau(0)}\left( |\hu|^{3}+|\nabla\hd|^{3} \right)dxdt+\big( \tau^{-2}\int_{\PP_\tau(0)}|\hP|^{\frac{3}{2}}dxdt \big)^{2}\\
    &\le C\tau^{3}\big[\int_{\PP_{\frac{1}{2}}(0)}\big( |\hu|^{3}+|\nabla\hd|^{3} \big)dxdt+\big( \int_{\PP_{\frac{1}{2}}(0)}|\hP|^{\frac{3}{2}}dxdt \big)^2\big]\\
    &\le C\tau^{3}, \quad\forall \tau\in \big( 0, \frac{1}{8} \big).
  \end{split}
  \label{}
\end{equation}
and $\exists\alpha_0\in(0,1)$ such that
\begin{equation}
  \big( \fint_{\PP_\tau(0)}|\hd-\hd_{0,\tau}|^{6} dxdt \big)^{\frac{1}{2}}
  \le C\big( \fint_{\PP_{\frac{1}{2}}(0)}|\hd|^{6} dxdt\big)^{\frac{1}{2}} \tau^{3\alpha_0}
  \le C \tau^{3\alpha_0}, \quad\forall \tau\in \big( 0, \frac{1}{8} \big).
  \label{}
\end{equation}
We now claim that 
\begin{equation}
\begin{array}{ll}
 (\hu_i, \nabla\hd_i)\to (\hu, \nabla\hd) &\text{ in }L^{3}(\PP_{\frac{3}{8}}(0)),\\
 \hd_i\to \hd &\text{ in }L^{6}(\PP_{\frac{3}{8}}(0)).
\end{array}
\label{eqn:udstongconv}
\end{equation}
In fact, from the equation for $\hu_i$ and $\hd_i$ in \eqref{eqn:scaledELsyst} we can conclude that 
\begin{equation*}
  \left\|\pa_t\hu_i\right\|_{L_t^2H_x^{-1}+L_t^{\frac{6}{5}}L_x^{\frac{6}{5}}+L_t^{\frac{3}{2}}W_x^{-1, \frac{3}{2}}(\PP_{\frac{3}{8}}(0))}\le C,
\end{equation*}
and
\begin{equation}
  \big\|\pa_t \hd_i\big\|_{L^{\frac{3}{2}}(\PP_{\frac{3}{8}}(0))}
  \le C. 
\end{equation}
Thus \eqref{eqn:udstongconv} follows from Aubin--Lions' compactness Lemma. 
This implies that for any $\tau\in(0,\frac{1}{8})$, 
\begin{eqnarray}
  \tau^{-2}\int_{\PP_\tau(0)}\left( |\hu_i|^{3}+|\nabla\hd_i|^{3} \right)dxdt
  &=&\tau^{-2}\int_{\PP_\tau(0)}(|\hu|^{3}+|\nabla\hd|^{3})dxdt+\tau^{-2}o(1)\nonumber\\
  &\le& C\tau^{3}+\tau^{-2}o(1),\nonumber\\
  \big( \fint_{\PP_\tau(0)}|\hd_i-(\hd_i)_{0, \tau}|^{6} dxdt \big)^{\frac{1}{2}}
  &\le& C \tau^{3\alpha_0}+o(1),
  \label{eqn:huhdestimate}
\end{eqnarray}
where $\lim_{i\to \infty}o(1)=0$. 

Now we need to estimate the pressure $\hP_i$. By taking divergence of the $\hu_i$ equation in \eqref{eqn:blowupeq} we see that 
\begin{eqnarray}
  -\Delta\hP_i&=&\varepsilon_i\dv^2\big[ \hu_i\otimes\hu_i+\nabla\hd_i\odot\nabla\hd_i-\big(\frac{1}{2}|\nabla\hd_i|^2 +\frac{r_i^2}{\varepsilon_i^2} F(\d_i) \big)I_3\big]\nonumber\\
  &&+\dv^2 S_\alpha[\Delta\hd_i-\frac{r_i^2}{\varepsilon_i}\f(\d_i),\d_i]   \ \text{ in }\ B_1.
\end{eqnarray}
We claim that 
\begin{equation}
  \tau^{-2}\int_{\PP_\tau(0)}|\hP_i|^{\frac{3}{2}}dxdt\le C\tau+ C\tau^{-2} (\varepsilon_i+o(1)). 
  \label{eqn:hPiestimate}
\end{equation}
Since $S_\alpha[\Delta\hd_i, \d_i]$ does not necessarily have a small $L^2$-norm in $\PP_{\frac{1}{2}}(0)$,
to achieve \eqref{eqn:hPiestimate} we will show the following strong convergence in $L^2$:
\begin{equation}
  (\nabla\hu_i, \Delta\hd_i)\to (\nabla\hu, \Delta\hd) \text{ in }L^2\big( \PP_{\frac{3}{8}}(0) \big).
  \label{eqn:stongL2conv}
\end{equation}
In order to prove \eqref{eqn:stongL2conv}, first observe that by subtracting the equation \eqref{eqn:scaledELsyst} from the equations \eqref{eqn:blowupeq}, we see that 
\begin{equation*}
  (\tu_i, \td_i, \tP_i):=\left( \hu_i-\hu, \hd_i-\hd, \hP_i-\hP \right)
\end{equation*}
solves the following system of equations in $\PP_{\frac{1}{2}}(0)$:
\begin{equation}
  \left\{
  \begin{array}{l}
    \pa_t\tu_i-\Delta\tu_i+\nabla\tP_i=-\varepsilon_i\hu_i\cdot\nabla\hu_i-\varepsilon_i \nabla\hd_i\cdot \Delta \hd_i+\frac{r_i^2}{\varepsilon_i}\nabla\d_i\cdot \f(\d_i)\\
-\nabla\cdot S_\alpha[\Delta\hd_i-\frac{r_i^2}{\varepsilon_i}\f(\d_i),\d_i]
   +\nabla\cdot S_\alpha[\Delta\hd,\od], \\
    \dv \tu_i=0, \\
    \pa_t\td_i-\Delta\td_i=-\varepsilon_i\hu_i\cdot\nabla\hd_i-\frac{r_i^2}{\varepsilon_i}\f(\d_i)+T_\alpha[\nabla\hu_i,\d_i] -T_\alpha[\nabla\hu,\od].
  \end{array}
  \right.
  \label{eqn:differenceequ}
\end{equation}
Since $(\hu_i, \hd_i, \hP_i)$ is a suitable weak solution of \eqref{eqn:scaledELsyst} and 
Lemma 4.2 guarantees the smoothness of $(\hu,\hd, \hP)$, it is not hard to see
that \eqref{eqn:differenceequ} also enjoys a local energy inequality which leads to \eqref{eqn:stongL2conv}. 
In fact,  multiplying the $\tu_i$ equation by $\tu_i\p$, and $\nabla\td_i$ equation by $\nabla\td_i \p$, integrating the resulting equation over ${\R^3}\times [0,T]$, and applying the integration by parts, we obtain that
\begin{eqnarray}
 && \int_{\R^3}|\tu_i|^2\p(x, t) dx+2\int_{0}^{t}\int_{\R^3}|\nabla\tu_i|^2\p dxds\nonumber\\
  &&\le\int_{0}^{t}\int_{\R^3}|\tu_i|^2(\pa_t\p+\Delta\p)dxds\nonumber\\
  &&+\int_{0}^{t}\int_{\R^3} [\varepsilon_i |\hu_i|^2\hu_i\cdot \nabla\p+2 \varepsilon_i (\hu_i\cdot \nabla\hu_i)\cdot \hu \p+2\tP_i\tu_i\cdot \nabla\p] dxds\nonumber\\
 && +2\int_{0}^{t}\int_{\R^3}(-\varepsilon_i \nabla\hd_i\cdot \Delta\hd_i\cdot(\hu_i-\hu)\p+\frac{r_i^2}{\varepsilon_i}\nabla\d_i\cdot \f(\d_i)\cdot \tu_i\p) dxds\nonumber\\
 && -\frac{2r_i^2}{\varepsilon_i}\int_{0}^{t}\int_{\R^3}S_\alpha [\f(\d_i), \d_i]:(\nabla\tu_i \p+\tu_i\otimes\nabla\p) dxds\nonumber\\
 && +2\int_{0}^{t}\int_{\R^3} \left( S_\alpha[\Delta\hd_i, \d_i]-S_\alpha[\Delta\hd,\od] \right)
  :\left( \nabla\hu_i \p-\nabla\hu \p +\tu_i\otimes \nabla\p \right) dxds,
  \label{eqn:tuenergy}
\end{eqnarray}
and
\begin{eqnarray}
 && \int_{\R^3}|\nabla\hd_i|^2 \p(x, t)dx+2\int_{0}^{t}\int_{\R^3}|\Delta\td_i|^2 \p dxds\nonumber\\
 && \le\int_{0}^{t}\int_{\R^3}(|\nabla\td_i|^2 (\pa_t\p+\Delta\p)+2\varepsilon_i\hu_i \cdot \nabla\hd_i\cdot (\Delta\hd_i\p-\Delta\hd\p+\nabla\p\cdot\nabla\td_i )) dxds\nonumber\\
 && +\frac{2r_i^2}{\varepsilon_i}\int_{0}^{t}\int_{\R^3}  \f(\d_i)\cdot (\Delta\td_i\p+\nabla\p\cdot\nabla\td_i ) dxds\nonumber\\
&&  -2\int_{0}^{t}\int_{\R^3} \left[ T_\alpha[\nabla\hu_i,\d_i]-T_\alpha[\nabla\hu,\od] \right]
  \cdot\big( \Delta\hd_i\p-\Delta\hd \p+\nabla\p\cdot\nabla\td_i \big) dxds.
  \label{eqn:tdenergy}
\end{eqnarray}
Recall that 
\begin{align}
 \int_{0}^{t}\int_{\R^3}S_\alpha[\Delta\hd_i,\d_i]:\nabla\hu_i\p dxdt&= \int_{0}^{t}\int_{\R^3}T_\alpha[\nabla\hu_i,\d_i]\cdot \Delta\hd_i\p dxdt, \\
 \int_{0}^{t}\int_{\R^3} S_\alpha [\Delta\hd,\od]:\nabla\hu\p dxdt&=\int_{0}^{t}\int_{\R^3}T_\alpha[\nabla\u,\od]:\Delta\hd\p dxdt.
  \label{eqn:cancellation}
\end{align}
Therefore we can add \eqref{eqn:tuenergy} and \eqref{eqn:tdenergy} to obtain that
\begin{equation}
  \begin{split}
    &\int_{\R^3}(|\tu_i|^2+|\nabla\td_i|^2)\p(x,t)dx+2\int_{0}^{t}\int_{\R^3}
    \left( |\nabla\tu_i|^2+|\Delta\td_i|^2 \right)\p dxds\\
    &\le\int_{0}^{t}\int_{\R^3}[(|\tu_i|^2+|\nabla\td_i|^2)(\pa_t\p+\Delta\p)\\
    &\qquad+(\varepsilon_i |\hu_i|^2 \hu_i+2\tP_i\tu_i)\cdot \nabla\p+2\varepsilon_i(\hu_i\cdot \nabla\hu_i)\cdot \hu\p]dxds\\
    &+2\int_{0}^{t}\int_{\R^3} \varepsilon_i \nabla\hd_i\cdot \Delta\hd_i\cdot\hu \p-\varepsilon_i \hu_i\cdot \nabla\hd_i\cdot \Delta\hd \p+2\varepsilon_i \hu_i\cdot \nabla\hd_i\cdot (\nabla\p\cdot \nabla\td_i) dxds\\
    &+\frac{2r_i^2}{\varepsilon_i}\int_{0}^{t}\int_{\R^3}\nabla\d_i\cdot \f(\d_i)\cdot \tu_i\p+\f(\d_i)\cdot \left( \Delta\td_i\p+\nabla\p\cdot \nabla\td_i \right)dxds\\
    &-\frac{2r_i^2}{\varepsilon_i^2}\int_{0}^{t}\int_{\R^3}  S_\alpha[\f(\d_i),\d_i]:(\nabla\tu_i\p+\tu_i\otimes\nabla\p) dxds\\
    &+2\int_{0}^{t}\int_{\R^3} S_\alpha[\Delta\hd_i,\d_i] :(\tu_i\otimes\nabla\p-\nabla\hu\p) dxds\\
    &-2\int_{0}^{t}\int_{\R^3} T_\alpha[\nabla\hu_i,\d_i]\cdot (\nabla\p\cdot \nabla\td_i-\Delta\hd\p) dxds\\
    &-2\int_{0}^{t}\int_{\R^3} S_\alpha[\Delta\hd,\od]:(\nabla\hu_i\p+\tu_i\otimes\nabla\p)dxds\\
    &+2\int_{0}^{t}\int_{\R^3} T_\alpha[\nabla\hu,\od]\cdot (\Delta\hd_i\p +\nabla\p\cdot \nabla\td_i)dxds\\
    &:=\sum_{k=1}^{8}I_k(i).
  \end{split}
    \label{eqn:keycancel}
\end{equation}
From the  convergence \eqref{eqn:udweakconv}, we know that 
\begin{equation*}
  \begin{split}
  &\lim_{i\to\infty}\left\|(\tu_i, \nabla\td_i)\right\|_{L^{3}(\PP_{\frac{3}{8}}(0))}=0, \\
  &\tP_i\rightharpoonup 0 \text{ in }L^{\frac{3}{2}}(\PP_{\frac{3}{8}}(0)), \\
  &(\nabla\tu_i, \nabla^2\td_i)\rightharpoonup (0, 0) \text{ in }L^2(\PP_{\frac{3}{8}}(0)).
  \end{split}
\end{equation*}
This, together with \eqref{eqn:fdL2conv}, implies that as $i\to\infty$, 
$\sum_{k=1}^{4}I_k\to 0$ and
\begin{equation}
  \begin{split}
    &I_5\to- 2\int_{0}^{t}\int_{\R^3} S_\alpha[\Delta\hd,\od]:\nabla\hu \p dxds\\
    &I_6\to 2\int_{0}^{t}\int_{\R^3}  T_\alpha[\nabla\hu,\od]\cdot \Delta\hd \p dxds\\
    &I_7\to -2\int_{0}^{t}\int_{\R^3} S_\alpha[\Delta\hd,\od]:\nabla \hu \p dxds\\
    &I_8\to 2 \int_{0}^{t}\int_{\R^3}  T_\alpha[\nabla\hu,\od]:\Delta 
    \hd\p dxds,
  \end{split}
  \label{}
\end{equation}
Therefore
\begin{equation*}
    \sum_{k=1}^{8}I_k(i)
    \to 4\int_{0}^{t}\int_{\R^3} T_\alpha[\nabla\hu,\od]:\Delta\hd\p - S_\alpha[\Delta\hd,\od]:\nabla\hu\p dxds=0,
\end{equation*}
and \eqref{eqn:stongL2conv} holds.

Let $\eta\in C_0^\infty(B_{\frac{3}{8}}(0))$ be such that $\eta\equiv 1$ in $B_{\frac{5}{16}}(0)$, $0\le \eta\le 1$. For any 
$-(\frac{3}{8})^2\le t\le 0$, define $\hP_i^{(1)}(\cdot , t):{\R^3}\to \R$ by
\begin{multline}
  \hP_i^{(1)}(x, t)=\int_{\R^3}\nabla_x^2 G(x-y)\Big\{ \varepsilon_i\eta\big[ \hu_i\otimes\hu_i+\nabla\hd_i\odot\nabla\hd_i-\big(\frac{1}{2}|\nabla\hd_i|^2 +\frac{r_i^2}{\varepsilon_i^2} F(\d_i) \big)I_3 \big]\\
  -\frac{r_i^2}{\varepsilon_i}[S_\alpha[\f(\d_i),\d_i]]
  +\big[  S_\alpha[\Delta\hd_i,\d_i]-S_\alpha[\Delta\hd,\od]\big]\Big\}(y, t)dy, 
  \label{}
\end{multline}
and $\hP_i^{(2)}(\cdot , t)=(\hP_i-\hP_i^{(1)})(\cdot , t)$. Then  
\begin{equation}
  -\Delta\hP_i^{(2)}=\dv^2S_\alpha[\Delta\hd,\od]\ \ \text{ in }\ \ B_{\frac{5}{16}}(0).
  \label{}
\end{equation}
For $\hP_i^{(1)}$, by the Calderon-Zgymund theory we have that 
\begin{equation}
\begin{split}
  \big\|\hP_i^{(1)}\big\|_{L^{\frac{3}{2}}({\R^3})}&\le C  \Big[\varepsilon_i\big( \big\|\hu_i\big\|_{L^{3}(B_{\frac{3}{8}}(0))}^2+\big\|\nabla\hd_i\big\|_{L^{3}(B_{\frac{3}{8}}(0))}^2+\frac{r_i^2}{\varepsilon_i^2}\big\|F(\d_i)\big\|_{L^{\frac{3}{2}}(B_{\frac{3}{8}}(0))}\big)\\
  &+ \frac{r_i^2}{\varepsilon_i}\big\||\f(\d_i)||\d_i|\big\|_{L^{\frac{3}{2}}(B_{\frac{3}{8}}(0))}+\big\|S_\alpha[\Delta\hd_i,\d_i]-
  S_\alpha[\Delta\hd,\od]\big\|_{L^{\frac{3}{2}}(B_{\frac{3}{8}}(0))}
   \Big]\\
    &\le C\Big[\varepsilon_i\big( \big\|\hu_i\big\|_{L^{3}(B_{\frac{3}{8}}(0))}^2+\big\|\nabla\hd_i\big\|^2_{L^{3}(B_{\frac{3}{8}}(0))}+\frac{r_i^2}{\varepsilon_i^2}\big\|F(\d_i)\big\|_{L^{\frac{3}{2}}(B_{\frac{3}{8}}(0))} \big)\\
   &+\frac{r_i^2}{\varepsilon_i}\big\||\f(\d_i)||\d_i|\big\|_{L^{\frac{3}{2}}(B_{\frac{3}{8}}(0))}+
   \big\|\d_i\big\|_{L^{6}(B_{\frac{3}{8}}(0))}\big\|\Delta\hd_i-\Delta\hd\big\|_{L^2(B_{\frac{3}{8}}(0))}\\
   &+\big\|\d_i-\od\big\|_{L^{6}(B_{\frac{3}{8}}(0))}\big\|\Delta\hd\big\|_{L^2(B_{\frac{3}{8}}(0))}\Big]
  \end{split}
\end{equation}
Hence we have
\begin{equation}
  \begin{split}
    \big\|\hP^{(1)}_i\big\|_{L^{\frac{3}{2}}(\PP_{\frac{1}{3}}(0))}\le C(\varepsilon_i+o(1)). 
  \end{split}
  \label{4.24}
\end{equation}
From the standard theory on linear elliptic equations, $\hP_i^{(2)}\in C^\infty(B_{\frac{5}{16}}(0))$ satisfies that for any $0<\tau<\frac{9}{32}$, 
\begin{equation}
  \begin{split}
    \tau^{-2}\int_{\PP_\tau(0)}|\hP_i^{(2)}|^{\frac{3}{2}} dxdt&\le C\tau\Big[ \int_{\PP_{\frac{9}{32}}(0)}|\hP_i^{(2)}|^{\frac{3}{2}} dxdt+\big\|\nabla^3\hd\big\|_{L^\infty(\PP_{\frac{9}{32}(0)})}^{\frac{3}{2}} \Big]\\
    &\le C\tau\Big[ \int_{\PP_{\frac{9}{32}}(0)}(|\hP_i|^{\frac{3}{2}}+|\hP_i^{(1)}|^{\frac{3}{2}})dxdt+\big\|\nabla^3 \hd\big\|_{L^\infty(\PP_{\frac{9}{32}}(0))}^{\frac{3}{2}} \Big]\\
    &\le C\tau(1+\varepsilon_i+o(1)). \\
  \end{split}
  \label{4.25}
\end{equation}
Combining \eqref{4.24} with \eqref{4.25} yields \eqref{eqn:hPiestimate}. 
It follows from \eqref{eqn:huhdestimate} and \eqref{eqn:hPiestimate} that there exist sufficiently small $\tau_0\in(0,\frac{1}{4})$ and sufficiently large $i_0$, depending on $\tau_0$, such that for any $i\ge i_0$, it holds that 
\begin{eqnarray*}
  \tau_0^{-2}\int_{\PP_{\tau_0}(0)}(|\hu_i|^{3}+|\nabla\hd_i|^{3})dxdt
  &+&\big( \tau_0^{-2}\int_{\PP_{\tau_0}(0)}|\hP_i|^{\frac{3}{2}} dxdt \big)^{\frac{3}{2}}\\
  &+&\big( \fint_{\PP_{\tau_0}(0)}|\hd_i-(\hd_i)_{\tau_0, 0}|^{6}dxdt \big)^{\frac{1}{2}}
  \le \frac{1}{4}.
\end{eqnarray*}
This contradicts \eqref{3.8}. The proof of Lemma \ref{lemma:smallregularity} is completed.  
\end{proof}

Now we will establish the smoothness of the limit equation \eqref{eqn:blowupeq} in the following lemma.
\begin{lemma}
  Assume that $(\hu, \hd)\in (L_t^\infty L_x^2\cap L_t^2 H_x^1)(\PP_{\frac{1}{2}}(0))\times\left( L_t^\infty H_x^1\cap L_t^2 H_x^2 \right)(\PP_{\frac{1}{2}})$ and $\hP\in L^{\frac{3}{2}}(\PP_{\frac{1}{2}}(0))$ is a weak solution of the linear system \eqref{eqn:blowupeq}, then $(\hu, \hd)\in C^\infty(\PP_{\frac{1}{4}}(0))$, and the following estimate
  \begin{equation}
    \tau^{-2}\int_{\PP_\tau(0)}\left( |\hu|^{3}+|\nabla\hd|^{3}+|\hP|^{\frac{3}{2}} \right) dxdt\le C\tau^3\int_{\PP_{\frac{1}{2}}(0)}\left( |\hu|^{3}+|\nabla\hd|^{3}+|\hP|^{\frac{3}{2}} \right) dxdt
    \label{eqn:smoothestimate}
  \end{equation}
  holds for any $\tau\in ( 0, \frac{1}{8})$. 
  \label{lemma:blowupregularity}
\end{lemma}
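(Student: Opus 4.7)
The plan is to exploit the observation that, because $\od\in\R^3$ is a \emph{constant} vector, $S_\alpha[\,\cdot\,,\od]$ and $T_\alpha[\,\cdot\,,\od]$ are constant-coefficient linear operators, and \eqref{eqn:blowupeq} is therefore a coupled linear parabolic system for $(\hu,\hd)$ with $\hP$ the associated pressure. Interior smoothness will follow from a Caccioppoli inequality combined with a difference-quotient bootstrap, after which the decay estimate \eqref{eqn:smoothestimate} becomes an immediate consequence of interior $L^\infty$ bounds together with the scaling $|\PP_\tau|\asymp\tau^5$.

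First I would prove a Caccioppoli-type inequality. For a cutoff $\p\in C_0^\infty(\PP_{1/2}(0))$ equal to $1$ on $\PP_r$ with $r<1/2$, test the $\hu$-equation against $\hu\p^2$ and the $\hd$-equation against $-\Delta\hd\,\p^2$. The two coupling contributions are
$$\int S_\alpha[\Delta\hd,\od]:\nabla(\hu\p^2)\,dxdt \quad\text{and}\quad -\int T_\alpha[\nabla\hu,\od]\cdot\Delta\hd\,\p^2\,dxdt,$$
whose leading parts cancel exactly by \eqref{eqn:keycancell}; the residual commutators involving $\nabla\p$ (and the analogous commutator from the pressure term $\int\nabla\hP\cdot\hu\p^2=-2\int\hP\,\hu\cdot\p\nabla\p$) are absorbed by Young's inequality. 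The resulting bound controls $\|\nabla\hu\|_{L^2(\PP_r)}+\|\nabla^2\hd\|_{L^2(\PP_r)}$ by the norms of $(\hu,\nabla\hd,\hP)$ on $\PP_{1/2}$.

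Because the coefficients are constant, any spatial difference quotient $D_\gamma^h(\hu,\hd,\hP)$ solves the very same system \eqref{eqn:blowupeq}. Applying the Caccioppoli estimate to these differences and sending $h\to 0$ promotes one spatial derivative; iterating gives $\nabla^k\hu,\nabla^{k+1}\hd\in L^2(\PP_\rho)$ for every $k$ and every $\rho<1/4$. Time derivatives are then read off directly from the equations, so parabolic Sobolev embedding yields $(\hu,\hd)\in C^\infty(\PP_{1/4}(0))$. For the pressure, the Poisson equation $-\Delta\hP=\dv\dv S_\alpha[\Delta\hd,\od]$ in $B_{1/4}$ together with the hypothesis $\hP\in L^{3/2}$ and the smoothness of $\hd$ gives $\hP(\cdot,t)\in C^\infty(B_{1/4})$ for a.e.\ $t$.

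Finally, interior Sobolev embedding on a slightly smaller cylinder yields
$$\|\hu\|_{L^\infty(\PP_{1/4})}+\|\nabla\hd\|_{L^\infty(\PP_{1/4})}+\|\hP\|_{L^\infty(\PP_{1/4})}\le C\big(\|\hu\|_{L^3(\PP_{1/2})}+\|\nabla\hd\|_{L^3(\PP_{1/2})}+\|\hP\|_{L^{3/2}(\PP_{1/2})}\big),$$
and \eqref{eqn:smoothestimate} follows at once from the trivial bound
$$\tau^{-2}\int_{\PP_\tau(0)}(|\hu|^3+|\nabla\hd|^3+|\hP|^{3/2})\,dxdt\le C\tau^3\big(\|\hu\|_{L^\infty(\PP_{1/4})}^3+\|\nabla\hd\|_{L^\infty(\PP_{1/4})}^3+\|\hP\|_{L^\infty(\PP_{1/4})}^{3/2}\big),$$
valid since $|\PP_\tau|\asymp\tau^5$ for $\tau\in(0,1/8)$. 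The only delicate point in the whole argument is the Caccioppoli step: without the cancellation \eqref{eqn:keycancell} the leading coupling between $\hu$ and $\hd$ pairs $\Delta\hd$ with $\nabla\hu$ in a way that cannot be absorbed into energy terms, so the Ericksen--Leslie algebraic structure remains essential even in this linear limiting problem.
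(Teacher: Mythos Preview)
Your overall strategy---Caccioppoli plus difference quotients, using the cancellation \eqref{eqn:keycancell}---matches the paper's, but there is a genuine gap in the bootstrap step that the paper explicitly singles out as ``more delicate here due to the low temporal integrability of pressure.''

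The problem is the pressure term in the iterated Caccioppoli inequality. In the first step you correctly bound $-2\int\hP\,\hu\cdot\p\nabla\p$ by $\|\hP\|_{L^{3/2}}\|\hu\|_{L^3}$. But when you apply the same Caccioppoli estimate to the differenced system, the corresponding term becomes $-2\int D^h\hP\,D^h\hu\cdot\p\nabla\p$, and neither factor is under control: a uniform bound on $\|D^h\hP\|_{L^{3/2}}$ would require $\nabla\hP\in L^{3/2}$, and a uniform bound on $\|D^h\hu\|_{L^{3}}$ would require $\nabla\hu\in L^{3}$; from step one you only have $\nabla\hu\in L^2$. Discrete integration by parts does not help either, since it transfers a second difference onto $\hu$ and becomes circular. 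The difficulty is structural: $\hP$ is only $L^{3/2}$ in time, below the $L^2$ threshold at which Young's inequality can absorb it into the energy.

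The paper resolves this by splitting $\hP=\hP^{(1)}+\hP^{(2)}$, where $\hP^{(1)}$ is the Newtonian potential of (a cutoff of) $S_\alpha[\Delta\hd,\od]$ and hence lies in $L^2_{t,x}$ by Calder\'on--Zygmund and the $L^2_tH^2_x$ bound on $\hd$, while $\hP^{(2)}=\hP-\hP^{(1)}$ is harmonic in $x$ and therefore has all spatial derivatives controlled in $L^{3/2}_t$ by its own $L^{3/2}$ norm. In the higher-order energy identity the pressure contribution is then handled in two pieces: for $\hP^{(1)}$ one integrates the derivative back onto $\hu$ and uses the $L^2$ bound; for $\hP^{(2)}$ one uses harmonicity to move derivatives freely (see the estimate of $I_1$ in the paper's proof). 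Your proposal omits this decomposition, and without it the bootstrap does not close.

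A second, smaller issue: your claimed bound $\|\hP\|_{L^\infty(\PP_{1/4})}\le C(\cdots)$ is not justified. The Stokes system determines $\hP$ only up to a function of $t$; after the splitting one gets $\hP^{(1)}\in C^\infty$ but $\hP^{(2)}$ is merely harmonic in $x$ for a.e.\ $t$, hence $L^{3/2}_tL^\infty_x$, not $L^\infty_{t,x}$. This affects the constant-in-time part of your final decay inequality for $|\hP|^{3/2}$.
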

\begin{proof}
  The smoothness of the limit equation \eqref{eqn:blowupeq} doesn't follow from the standard theory of linear equations, since the source term of $\hu$ equations involve terms depending on the third order derivatives of $\hd$. It is based on higher order energy methods, for which the cancellation property, as in the derivation of local energy inequality for suitable weak solution to \eqref{eqn:ELmodel1}, plays a critical role. This strategy has been adapted by Huang--Lin--Wang
  in\cite[Lemma 3.2]{HuangLinWang2014EricksenLeslieTwoDimension}  for the full Ericksen--Leslie system in 2D. However, it is more delicate here due to the low temporal integrability of pressure. To address this issue, we split the pressure into two parts $\hP^{(1)}$ and $\hP^{(2)}$, where $\hP^{(1)}$ solves the Poisson equation involving $\Delta \hd$ which belongs to $L^2$, and $\hP^{(2)}$, while is only $L^\frac{3}{2}$ in time, is harmonic in space. In fact, if we take the
  divergence of the equation \eqref{eqn:blowupeq}$_1$, then we have $\hP$ satisfies the following Poisson equation:
  \begin{equation}
    -\Delta\hP=\dv^2S_\alpha[\Delta\hd, \od] \text{ in }\PP_{\frac{1}{2}}. 
    \label{eqn:hPeq}
  \end{equation}
  Now let $\zeta\in C_0^\infty(B_{\frac{1}{2}}(0))$ be a cut-off function of $B_{\frac{3}{8}}(0)$, i.e., $\zeta\equiv 1$ on $B_{\frac{3}{8}}(0)$, $0\le \zeta\le 1$. Define $\hP^{(1)}(\cdot , t):\R^3\to \R$, 
  \begin{equation*}
    \hP^{(1)}(x,t):=\int_{\R^3}\nabla_x^2 G(x-y)\zeta(y) S_\alpha[\Delta\hd, \od](y,t)dy, 
  \end{equation*}
  and $\hP^{(2)}(\cdot , t):=(\hP-\hP^{(1)})(\cdot , t)$. For $\hP^{(1)}$, by Calderon-Zygmund's singular integral estimate we have
  \begin{equation*}
    \left\|\hP^{(1)}(\cdot , t)\right\|_{L^2(\R^3)}\le C\left\|\Delta\hd(\cdot , t)\right\|_{L^2(B_{\frac{1}{2}})}, \qquad -\frac{1}{4}\le t\le 0.
  \end{equation*}
  Hence we can integrate the inequality above in time to get
  \begin{equation}
    \int_{\PP_{\frac{1}{2}}}|\hP^{(1)}|^2 dxdt\le C\int_{\PP_{\frac{1}{2}}}|\Delta\hd|^2 dxdt.
    \label{4.41}
  \end{equation}
  For $\hP^{(2)}$, it is easy to see that 
  \begin{equation}
    -\Delta\hP^{(2)}=0 \text{ in }B_{\frac{3}{8}}.
    \label{}
  \end{equation}
  By the standard regularity theory of harmonic function we have
  \begin{equation}
    \begin{split}
  \int_{\PP_{\frac{5}{16}}}|\nabla^l \hP^{(2)}|^{\frac{3}{2}} dxdt&
  \le C\int_{\PP_{\frac{3}{8}}}|\hP^{(2)}|^{\frac{3}{2}} dxdt\\
  &\le C \int_{\PP_{\frac{3}{8}}}(|\hP|^{\frac{3}{2}}+|\hP^{(1)}|^{\frac{3}{2}})dxdt\\
  &\le C\int_{\PP_\frac{1}{2}}|\hP|^{\frac{3}{2}} dxdt+  C\int_{\PP_{\frac{1}{2}}}|\hP^{(1)}|^2dxdt+C \\
  &\le C\int_{\PP_{\frac{1}{2}}}|\hP|^{\frac{3}{2}} dxdt+C \int_{\PP_{\frac{1}{2}}}|\Delta\hd|^2 dxdt+C , \qquad l=1,2. 
\end{split}\label{4.43}
\end{equation}
  Taking $\frac{\pa}{\pa x_i}$ of the linear equation \eqref{eqn:blowupeq} yields 
  \begin{equation}
    \left\{
    \begin{array}{l}
      \pa_t \hu_{x_i}+\nabla \hP_{x_i}-\Delta \hu_{x_i}=-\nabla\cdot S_\alpha[\Delta\hd, \od]_{x_i}, \\
      \nabla \cdot\hu_{x_i}=0, \\
      \pa \hd_{x_i}-\Delta\hd_{x_i}=T_{\alpha}[\nabla \hu, \od]_{x_i}.
    \end{array}
    \right.
    \label{eqn:blowupeqxi}
  \end{equation}
  For any $\eta\in C_0^\infty( B_{\frac{5}{16}})$, Multiplying the equation \eqref{eqn:blowupeqxi}$_1$ by $\hu_{x_i}\eta^2$ and the $\nabla\hd_{x_i}$  equation from \eqref{eqn:blowupeqxi}$_3$ by $\nabla\hd_{x_i}\eta^2$ and integrating the resulting equations over $B_{\frac{5}{16}}$, we obtain\footnote{Strictly speaking, we need to take finite quotient $D_h^j$ of \eqref{eqn:blowupeq} $(j=1,2,3)$ and then sending $h\to0$.}
  \begin{equation}
  \begin{split}
    &\frac{d}{dt}\int_{B_{\frac{5}{16}}}|\nabla\hu|^2\eta^2 dx+2\int_{B_{\frac{5}{16}}}|\nabla^2\hu|^2\eta^2 dx\\
    &=2\int_{B_{\frac{5}{16}}} [ \hP_{x_i} \hu_{x_i}\cdot\nabla(\eta^2)-
    \nabla \hu_{x_i}:\hu_{x_i}\otimes\nabla(\eta^2)]dx\\
    &\quad+2\int_{B_{\frac{5}{16}}}[S_{\alpha}[\Delta\hd, \od]_{x_i}:\hu_{x_i}\otimes\nabla(\eta^2)+S_\alpha[\Delta\hd, \od]_{x_i}:\nabla\hu_{x_i}\eta^2] dx.
    \label{4.45}
    \end{split}
  \end{equation}
\begin{equation}
\begin{split}
    &\frac{d}{dt}\int_{B_{\frac{5}{16}}}|\nabla^2\hd|^2\eta^2 dx+2\int_{B_{\frac{5}{16}}}|\Delta\nabla\hd|^2\eta^2 dx\\
    &=-2 \int_{B_{\frac{5}{16}}} \nabla_j \nabla\hd_{x_i}:\nabla\hd_{x_i}\otimes \nabla_j(\eta^2) dx\\
    &\ -2\int_{B_{\frac{5}{16}}}T_\alpha[\nabla\hu, \od]_{x_i}\cdot\nabla_j\hd_{x_i} \nabla_j(\eta^2)+T_{\alpha}[\nabla\hu, \od]_{x_i}\cdot\Delta \hd_{x_i}\eta^2 dx.
    \end{split}
    \label{4.46}
  \end{equation}
  Once again, we have the cancellation
  \begin{align*}
    &\int_{B_{\frac{5}{16}}}[S_\alpha[\Delta\hd, \od]_{x_i}:\nabla\hu_{x_i}\eta^2-T_\alpha[\nabla\hu, \od]_{x_i}\cdot \Delta\hd_{x_i}\eta^2]dx
    \\
    =&\int_{B_{\frac{5}{16}}}[S_{\alpha}[\Delta\hd_{x_i}, \od]:\nabla\hu_{x_i}\eta^2-T_{\alpha}[\nabla\hu_{x_i}, \od]\cdot \Delta\hd_{x_i}\eta^2]dx
    =0.
  \end{align*}
Now we add \eqref{4.45} and \eqref{4.46} together to get 
\begin{equation}
\begin{split}
  &\frac{d}{dt}\int_{B_{\frac{5}{16}}}\left( |\nabla\hu|^2+|\nabla^2\hd|^2 \right)\eta^2 dx
  +\int_{B_{\frac{5}{16}}}\left( |\nabla^2\hu|^2+|\Delta\nabla\hd|^2 \right)\eta^2 dx\\
  &=2\int_{B_{\frac{5}{16}}}\hP_{x_i}\hu_{x_i}\cdot\nabla(\eta^2)dx\\
  &\quad-2\int_{B_{\frac{5}{16}}} [\nabla\hu_{x_i}:\hu_{x_i}\otimes\nabla(\eta^2)+\nabla_j \nabla\hd_{x_i}:\nabla\hd_{x_i}\otimes\nabla_j(\eta^2)]dx\\
  &\quad+2\int_{B_{\frac{5}{16}}}[S_\alpha[\Delta\hd_{x_i}, \od]:\hu_{x_i}\otimes\nabla(\eta^2)-T_\alpha[\nabla\hu_{x_i}, \od]\cdot \nabla_j\hd_{x_i}\nabla_j(\eta^2)]dx\\
  &:=I_1+I_2+I_3.
  \label{4.47}
  \end{split}
\end{equation}
We have the following estimates:
\begin{equation*}
\begin{split}
  &|I_1|\le 2\Big|\int_{B_{\frac{5}{16}}}(\hP^{(1)}\hu_{x_ix_i}\cdot \nabla(\eta^2)+\hP^{(1)}\hu_{x_i}\cdot \nabla(\eta^2)_{x_i})dx\Big|+2\Big|\int_{B_{\frac{5}{16}}}\hu\cdot (\hP_{x_i}^{(2)}\nabla(\eta^2))_{x_i} dx\Big|\\
  &\quad\le\frac{1}{32}\int_{B_\frac{5}{16}}|\nabla^2\hu|^2\eta^2 dx+C\int_{B_{\frac{1}{2}}}(|\nabla\hu|^2\eta^2+|\nabla\hu|^2|\nabla\eta|^2)dx +C\int_{\supp \eta}|\hP^{(1)}|^2 dx\\
  &\qquad+ C\int_{\supp \eta}(|\hu|^{3}+|\nabla\hP^{(2)}|^{\frac{3}{2}}+|\nabla^2\hP^{(2)}|^{\frac{3}{2}}) dx. 
\end{split}
\end{equation*}
\begin{align*}
  |I_2|&\le \frac{1}{16}\int_{B_{\frac{5}{16}}}\left( |\nabla^2\hu|^2+|\Delta\nabla\hd|^2 \right)\eta^2 dx
  +C\int_{B_{\frac{5}{16}}}\left( |\nabla\hu|^2+|\nabla^2\hd|^2 \right)|\nabla\eta|^2 dx, \\
  |I_3|&\le \frac{1}{16}\int_{B_{\frac{5}{16}}}\left( |\nabla^2\hu|^2+|\Delta\nabla\hd|^2 \right)\eta^2dx
  +C\int_{B_{\frac{5}{16}}}\left( |\nabla\hu|^2+|\nabla^2\hd|^2 \right)|\nabla\eta|^2 dx.
\end{align*}
Putting these estimates into \eqref{4.47}, we obtain
\begin{equation}
\begin{split}
  &\frac{d}{dt}\int_{B_{\frac{5}{16}}}(|\nabla\hu|^2+|\nabla^2\hd|^2)\eta^2dx
  +\int_{B_{\frac{5}{16}}}\left( |\nabla^2\hu|^2+|\nabla^3\hd|^2\right)\eta^2dx\\
  &\le C\int_{\supp \eta}(|\nabla\hu|^2+|\nabla^2\hd|^2+ |\hP^{(1)}|^2+|\hu|^{3}+|\nabla\hP^{(2)}|^{\frac{3}{2}}+|\nabla^2\hP^{(2)}|^{\frac{3}{2}})dx.
  \end{split}
    \label{4.48}
\end{equation}
By Fubini's theorem, there exists $t_*\in \left[ -(\frac{5}{16})^2,-(\frac{9}{32})^2 \right]$ such that 
\begin{equation*}
  \int_{B_{\frac{5}{16}}}\left( |\nabla\hu|^2+|\nabla^2\hd|^2 \right)\eta^2(t_*)dx
  \le 100 \int_{\PP_{\frac{5}{16}}}(|\nabla\hu|^2+|\nabla^2\hd|^2)\eta^2 dxdt.
\end{equation*}
Integrating \eqref{4.48} for  $t\in [t_*, 0]$ yields that 
\begin{equation}
\begin{split}
  &\sup_{-(\frac{9}{32})^2\le t\le 0}\int_{B_{\frac{5}{16}}}(|\nabla\hu|^2+|\nabla^2\hd|^2)\eta^2(t)dx
  +\int_{-(\frac{9}{32})^2
  \le t\le 0}\int_{B_{\frac{5}{16}}}(|\nabla^2\hu|^2+|\nabla^3\hd|^2)\eta^2dxdt\\
  &\le C\int_{-\frac14}^0\int_{\supp \eta}(|\nabla\hu|^2+|\nabla^2\hd|^2+|\hP^{(1)}|^2+|\hu|^{3}+|\nabla\hP^{(2)}|^{\frac{3}{2}}+|\nabla^2\hP^{(2)}|^{\frac{3}{2}})dxdt\\
  &\qquad+C\int_{\PP_{\frac{5}{16}}}(|\nabla\hu|^2+|\nabla^2\hd|^2)\eta^2dxdt\\
  &\le C\int_{\PP_{\frac{1}{2}}}(|\nabla\hu|^2+|\nabla^2\hd|^2+|\hu|^{3}+|\hP|^{\frac{3}{2}})dxdt
  + C.
  \label{4.49}
  \end{split}
\end{equation}
For the pressure $P$, taking divergence of the equation \eqref{eqn:blowupeq}$_{1}$ yields that for any $-\frac{1}{4}\le t\le 0$, 
\begin{equation}
  -\Delta\hP_{x_i}=\dv^2 S_\alpha[\Delta\hd, \od]_{x_i} \text{ in } B_{\frac{5}{16}}. 
  \label{}
\end{equation}
We have
\begin{equation}
  \begin{split}
  \int_{\PP_{\frac{1}{4}}}|\nabla\hP|^{\frac{3}{2}}dxdt
  &\le C\int_{\PP_\frac{9}{32}}(|S_\alpha[\Delta\hd, \od]_{x_i}|^{\frac{3}{2}}+|\hP|^{\frac{3}{2}})dxdt
  \le C\int_{\PP_\frac{9}{32}}(|\nabla^3\hd|^{\frac{3}{2}}+|\hP|^{\frac{3}{2}})dxdt\\
  &\le C \int_{\PP_{\frac{9}{32}}}|\nabla^3\hd|^{2} dxdt+C\int_{\PP_{\frac{9}{32}}}|\hP|^{\frac{3}{2}}dxdt
  +C.
\end{split}
  \label{4.51}
\end{equation}
Now let $\eta$ be a cut-off function of $B_{\frac{9}{32}}$, i.e., $\eta\equiv 1$ in $B_{\frac{3}{8}}$. Then, by combining \eqref{4.49} and \eqref{4.51}, we obtain
\begin{equation}
\begin{split}
&\sup_{-(\frac{1}{4})^2\le t\le 0}\int_{B_{\frac{1}{4}}}(|\nabla\hu|^2+|\nabla\hd|^2)dx
+\int_{\PP_{\frac{1}{4}}}(|\nabla^2\hu|^2+|\nabla^3\hd|^2+|\nabla\hP|^{\frac{3}{2}})dxdt\\
&\le C\int_{\PP_{\frac{1}{2}}}(|\hu|^{3}+|\nabla\hu|^2+|\nabla^2\hd|^2+|\hP|^{\frac{3}{2}})dxdt+C.
	\label{eqn:linearfirstestimate}
	\end{split}
	\end{equation}
 It turns out that we can extend the energy method above to arbitary order. Here we sketch  the proof.  For nonnegative multiple indices $\beta, \gamma$ and $\delta$ such that $\gamma=\beta+\delta$ and $\delta$ is of order $1$, $|\beta|=k$, then $(\nabla^\beta\hu, \nabla^\gamma\hd, \nabla^\beta\hP)$ satisfies
 \begin{equation}
   \left\{
   \begin{array}{l}
     \pa_t(\nabla^\beta\hu)+\nabla(\nabla^\beta\hP)-\Delta(\nabla^\beta\hu)=-\nabla\cdot S_\alpha[\Delta(\nabla^\beta\hd), \od], \\
     \dv(\nabla^\beta\hu)=0, \\
     \pa_t(\nabla^\gamma\hd)-\Delta(\nabla^\gamma\hd)=T_\alpha[\nabla(\nabla^\gamma\hu), \od]. 
   \end{array}
   \right.
   \label{eqn:blowuphigheq}
 \end{equation}
 By differentiating $(\hP^{(1)}, \hP^{(2)})$ $(k-1)$ times we can estimate
 \begin{equation}
   \int_{\PP_{\frac{1}{2}}}|\nabla^{k-1}\hP^{(1)}|^2dxdt
   \le C\int_{\PP_{\frac{1}{2}}}|\nabla^{k+1}\hd|^2 dxdt,
   \label{}
 \end{equation}
and
\begin{equation}
\int_{\PP_{\frac{5}{16}}}|\nabla^l\hP^{(2)}|^{\frac{3}{2}}dxdt
\le C\int_{\PP_{\frac{1}{2}}}|\nabla^{k-1}\hP|^{\frac{3}{2}}dxdt
+C\int_{\PP_{\frac{1}{2}}}|\nabla^{k+1}\hd|^2 dxdt+C, \quad l=k, k+1.
\end{equation}
Multiplying \eqref{eqn:blowuphigheq}$_1$ by $(\nabla^\beta\hu)\eta^2$ and \eqref{eqn:blowuphigheq}$_3$ by $(\nabla^\gamma\hd)\eta^2$ and integrating the resulting equations over $B_\frac{1}{2}$, and by the same calculation and cancellation, we obtain
\begin{equation}
\begin{split}
  &\frac{d}{dt}\int_{B_{\frac{5}{16}}}(|\nabla^k\hu|^2+|\nabla^{k+1}\hd|^2)\eta^2 dx
  +\int_{B_{\frac{5}{16}}}(|\nabla^{k+1}\hu|^2+|\nabla^{k+2}\hd|^2)\eta^2 dx\\
  &\le C\int_{B_{\frac{5}{16}}}(|\nabla^k\hu|^2+|\nabla^{k+1}\hd|^2+|\nabla^{k-1}\hP^{(1)}|^2+|\nabla^{k-1}\hu|^{3}+|\nabla^{k}\hP^{(2)}|^{\frac{3}{2}}+|\nabla^{k+1}\hP^{(2)}|^{\frac{3}{2}})dx\\
  &\le C\int_{\PP_{\frac{1}{2}}}(|\nabla^k\hu|^2+|\nabla^{k+1}\hd|^2+|\nabla^{k-1}\hu|^{3}+|\nabla^{k-1}\hP|^{\frac{3}{2}})dxdt+C.
  \end{split}
  \label{4.56}
\end{equation}
For $P$, since 
\begin{equation}
  -\Delta(\nabla^\beta\hP)=\dv^2S_\alpha[\Delta(\nabla^\beta\hu), \od] \text{ in }B_{\frac{5}{16}},
  \label{}
\end{equation}
 we have 
 \begin{equation}
   \begin{split}
   \int_{\PP_{\frac{1}{4}}}|\nabla^k\hP|^{\frac{3}{2}}dxdt
   &
   \le C\int_{\PP_{\frac{9}{32}}}|\nabla^{k+2}\hd|^{\frac{3}{2}}dxdt
   +C\int_{\PP_{\frac{9}{32}}}|\nabla^{k-1}\hP|^{\frac{3}{2}}dxdt
   \\
   &\le C\int_{\PP_{\frac{9}{32}}}|\nabla^{k+2}\hd|^2dxdt
   +C\int_{\PP_{\frac{9}{32}}}|\nabla^{k-1}\hP|^{\frac{3}{2}}dxdt
   +C. 
 \end{split}
   \label{}
 \end{equation}
By choosing suitable $t_*$ as above, we can integrate \eqref{4.56} in $t$ to get
\begin{equation}
\begin{split}
  &\sup_{-(\frac{9}{32})^2\le t\le 0}\int_{B_{\frac{9}{32}}} (|\nabla^k\hu|^2+|\nabla^{k+1}\hd|^2)dx
  +\int_{\PP_{\frac{9}{32}}}(|\nabla^{k+1}\hu|^2+|\nabla^{k+2}\hd|^2)dxdt\\
 &\le C\int_{\PP_{\frac{1}{2}}}(|\nabla^k\hu|^2+|\nabla^{k+1}\hd|^2+|\nabla^{k-1}\hu|^3+|\nabla^{k-1}\hP|^{\frac{3}{2}})dxdt+C.
  \label{}
  \end{split}
\end{equation}
Thus, we get
\begin{equation}
\begin{split}
  &\sup_{-(\frac{1}{4})^2\le t\le 0}\int_{B_{\frac{1}{4}}}(|\nabla^k\hu|^2+|\nabla^{k+1}\hd|^2)dx
  +\int_{\PP_{\frac{1}{4}}}(|\nabla^{k+1}\hu|^2+|\nabla^{k+2}\hd|^2+|\nabla^k\hP|^{\frac{3}{2}})dxdt\\
  &\le C\int_{\PP_{\frac{1}{2}}}(|\nabla^{k-1}\hu|^3+|\nabla^{k}\hu|^2+|\nabla^{k+1}\hd|^2+|\nabla^{k-1}\hP|^{\frac{3}{2}})dxdt+C.
  \end{split}
  \label{4.60}
\end{equation}
From Sobolev's interpolation inequality, we have
\begin{equation*}
  \int_{\PP_{\frac{1}{2}}}|\nabla^{k-1}\hu|^{3}dxdt
  \le C\left\|\nabla^{k-1}\hu\right\|_{L_t^\infty L_x^2(\PP_{\frac{1}{2}})}^{6}+C \int_{\PP_{\frac{1}{2}}}(|\nabla^{k-1}\hu|^2+|\nabla^k\hu|^2)dxdt. 
\end{equation*}
Substituting this inequality in \eqref{4.60} and by suitable adjusting of the radius, we can show that
\begin{equation}
\begin{split}
  &\sup_{-(\frac{1}{4})^2\le t\le 0}\int_{B_{\frac{1}{4}}\times\{t\}}(|\nabla^k\hu|^2+|\nabla^{k+1}\hd|^2)dx+\int_{\PP_{\frac{1}{4}}}(|\nabla^{k+1}\hu|^2+|\nabla^{k+2}\hd|^2+|\nabla^k\hP|^{\frac{3}{2}})dxdt\\
  &\le  C\left(\big\|(\hu, \nabla \hd)\big\|_{L_t^\infty L_x^2\cap L_t^2 H_x^2(\PP_{\frac{1}{2}})}, \big\|\hP\big\|_{L^{\frac{3}{2}}(\PP_{\frac{1}{2}})}\right).
  \end{split}
  \label{4.61}
\end{equation}
With \eqref{4.61}, we can apply the regularity for both the linear Stokes equations  and the linear heat equation (c.f. \cite{ladyzhenska1968parabolic,temam1984ns}) to conclude that $(\hu, \hd)\in C^\infty(\PP_{\frac{1}{4}})$. Furthermore, applying the elliptic estimate for the pressure equation \eqref{eqn:hPeq}, we see that $\hP\in C^\infty(\PP_{\frac{1}{4}})$. Therefore $(\hu, \hd, \hP)\in C^\infty(\PP_{\frac{1}{4}})$ and the estimate \eqref{eqn:smoothestimate} holds. The proof is completed. 
\end{proof}

The oscillation Lemma admits the following iterations.
\begin{lemma}
  Let $(\u, \d, P), M, \varepsilon_0(M), \tau_0(M), C_0(M), z_0$ be as in  Lemma \ref{lemma:smallregularity}. Then there exist $r_0=r_0(M), \varepsilon_1=\varepsilon_1(M)>0$ such that for $0<r\le r_0$, if
	\begin{equation*}
	|\d_{z_0, r}|\le \frac{M}{2}, \qquad \Phi(z_0, r)\le \varepsilon_1^{3},
	\end{equation*}
	then  for any $k=1,2, \dots, $ we have 
	\begin{equation}
	\begin{array}[]{l}
	|\d_{z_0, \tau_0^{k-1}r}|\le M, \\
	\Phi(z_0, \tau_0^{k-1}r)\le \varepsilon_1^{3}, \\
	\Phi(z_0, \tau_0^k r)\le \frac{1}{2}\max\left\{ \Phi(z_0, \tau_0^{k-1}r), C_0 (\tau_0^{k-1}r)^{3} \right\}.
	\end{array}
	\label{}
	\end{equation}
	\label{lemma:iterative}
\end{lemma}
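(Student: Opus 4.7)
The plan is to prove all three assertions simultaneously by induction on $k$. Choose $\varepsilon_1 = \varepsilon_1(M) \in (0, \varepsilon_0(M)]$ and $r_0 = r_0(M) > 0$ small, with quantitative smallness to be fixed below. The base case $k=1$ is immediate: the hypotheses $|\d_{z_0,r}| \le M/2 \le M$ and $\Phi(z_0,r) \le \varepsilon_1^3 \le \varepsilon_0^3$ trigger Lemma~\ref{lemma:smallregularity} to produce the third assertion at $k=1$, while the first two assertions at $k=1$ are just the hypotheses.

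The technical heart is to establish a uniform geometric decay of $\phi_j := \Phi(z_0, \tau_0^j r)$. I would introduce the auxiliary quantity $b_j := \max\{\phi_j, C_0 \tau_0^{3j} r^3\}$. The third assertion at step $j$ gives $\phi_j \le \tfrac{1}{2} b_{j-1}$, while the comparison $C_0\tau_0^{3j}r^3 = \tau_0^3 \cdot C_0 \tau_0^{3(j-1)} r^3 \le \tfrac{1}{2} b_{j-1}$, valid because $\tau_0 < 1/2$ forces $\tau_0^3 < 1/2$, yields $b_j \le \tfrac{1}{2} b_{j-1}$. Choosing $r_0$ so that $C_0 r_0^3 \le \varepsilon_1^3$ ensures $b_0 \le \varepsilon_1^3$, hence $\phi_j \le b_j \le 2^{-j}\varepsilon_1^3$ throughout the induction range. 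This immediately furnishes the second assertion $\phi_{k-1} \le \varepsilon_1^3$ at each step.

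For the first assertion I would telescope, writing $|\d_{z_0,\tau_0^k r} - \d_{z_0,r}| \le \sum_{j=1}^k |\d_{z_0,\tau_0^j r} - \d_{z_0,\tau_0^{j-1} r}|$. Bounding each increment by $\fint_{\PP_{\tau_0^j r}(z_0)}|\d - \d_{z_0,\tau_0^{j-1}r}|\, dxdt$ and applying Jensen's inequality together with the domain-enlargement ratio $|\PP_{\tau_0^{j-1}r}|/|\PP_{\tau_0^j r}| = \tau_0^{-5}$ gives the bound $\tau_0^{-5/6} \phi_{j-1}^{1/3} \le \tau_0^{-5/6} \cdot 2^{-(j-1)/3} \varepsilon_1$ for each term. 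The resulting geometric series sums to $C(\tau_0)\varepsilon_1$ uniformly in $k$, so taking $\varepsilon_1$ small enough that $C(\tau_0)\varepsilon_1 \le M/2$ gives $|\d_{z_0,\tau_0^k r}| \le M$. With both the first and the second assertion at step $k+1$ now verified, one further application of Lemma~\ref{lemma:smallregularity} at radius $\tau_0^k r$ (whose hypotheses $|\d_{z_0,\tau_0^k r}| \le M$ and $\phi_k \le \varepsilon_1^3 \le \varepsilon_0^3$ are in place) delivers the third assertion at step $k+1$ and closes the induction.

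The main obstacle is that the raw recursion $\phi_j \le \tfrac{1}{2}\max\{\phi_{j-1}, C_0 \tau_0^{3(j-1)} r^3\}$ does not directly yield a geometric rate of decay of $\phi_j$: a straightforward induction gives only the uniform bound $\phi_j \le \varepsilon_1^3$, under which the telescoping series for the drift of $\d_{z_0,\tau_0^k r}$ diverges linearly in $k$. Absorbing the error term into the auxiliary sequence $b_j$ and exploiting $\tau_0^3 < 1/2$ restores a genuine geometric decay $b_j \le 2^{-j} b_0$, which is exactly what allows the bootstrap to close uniformly in $k$.
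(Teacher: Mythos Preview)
Your proposal is correct and follows essentially the same strategy as the paper: strong induction on $k$, geometric decay of $\Phi(z_0,\tau_0^j r)$ extracted from the one-step recursion of Lemma~\ref{lemma:smallregularity}, and a telescoping sum to control the drift of the averages $\d_{z_0,\tau_0^j r}$. The only cosmetic differences are that you package the iteration via the auxiliary majorant $b_j=\max\{\phi_j, C_0\tau_0^{3j}r^3\}$ (the paper unwinds the nested $\max$'s by hand to reach $\phi_k\le 2^{-k}\max\{\varepsilon_1^3, C_0 r_0^3/(1-2\tau_0^3)\}$), and you keep the domain-enlargement factor $\tau_0^{-5/6}$ explicit in the telescoping step where the paper absorbs it.
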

\begin{proof}
	We prove it by an induction on $k$. By translational invariance we may assume that $z_0=0$, and we denote $\d_r$ to be $\d_{0,r}$ for simplicity.  
	
	For $k=1$, the conclusion follows from Lemma \ref{lemma:smallregularity}, if we choose $\varepsilon_1$ such that $\varepsilon_1<\varepsilon_0$. Suppose the conclusion is true for all $k\le k_0, k_0\ge 1$, we show it remains true for $k=k_0+1$. By the inductive hypothesis
	\begin{equation*}
	\begin{array}[]{l}
	|\d_{ \tau^{k-1}_0 r}|\le M, \\
	\Phi(0, \tau_0^{k-1}r)\le \varepsilon_1^{3}, \\
	\Phi(0, \tau_0^k r)\le \frac{1}{2}\max\left\{ \Phi(0, \tau_0^{k-1}r), C_0(\tau_0^{k-1}r)^{3} \right\}\le \frac{1}{2}\max\left\{ \varepsilon_1^{3}, C_0 (\tau_0^{k-1}r)^{3} \right\}
	\end{array}
	\end{equation*}
	for all $k\le k_0$. Thus, 
	\begin{align*}
	\Phi(0, \tau_0^k r)&\le \frac{1}{2}\max\left\{ \Phi(0, \tau_0^{k-1}r), C_0(\tau_0^{k-1}r)^{3} \right\}\\
	&\le \frac{1}{2}\max\left\{ \frac{1}{2}\max\left\{ \Phi(0, \tau_0^{k-2}r), C_0(\tau_0^{k-2} r)^{3} \right\}, C_0(\tau_0^{k-1}r)^{3} \right\}\\
	&\le \cdots\le 2^{-k}\max\big\{ \Phi(0, r), \frac{C_0 r^{3}}{1-2\tau_0^{3}} \big\}\\
	&\le 2^{-k}\max\big\{ \varepsilon_1^{3}, \frac{C_0 r_0^{3}}{1-2\tau_0^{3}} \big\}, \quad \forall k\le k_0.
	\end{align*}
	Then
	\begin{align*}
	|\d_{\tau_0^{k_0}r}|&\le |\d_r|+\sum_{k=1}^{k_0}\Big|\d_{\tau_0^k}-\d_{\tau_0^{k-1}r}\Big|\\
	&\le \frac{M}{2}+\sum_{k=1}^{k_0}\Big( \fint_{\PP_{\tau_0^k r}(0)}|\d-\d_{\tau^{k-1}_0 r}|^{6} \Big)^{\frac{1}{6}}\\
	&\le\frac{M}{2}+\sum_{k=1}^{k_0} \Phi(0, \tau_0^{k-1}r)^{\frac{1}{3}} \\
	&\le \frac{M}{2}+\sum_{k=1}^{k_0} 2^{-\frac{1}{3}(k-1)} \max\big\{ \varepsilon_1, \big( \frac{C_0 r_0^{3}}{1-2\tau_0^{3}} \big)^{\frac{1}{3}} \big\}\\
	&\le 
	\frac{M}{2}+\frac{1}{1-2^{-\frac{1}{3}}}\max\big\{ \varepsilon_1, \big( \frac{C_0 r_0^{3}}{1-2\tau_0^{3}} \big)^{\frac{1}{3}} \big\}. 
	\end{align*}
	If we choose sufficiently small $r_0=r_0(M)$, $\varepsilon_1=\varepsilon_1(M)$, we see 
	\begin{equation*}
	\begin{array}[]{l}
	|\d_{\tau_0^{k_0} r}|\le M, \\
	\Phi(0, \tau_0^{k_0} r)\le \varepsilon_1^{3}\le \varepsilon_0^{3}.
	\end{array}
	\end{equation*}
	It follows directly from Lemma \ref{lemma:smallregularity} with $r$ replaced by $\tau_0^kr$ that
	\begin{equation*}
	\Phi(0, \tau_0^{k+1}r)\le \frac{1}{2}\max\left\{ \Phi(0, \tau_0^k r), C_0 (\tau_0^kr)^{3} \right\}.
	\end{equation*}
This completes the proof. \end{proof}
The local boundedness of the solutions can be obtained by utilizing the Riesz potential estimates between
Morrey spaces as in the following lemma.
\begin{lemma}
  For any $M>0$, there exists $\varepsilon_2>0$, depending on $M$, such that if $(\u, \d, P)$ is a suitable weak solution of \eqref{eqn:ELmodel1} in 
  ${\R^3}\times(0,\infty)$, which satisfies, for $z_0=(x_0, t_0)\in {\R^3}\times(r_0^2, \infty)$ 
  \begin{equation}
    |\d_{z_0, r_0 }|\le \frac{M}{4}, \text{ and }\Phi(z_0, r_0)\le \varepsilon_2^{3},
    \label{}
  \end{equation}
  then for any $1<p<\infty$, $(\u, \nabla\d)\in L^p(\PP_{\frac{r_0}{4}}(z_0))$, 
  {{$\d\in C^\theta(\PP_{\frac{r_0}{2}}(z_0))$}} and
 \begin{equation}
 {{|\d|\le M \text{ in }\PP_{\frac{r_0}{2}}(z_0), \quad	[\d]_{C^\theta(\PP_{\frac{r_0}{2}}(z_0))}\le C(\theta,  M)(\varepsilon_1+r_0). }}
 \end{equation}
    \begin{equation}
    \left\|(\u, \nabla\d)\right\|_{L^p(\PP_{\frac{r_0}{4}}(z_0))}\le C(p, M)(\varepsilon_1+r_0),
    \label{eqn:dHolder}
  \end{equation}
  where $\varepsilon_1$ is the constant in Lemma \ref{lemma:iterative}.
  \label{lemma:localboundedness}
\end{lemma}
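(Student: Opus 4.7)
The plan is to first propagate Lemma \ref{lemma:iterative}'s iterative decay from the single point $z_0$ to every point of a parabolic neighborhood, then extract Hölder continuity of $\d$ via Campanato's characterization, and finally bootstrap the Morrey membership of $(\u, \nabla\d)$ to arbitrary $L^p$ integrability by Riesz potential estimates between parabolic Morrey spaces. By translation set $z_0 = 0$. For $\varepsilon_2$ small depending on $M$, every $z \in \PP_{r_0/2}(0)$ should satisfy the hypotheses of Lemma \ref{lemma:iterative} at radius $r_0/2$: the enclosure $\PP_{r_0/2}(z) \subset \PP_{r_0}(0)$ yields $\Phi(z, r_0/2) \le C \Phi(0, r_0) \le C\varepsilon_2^3 \le \varepsilon_1^3$, and Hölder's inequality gives $|\d_{z, r_0/2} - \d_{0, r_0}| \le C\bigl(\fint_{\PP_{r_0}(0)}|\d - \d_{0,r_0}|^6\bigr)^{1/6} \le C\varepsilon_2$, so $|\d_{z, r_0/2}| \le M/2$.

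Writing $\rho = \tau_0^k(r_0/2)$, so that $2^{-k} = (\rho/r_0)^\beta$ with $\beta = \log 2/\log \tau_0^{-1} > 0$, Lemma \ref{lemma:iterative} iterated at every such $z$ yields
\[
\Phi(z, \rho) \le C(M)(\rho/r_0)^\beta (\varepsilon_1 + r_0)^3 \qquad \text{for all } z \in \PP_{r_0/2}(0),\ 0 < \rho \le r_0/2.
\]
Isolating the $L^6$-oscillation piece gives $\bigl(\fint_{\PP_\rho(z)}|\d - \d_{z, \rho}|^6\bigr)^{1/6} \le C(\rho/r_0)^{\beta/3}(\varepsilon_1 + r_0)$. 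By Campanato's characterization (parabolic dimension $5$), this places $\d \in C^\theta(\PP_{r_0/2}(0))$ with $\theta = \beta/3$ and the claimed seminorm bound. Summing the telescoping differences $|\d_{z, \tau_0^k r_0/2} - \d_{z, \tau_0^{k-1} r_0/2}|$, whose sizes decay geometrically by the same estimate, and combining with $|\d_{0, r_0}| \le M/4$, yields $|\d| \le M$ on $\PP_{r_0/2}(0)$ after shrinking $\varepsilon_2$ and $r_0$ if necessary.

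For the $L^p$ bound on $(\u, \nabla\d)$, the same decay places $|\u|^3 + |\nabla\d|^3$ in the parabolic Morrey space $M^{1, 2+\beta}$, uniformly on $\PP_{r_0/2}(0)$. I would then invoke the parabolic analogue of Adams' Riesz potential theorem --- if $f \in M^{p, \lambda}$ with $\lambda < n = 5$, then $I_\alpha f \in M^{q, \lambda}$ for $1/q = 1/p - \alpha/(n-\lambda)$ --- applied to the Duhamel formulas for $\u$ (Stokes semigroup after a Helmholtz projection) and for $\nabla\d$ (heat semigroup), with forcing terms quadratic in $(\u, \nabla\d, P, \d)$ and multiplied by the now Hölder-continuous coefficient $\d$ from the previous step. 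Each application strictly raises the integrability exponent, and iterating finitely many times delivers $(\u, \nabla\d) \in L^p(\PP_{r_0/4}(0))$ for every $1 < p < \infty$, with the bound scaling as $\varepsilon_1 + r_0$.

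The main obstacle is the pressure, governed elliptically by $-\Delta P = \dv^2(\u\otimes\u + \nabla\d\odot\nabla\d + S_\alpha[\Delta\d, \d])$, where the Leslie stress couples the singular factor $\Delta\d$ with $\d$ and so obstructs a naive application of Riesz potential bounds. To preserve the $(\varepsilon_1 + r_0)$-scaling through the bootstrap I would split $P = P^{(1)} + P^{(2)}$ in the manner of the proof of Lemma \ref{lemma:smallregularity}: $P^{(1)}$ is a Calder\'on--Zygmund singular integral of the localized nonlinearities and inherits Morrey decay from the factors, while $P^{(2)}$ is spatially harmonic, hence smooth on inner balls and controlled by its crude global $L^{3/2}$-bound. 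Combined with parabolic Calder\'on--Zygmund theory for the Stokes semigroup, this decomposition closes each bootstrap step with the scaling stated in the lemma.
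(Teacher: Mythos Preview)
Your first two steps --- propagating the smallness hypothesis from $z_0$ to every $z\in\PP_{r_0/2}(z_0)$, then extracting $\d\in C^\theta$ and $|\d|\le M$ from Campanato's characterization --- match the paper exactly.

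The gap is in the $L^p$ bootstrap. You write that the Duhamel forcings are ``quadratic in $(\u,\nabla\d,P,\d)$'', but this is precisely where the kinematic transport terms bite: the $\d$--equation carries $T_\alpha[\nabla\u,\d]$, which involves $\nabla\u$, and the $\u$--equation (in divergence form) carries $S_\alpha[\Delta\d,\d]$, which involves $\Delta\d$. Neither of these is controlled by the Morrey decay of $\Phi$ alone, which only gives $(\u,\nabla\d)\in M^{3,3(1-\alpha)}$ and $P\in M^{3/2,3(1-\alpha)}$. The paper closes this by inserting an intermediate step you omit: once $|\d|\le M$ is known on $\PP_{r_0/2}(z_0)$, one revisits the local energy inequality with a cutoff $\phi^2$ supported in $\PP_{2s}(z)$ and reads off
\[
s^{-1}\int_{\PP_s(z)}(|\nabla\u|^2+|\Delta\d|^2)\,dxdt\;\le\;C(r_0^3+\varepsilon_1^3)\Big(\frac{s}{r_0}\Big)^{2\theta_0},
\]
so that $(\nabla\u,\nabla^2\d)\in M^{2,4-2\alpha}(\PP_{r_0/2}(z_0))$. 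With this in hand, $T_\alpha[\nabla\u,\d]$ and $S_\alpha[\Delta\d,\d]$ land in $M^{3/2,\lambda}$ for some $\lambda<3$, and a \emph{single} application of the Riesz potential estimate (taking the Morrey parameter close to its critical value) already gives $(\u,\nabla\d)\in L^p$ for every $p$ --- no finite iteration is needed.

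Two smaller points. The pressure decomposition $P=P^{(1)}+P^{(2)}$ you invoke is the device used in Lemmas~\ref{lemma:smallregularity} and~\ref{lemma:blowupregularity}, not here; in the present lemma the pressure enters only through its Morrey bound inherited from $\Phi$, and the Stokes problem for the auxiliary field $\vv$ is set up so that the Oseen kernel estimate $|\vv|\le C\,\mathcal I_1(|X|)$ bypasses the pressure entirely. Also, the Leslie stress term $S_\alpha[\Delta\d,\d]$ appears in the \emph{forcing} of the Stokes problem, not merely in the pressure equation, so your proposed pressure split would not by itself resolve the $\Delta\d$ obstruction.
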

\begin{proof}
  Let $\varepsilon_2=\min\big\{ \big( \frac{M}{4} \big), 2^{-\frac{11}{6}}\varepsilon_1(M) \big\}$. 
  For any $z\in \PP_{\frac{r_0}{2}}(z_0)$,
  \begin{align*}
    |\d_{z, \frac{r_0}{2}}|&\le \big|\d_{z, \frac{r_0}{2}}-\d_{z_0, r_0}\big|+|\d_{z_0, r_0}|\\
    &\le \fint_{\PP_{\frac{r_0}{2}}(z)}|\d-\d_{z_0, r_0}|+\frac{M}{4}
    \le \varepsilon_2+\frac{M}{4}\le \frac{M}{2}. 
  \end{align*}
  Meanwhile, 
  \begin{equation*}
  \begin{split}
    &\big( \fint_{\PP_{\frac{r_0}{2}}(z)}|\d-\d_{z, \frac{r_0}{2}}|^{6}dxdt \big)^{\frac{1}{2}}\\
    &\le \big(2^{5}\fint_{\PP_{\frac{r_0}{2}}(z)}|\d-\d_{z_0, r_0}|^{6}dxdt+2^{5}|\d_{z_0, r_0}-\d_{z, \frac{r_0}{2}}|^{6} \big)^{\frac{1}{2}}\\
    &\le\big( 2^{10}\fint_{\PP_{r_0}(z_0)}|\d-\d_{z_0, r_0}|^{6}dxdt+2^{5}\fint_{\PP_{\frac{r_0}{2}(z)}}|\d-\d_{z_0, r_0}|^{6}dxdt \big)^{\frac{1}{2}}\\
    &\le 2^{\frac{11}{2}} \big( \fint_{\PP_{r_0}(z_0)}|\d-\d_{z_0, r_0}|^{6}dxdt \big)^{\frac{1}{2}},
    \end{split}
    \label{}
  \end{equation*}
  Hence we get that
  \begin{equation*}
    \Phi(z, \frac{r_0}{2})\le 2^{\frac{11}{2}}\Phi(z_0, r_0)\le 2^{\frac{11}{2}} \varepsilon_2^3\le \varepsilon_1^{3}.
  \end{equation*}
  Then we deduce from Lemma \ref{lemma:iterative} that for any $k=1,2, \dots$, 
  \begin{equation}
    \begin{array}[]{l}
      |\d_{z, \tau_0^{k-1}\frac{r_0}{2}}|\le M,\\ 
      \Phi(z, \tau_0^k \frac{r_0}{2})\le \frac{1}{2}\max\left\{ \Phi(z, \tau_0^{k-1}\frac{r_0}{2}), C_0(\tau_0^{k-1}r)^{3} \right\}.
    \end{array}
    \label{}
  \end{equation}
  By Lebesgue's differentiation theorem, we have $|\d|\le M$ a.e. in $\PP_{\frac{r_0}{2}}(z_0)$. Furthermore, we have
  \begin{equation*}
    \Phi(z, \frac{\tau_0^k r_0}{2})\le 2^{-k}\max\Big\{ \Phi(z, \frac{r_0}{2}), \frac{C_0 r_0^{3}}{1-2\tau_0^{3}} \Big\}.
  \end{equation*}
  Therefore for $\theta_0=\frac{\ln 2}{3|\ln \tau_0|}\in (0, \frac{1}{3})$, it holds for any $0<s<\frac{r_0}{2}$ and $z\in\PP_{\frac{r_0}{2}}(z_0)$, 
{{  \begin{equation}
    \Phi(z, s)\le C(r_0^3+\varepsilon_1^{3})\big( \frac{s}{r_0} \big)^{3\theta_0}. 
    \label{eqn:decay1}
  \end{equation}}}
  By the Campanato theory, $\d\in C^\theta(\PP_{\frac{r_0}{4}}(z_0))$ and \eqref{eqn:dHolder} holds.
 Now for $\p \in C_0^\infty(\PP_{\frac{r_0}{2}})(z_0)$, from \eqref{eqn:localueq}, \eqref{eqn:localdeq} we can derive the following local energy inequality:
  \begin{equation}
    \begin{split}
      &\frac{1}{2}\int_{\R^3}(|\u|^2+|\nabla\d|^2)\p(x,t)dx+\int_{0}^{t}\int_{\R^3}(|\nabla\u|^2+|\Delta\d|^2)\p(x,s)dxds\\
      &\le\int_{0}^{t}\int_{\R^3}\frac{1}{2}(|\u|^2+|\nabla\d|^2)(\pa_t\p+\Delta\p)(x,s)dxds\\
      &+\int_{0}^{t}\int_{\R^3}[\frac{1}{2}(|\u|^2+2P)\u\cdot \nabla\p+\nabla\d\odot\nabla\d:\u\otimes\nabla\p](x,s)dxds\\
      &+\int_{0}^{t}\int_\R^3 (\nabla\d\odot\nabla\d-|\nabla\d|^2I_3):\nabla^2\p(x,s)dxds\\
      &+\int_{0}^{t}\int_{\R^3}[S_\alpha[\Delta\d, \d]:\u\otimes\nabla\p+T_\alpha[\nabla\u, \d]\cdot (\nabla\p\cdot \nabla\d)](x,s)dxds\\
      &+\int_{0}^{t}\int_{\R^3}\nabla\cdot S_\alpha[\f(\d), \d]\cdot \u\p(x,s)dxds\\
      &+\int_{0}^{t}\int_{\R^3}(\u\cdot \nabla\d)\cdot \f(\d)\p(x,s)dxds-\int_{0}^{t}\int_{\R^3}\nabla\f(\d):\nabla\d\p(x,s)dxds.
    \end{split}
    \label{eqn:LocalenergywithoutF}
  \end{equation}
 Let $\p\in C_0^\infty(\PP_{2s}(z))$ be a cut-off function of $\PP_s(z)$.  Replacing $\p$ by $\p^2$ in \eqref{eqn:LocalenergywithoutF}, we can show that for $0<s<\frac{r_0}{2}$, 
  \begin{equation}
    \begin{split}
      &s^{-1}\int_{\PP_s(z)}(|\nabla\u|^2+|\Delta\d|^2)dxdt\\
      &\le C[(2s)^{-3}\int_{\PP_{2s}(z)}(|\u|^2+|\nabla\d|^2)dxdt
      +(2s)^{-2}\int_{\PP_{2s}(z)}(|\u|^3+|\nabla\d|^3+|P|^{\frac{3}{2}})dxdt]\\
      &\le C(r_0^3+\varepsilon_1^3)\big( \frac{s}{r_0} \big)^{2\theta_0}. 
    \end{split}
    \label{eqn:decay2}
  \end{equation}

    Now we are ready to perform the Riesz potential estimate.
    For any open set $U\subset{\R^3}\times\R, 1\le p<\infty$, define the Morrey space $M^{p,\lambda}(U)$ by
    \begin{equation*}
      M^{p,\lambda}(U):=\left\{ f\in L_{\loc}^p(U): \left\|f\right\|_{M^{p,\lambda}(U)}^p=\sup_{z\in U, r>0}r^{\lambda-5}\int_{\PP_r(z)}|f|^p dxdt<\infty \right\}. 
    \end{equation*}
    It follows from \eqref{eqn:decay1} and \eqref{eqn:decay2} that there exists $\alpha\in(0,1)$ such that 
    \begin{equation*}
      (\u, \nabla\d)\in M^{3, 3(1-\alpha)}\big( \PP_{\frac{r_0}{2}}(z_0) \big), 
      P\in M^{\frac{3}{2}, 3(1-\alpha) }\big( \PP_{\frac{r_0}{2}}(z_0) \big),
      (\nabla\u, \nabla^2\hd)\in M^{2, 4-2\alpha}\big( \PP_{\frac{r_0}{2}}(z_0) \big). 
    \end{equation*}
    Write $\d$ equation in \eqref{eqn:ELmodel1} as
    \begin{equation}
      \pa_t\d-\Delta\d=-\u\cdot \nabla\d+T_\alpha[\nabla\u,\d]-\f(\d) \in M^{\frac{3}{2}, 3(1-\alpha)}\big( \PP_{\frac{r_0}{2}}(z_0) \big).
      \label{}
    \end{equation}
    Let $\eta\in C_0^\infty(\R^4)$ be such that $0\le \eta\le 1$, $\eta=1$ in $\PP_{\frac{r_0}{2}}(z_0)$, $|\pa_t\eta|+|\nabla^2\eta|\le Cr_0^2$.  Set $\w=\eta^2(\d-\d_{z_0, \frac{r_0}{2}})$. Then 
    \begin{equation}
      \pa_t \w-\Delta \w=F, \quad F:=\eta^2 (\pa_t\d-\Delta\d)+(\pa_t\eta^2-\Delta\eta^2)(\d-\d_{z_0, \frac{r_0}{2}})-2\nabla\eta^2\cdot \nabla\d. 
      \label{}
    \end{equation}
    We can check that $F\in M^{\frac{3}{2}, 3(1-\alpha)}(\R^4)$ and satisfies
    \begin{equation}
      \left\|F\right\|_{M^{\frac{3}{2}, 3(1-\alpha)}(\R^4)}\le C(r_0+\varepsilon_1). 
      \label{}
    \end{equation}
    Let $\Gamma$ denote the heat kernel in ${\R^3}$.  Then 
\begin{equation*}
  |\nabla\Gamma(x, t)|\le C\delta^{-4}( (x, t), (0, 0)), \forall (x,t)\neq (0, 0),
\end{equation*}
where $\delta(\cdot , \cdot )$ denotes the parabolic distance on $\R^4$. By the Duhamel formula, we have that 
\begin{equation}
  |\w(x, t)|\le \int_{0}^{t}\int_{\R^3}|\nabla\Gamma(x-y, t-s)||F(y,s)|dyds\le C\mathcal{I}_1(|F|)(x,t), 
  \label{}
\end{equation}
where $\mathcal{I}_\beta$ is the parabolic Riesz potential of order $\beta$ on $\R^4$, $0\le\beta\le 5$, defined by 
\begin{equation*}
  \mathcal{I}_\beta(g)(x,t)=\int_{\R^4}\frac{|g(y,s)|}{\delta^{5-\beta}( (x,t), (y,s))}dyds, \forall g\in L^2(\R^4).
\end{equation*}
Applying the Riesz potential estimates \cite{HuangWang2010Note}, we conclude that $\nabla \w\in M^{\frac{3(1-\alpha)}{1-2\alpha},3(1-\alpha)}(\R^4)$ and 
\begin{equation}
  \left\|\nabla \w\right\|_{M^{\frac{3(1-\alpha)}{1-2\alpha}, 3(1-\alpha)}(\R^4)} \le C\left\|F\right\|_{M^{\frac{3}{2}, 3(1-\alpha)}(\R^4)}\le C(r_0+\varepsilon_1).
  \label{}
\end{equation}
Since $\lim_{\alpha\uparrow \frac{1}{2}}\frac{3(1-\alpha)}{1-2\alpha}=\infty$, we conclude that for any $1<p<\infty$, $\nabla \w\in L^p(\PP_{r_0}(z_0))$ and 
\begin{equation*}
  \left\|\nabla \w\right\|_{L^p(\PP_{\frac{r_0}{2}}(z_0))}\le C(p) (r_0+ \varepsilon_1).
\end{equation*}
Since $\d-\w$ solves 
\begin{equation*}
  \pa_t(\d-\w)-\Delta(\d-\w)=0 \text{ in }\PP_{\frac{r_0}{4}(z_0)},
\end{equation*}
it follows from the theory of heat equations that  $\nabla(\d-\w)\in L^\infty(\PP_{\frac{r_0}{4}}(z_0))$. Therefore 
for any $1<p<\infty$, $\d\in L^p(\PP_{\frac{r_0}{4}}(z_0)$, and  
\begin{equation*}
  \left\|\nabla\d\right\|_{L^p(\PP_{\frac{r_0}{4}}(z_0))}\le C(p)(r_0+\varepsilon_1). 
\end{equation*}

We now proceed with the estimation of $\u$. Let $\vv:{\R^3}\times(0,\infty)\mapsto{\R^3}$ solve the Stokes equation:
\begin{equation}
  \left\{
  \begin{array}{l}
    \pa_t\vv-\Delta\vv+\nabla P=-\dv[\eta^2(\u\otimes\u+\nabla\d\odot\nabla\d-\frac{1}{2}|\nabla\d|^2I_3)]\\
    \qquad\qquad\qquad\qquad+\dv\{\eta^2(F(\d)-F(\d)_{z_0, \frac{r_0}{2}})I_3\}\\
    \qquad\qquad\qquad\qquad-\dv\{\eta^2( S_\alpha[\Delta\d-\f(\d),\d]+S_\alpha[\f(
    \d), \d]_{z_0, \frac{r_0}{2}})\},\\
    \nabla\cdot \vv=0, \\
    \vv(\cdot , 0)=0.
  \end{array}
  \right.
  \label{}
\end{equation}
By using the Oseen kernel, an estimate of $\vv$ can be given by 
\begin{equation}
  |\vv(x,t)|\le C \mathcal{I}_1(|X|)(x, t), \forall (x,t)\in{\R^3}\times(0,\infty),
  \label{}
\end{equation}
where
\begin{equation*}
\begin{split}
  X=\eta^2\big[ \u\otimes\u+\big( \nabla\d\odot\nabla\d-\frac{1}{2}|\nabla\d|^2I_3 \big)-(F(\d)-F(\d)_{z_0, \frac{r_0}{2}})I_3\\
 +S_\alpha[\Delta\d-\f(\d),\d]+S_\alpha[\f(\d),\d]_{z_0, \frac{r_0}{2}} \big].
 \end{split}
\end{equation*}
As above, we can check that $X\in M^{\frac{3}{2}, 3(1-\alpha)}(\R^4)$ and 
\begin{equation*}
  \left\|X\right\|_{M^{\frac{3}{2}, 3(1-\alpha)}(\R^4)}\le C(r_0+\varepsilon_1).
\end{equation*}
Hence we conclude that $\vv\in M^{\frac{3(1-\alpha)}{1-2\alpha}, 3(1-\alpha)}(\R^4)$, and
\begin{equation}
  \left\|\vv\right\|_{M^{\frac{3(1-\alpha)}{1-2\alpha}, 3(1-\alpha)}(\R^4)}\le C\left\|X\right\|_{M^{\frac{3}{2}, 3(1-\alpha)}(\R^4)}\le C(r_0+\varepsilon_1). 
  \label{}
\end{equation}
As $\alpha\uparrow\frac{1}{2}$, $\frac{3(1-\alpha)}{1-2\alpha}\to \infty$,  we conclude that for any $1<p<\infty$, $\vv\in L^p(\PP_{\frac{r_0}{2}}(z_0))$.
Since
\begin{equation*}
  \pa_t(\u-\vv)-\Delta(\u-\vv)+\nabla P=0, \dv(\u-\vv)=0 \text{ in }\PP_{\frac{r_0}{2}(z_0)},
\end{equation*}
we have that $\u-\vv\in L^\infty(\PP_{\frac{r_0}{4}}(z_0))$. Therefore for any $1<p<\infty$, $\u\in L^p(\PP_{\frac{r_0}{4}}(z_0))$ and 
\begin{equation*}
  \left\|\u\right\|_{L^p(\PP_{\frac{r_0}{4}}(z_0))}\le C(p) (r_0+\varepsilon_1).
\end{equation*} 
\end{proof}

For the rest of this section, we will establish the higher order regularity of $\eqref{eqn:ELmodel1}$. Again we prove it via a high order energy method which has been employed by Huang--Lin--Wang \cite{HuangLinWang2014EricksenLeslieTwoDimension} for general Ericksen--Leslie systems in dimension two, and Du--Hu--Wang \cite{du2019suitable} for co-rotational Beris--Edwards model in dimension three. 
\begin{lemma}
  Under the same assumption as Lemma \ref{lemma:localboundedness}, we have that for any $k\ge 0$, $(\nabla^k\u, \nabla^{k+1}\d)\in (L_t^\infty L_x^2\cap L_t^2 H_x^1)\big( \PP_{\frac{1+2^{-(k+1)}}{2}r_0}(z_0) \big)$ and the following estimates hold
  \begin{equation}
    \begin{split}
      &\sup_{t_0-\big( \frac{1+2^{-(k+1)}}{2}r_0 \big)^2\le t\le t_0}\int_{B_{\frac{1+2^{-(k+1)}}{2}r_0}(x_0)}(|\nabla^k\u|^2+|\nabla^{k+1}\d|^2)dx\\
      &\quad+\int_{\PP_{\frac{1+2^{-(k+1)}}{2}r_0}(z_0)}\left( |\nabla^{k+1}\u|^2+|\nabla^{k+2}\d|^2 +|\nabla^k P|^{\frac{3}{2}}\right)dxdt\\
      &\le C(k, r_0)\varepsilon_1.
    \end{split}
    \label{eqn:HigherOrderEnergy}
  \end{equation}
  In particular, $(\hu, \hd)$ is smooth in $\PP_{\frac{r_0}{4}}(z_0)$. 
  \label{lemma:HigherOrderRegularity}
\end{lemma}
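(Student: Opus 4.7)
The plan is to prove the lemma by induction on $k$, mimicking the scheme of Lemma \ref{lemma:blowupregularity} but with genuinely nonlinear terms handled via the $L^p$ bounds of Lemma \ref{lemma:localboundedness}. The shrinking radii $\frac{1+2^{-(k+1)}}{2}r_0$ provide exactly the room needed to absorb cut-off derivatives at each inductive step.

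For the base case $k=0$, we test the local energy inequality \eqref{eqn:localeng} against the square of a cut-off supported in $\PP_{3r_0/4}(z_0)$ and identically one on $\PP_{r_0/2}(z_0)$. The good terms $|\nabla\u|^2 + |\Delta\d|^2 + |\f(\d)|^2$ supply the dissipation; every remainder on the right-hand side is a product of $\u, \nabla\d, P$ and a bounded expression in $\d$, all of which are controlled by H\"older's inequality thanks to Lemma \ref{lemma:localboundedness}, which provides $\u, \nabla\d \in L^p$ for every finite $p$, $|\d|\le M$ on $\PP_{r_0/2}(z_0)$, and (via the Poisson equation for $P$ and Calder\'on--Zygmund) $P\in L^{3/2}$ and better.

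For the inductive step, assume \eqref{eqn:HigherOrderEnergy} for all orders $0,\ldots,k-1$, fix a multi-index $\beta$ with $|\beta|=k$, and set $\gamma=\beta+\delta$ with $|\delta|=1$. Applying $\nabla^\beta$ to the $\u$ and $P$ equations and $\nabla^\gamma$ to the $\d$ equation of \eqref{eqn:ELmodel1} produces a system whose principal part is exactly \eqref{eqn:blowuphigheq}, but whose right-hand side carries remainders $R^\u_\beta, R^\d_\gamma$: finite sums of multi-linear expressions in $\nabla^{j_1}\u, \nabla^{j_2}\d, \nabla^{j_3}\f(\d)$ of total order at most $k+1$ with at least one factor of strictly sub-leading order. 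We multiply the differentiated $\u$ equation by $\nabla^\beta\u\,\eta^2$ and the differentiated $\d$ equation by $\nabla^\gamma\d\,\eta^2$ with $\eta$ a cut-off between $\frac{1+2^{-(k+1)}}{2}r_0$ and $\frac{1+2^{-k}}{2}r_0$, integrate, and add. The leading Leslie and transport contributions cancel via \eqref{eqn:keycancell} applied to $\nabla^\beta\u$ and $\nabla^\beta\d$,
\begin{equation*}
\int S_\alpha[\Delta(\nabla^\beta\d),\d]:\nabla(\nabla^\beta\u)\eta^2\,dx
= \int T_\alpha[\nabla(\nabla^\beta\u),\d]\cdot \Delta(\nabla^\beta\d)\eta^2\,dx,
\end{equation*}
and the pressure is split $\nabla^\beta P = \hP^{(1)}+\hP^{(2)}$ as in Lemma \ref{lemma:blowupregularity}, giving an $L^2$ estimate of $\hP^{(1)}$ in terms of $\|\nabla^{k+1}\d\|_{L^2}$ plus induction-controllable quantities and an interior $L^{3/2}$ estimate of $\hP^{(2)}$ from its spatial harmonicity. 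A Fubini selection of a good initial slice $t_\ast$ as in the derivation of \eqref{4.49}, followed by Gr\"onwall, closes the energy bound at step $k$.

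The principal obstacle is the systematic control of the remainders $R^\u_\beta, R^\d_\gamma$: typical summands are $\nabla^{j_1}\d\cdot\nabla^{j_2+2}\d$ with $j_1+j_2=k-1$ coming from $\nabla^\beta(\nabla\d\cdot(\Delta\d-\f(\d)))$, the cubic contributions from $\nabla^\beta\f(\d)$, the commutators obtained by differentiating $\d$ inside $S_\alpha[\cdot,\d]$ and $T_\alpha[\cdot,\d]$, and the convective commutators from $\nabla^\beta(\u\cdot\nabla\u)$ and $\nabla^\gamma(\u\cdot\nabla\d)$. Each such term is handled by placing the factor of maximal order in $L^2$ (provided by the induction hypothesis at step $k-1$) and the remaining factors in $L^p$ with $p$ large (furnished by Lemma \ref{lemma:localboundedness} at order zero, and by Sobolev embedding of the induction hypothesis at intermediate orders), after which Young's inequality absorbs the top-order factor into the dissipation. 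Once the induction closes, $(\u,\d)\in \bigcap_{k\ge 0}(L^\infty_t H^k_x \cap L^2_t H^{k+1}_x)(\PP_{r_0/4}(z_0))$; parabolic Sobolev embedding then gives $(\u,\d)\in C^\infty(\PP_{r_0/4}(z_0))$, and the elliptic equation for $P$ upgrades $P$ to $C^\infty$ as well.
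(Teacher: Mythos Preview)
Your proposal is correct and follows essentially the same route as the paper: induction on $k$, energy estimates obtained by testing the $k$-times differentiated system against $(\nabla^k\u,\nabla^{k+1}\d)\eta^2$ with the $S_\alpha$/$T_\alpha$ leading terms cancelled via \eqref{eqn:keycancell}, the pressure split into a Calder\'on--Zygmund piece and a harmonic piece, and the lower-order commutator remainders controlled through the $L^p$ bounds of Lemma \ref{lemma:localboundedness} combined with Sobolev interpolation of the inductive hypothesis, the whole closed by Gr\"onwall. The paper carries out the same computation in detail (see \eqref{eqn:HighEn1}--\eqref{eqn:highNonEnergy}), with the only cosmetic difference that it applies Gr\"onwall directly rather than first invoking a Fubini-selected good time slice.
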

\begin{proof}
  For simplicity, assume $z_0=(0, 0)$ and $r_0=2$. \eqref{eqn:HigherOrderEnergy} can be proved by an induction on $k$. It is clear that when $k=0$, \eqref{eqn:HigherOrderEnergy} follows directly from the local energy inequality \eqref{eqn:LocalenergywithoutF}. Here we indicate to how to proof \eqref{eqn:HigherOrderEnergy} for $k\ge 1$. Suppose that \eqref{eqn:HigherOrderEnergy} holds for $k\le l-1$, we want to show that \eqref{eqn:HigherOrderEnergy} also holds for $k=l$. From the
  induction hypothesis, we have that for $0\le k\le l-1$, 
  \begin{equation}
  \begin{split}
    &\sup_{-\left( 1+2^{-(k+1)} \right)^2\le t\le 0}\int_{B_{1+2^{-(k+1)}}}(|\nabla^k\u|^2+|\nabla^{k+1}\d|^2)dx\\
    &+\int_{\PP_{1+2^{-(k+1)}}}\left( |\nabla^{k+1}\u|+|\nabla^{k+2}\d|^2+|\nabla^k P|^{\frac{3}{2}} \right)dxdt\le C(l) \varepsilon_1. 
    \end{split}
    \label{4.79}
  \end{equation}
  Hence by the Sobolev embedding we have
  \begin{equation}
    \int_{\PP_{1+2^{-l}}}(|\nabla^{l-1}\u|^{\frac{10}{3}}+|\nabla^{l}\d|^{\frac{10}{3}})dxdt
    \le C(l)\varepsilon_1,
        \label{}
  \end{equation}
and for $0\le k\le l-2$, by the Sobolev-interpolation inequality as in \eqref{eqn:L10interpolation} we have
\begin{equation}
  \int_{\PP_{1+2^{-(k+1)}}}(|\nabla^{k}\u|^{10}+|\nabla^{k+1}\d|^{10})dxdt\le C(l)\varepsilon_1.
  \label{}
\end{equation}
Also, for $1\le j\le l-1$, we have
\begin{equation}
\begin{split}
  &\int_{-(1+2^{-j})^2}^0 \left\|(\nabla^j\u, \nabla^{j+1}\d)\right\|_{L^3(B_{1+2^{-j}})}^4dt\\
  &\le \int_{-(1+2^{-j})^2}^{0}\left\|(\nabla^j\u, \nabla^{j+1}\d)\right\|^2_{L^2(B_{1+2^{-j}})}\left\|(\nabla^{j}\u, \nabla^{j+1}\d)\right\|^2_{L^6(B_{1+2^{-j}})} dt\\
  &\le  \left\|(\nabla^j\u, \nabla^{j+1}\d)\right\|_{L_t^\infty L_x^2(\PP_{1+2^{-j}})}^2 \left\|(\nabla^j\u, \nabla^{j+1}\d)\right\|_{L_t^2 H_x^1(\PP_{1+2^{-j}})}^2 \le C(l)\varepsilon_1 
  \label{}
  \end{split}
\end{equation}
By Lemma \ref{lemma:localboundedness} we also have that any $i\in \mathbb{N}^+$ and $1<p<\infty$, 
  \begin{equation}
  \begin{split}
 & \|\d\|_{L^\infty(\PP_2)}\le M,\quad  [\d\big]_{C^\theta(\PP_2)}+[D_{\d}^i \f(\d)]_{C^\theta(\PP_2)}\le C(i, M) \varepsilon_0,\\
&\big\|(\u, \nabla\d)\big\|_{L^p(\PP_2)}\le C(p) \varepsilon_1.
\end{split}
    \label{eqn:Lpbound}
  \end{equation}
  Notice that $\nabla^{l-1} P$ satisfies
  \begin{equation}
 \begin{split}
    -\Delta \nabla^{l-1} P&=\dv^2\Big[\nabla^{l-1}\Big(\u\otimes\u+\nabla\d\odot\nabla\d-\frac{1}{2}|\nabla \d|^2I_3\\
    &-(F(\d)I_3-\fint_{\PP_2}F(\d)I_3)
    +S_\alpha[\Delta\d-\f(\d), \d]+\fint_{\PP_2}S_\alpha[\f(
    	\d), \d]\Big)\Big], 
    \label{}
    \end{split}
  \end{equation}
  Now let $\zeta\in C_0^\infty(B_{1+2^{-l}})$ be a cut-off function of $B_{1+2^{-(l+1)}+3^{-(l+1)}}$, and $P^{(1)}(\cdot , t):\R^3\to \R$, $-(1+2^{-1})^2\le t\le 0$, 
  \begin{equation}
  \begin{split}
    P^{(1)}(x, t)&:=\int_{\R^3}\nabla_x^2 G(x-y)\zeta(y)\Big[\u\otimes\u+\nabla\d\odot\nabla\d-\frac{1}{2}|\nabla\d|^2I_3\\
    &-(F(\d)I_3-\fint_{\PP_{2}}F(\d)I_3)+S_\alpha[\Delta\d-\f(\d), \d]+\fint_{\PP_{2}}S_\alpha[\f(\d),\d]\Big](y)dy, 
    \label{}
    \end{split}
  \end{equation}
  and $P^{(2)}(\cdot , t):=(P-P^{(1)})(\cdot , t)$. For $P^{(1)}$, we have that 
  \begin{equation*}
  \begin{split}
    \nabla^{l-1}P^{(1)}(x)&=\int_{\R^3}\nabla_x^2 G(x-y)\nabla^{l-1}\Big[\eta\Big( \u\otimes\u+\nabla\d\odot\nabla\d-\frac{1}{2}|\nabla\d|^2 I_3\\
    &-(F(\d)I_3-\fint_{\PP_{2}} F(\d)I_3)+S_\alpha[\Delta\d-\f(\d), \d] +\fint_{\PP_{2}}S_\alpha[\f(\d), \d])\Big) \Big](y) dy.
    \end{split}
  \end{equation*}
  By Calderon-Zygmund's singular integral estimate, with bounds \eqref{4.79}-\eqref{eqn:Lpbound} we can show that
  \begin{equation}
    \int_{\PP_{1+2^{-l}}}|\nabla^{l-1}P^{(1)}|^2dxdt\le C(l)\varepsilon_1.
    \label{}
  \end{equation}
  We see that $P^{(2)}$ satisfies
  \begin{equation}
    -\Delta P^{(2)}=0 \text{ in }B_{1+2^{-(l+1)}+3^{-(l+1)}}. 
    \label{}
  \end{equation}
  Then we derive from the regularity of harmonic function that for $1\le j\le 2l$, 
  \begin{align*}
    \int_{\PP_{1+2^{-(l+1)}+5^{-(l+1)}}}|\nabla^{j}P^{(2)}|^{\frac{3}{2}}dxdt
    &\le C\int_{\PP_{1+2^{-(l+1)}+4^{-(l+1)}}}|\nabla^{l-1} P^{(2)}|^{\frac{3}{2}}dxdt\\
    &\le C\int_{\PP_{1+2^{-l}}}|\nabla^{l-1}P|^{\frac{3}{2}}dxdt
    +C\int_{\PP_{1+2^{-l}}}|P^{(1)}|^{\frac{3}{2}}dxdt\\
    &\le C(l)\varepsilon_1. 
  \end{align*}
  Now take $l-$th order spatial derivative of the equation \eqref{eqn:ELmodel1}$_1$, we have\footnote{Strictly speaking, we need to take finite difference quotient $D_h^i\nabla^{l-1}$ of \eqref{eqn:ELmodel1}$_1$ and then sending $h\to 0$.}
  \begin{equation}
  \begin{split}
    &\pa_t(\nabla^l\u)+\nabla^l\nabla\cdot (\u\otimes\u)+\nabla^l\nabla P-\nabla^l\Delta\u\\&\quad=-\nabla^l\nabla\cdot \left[ \nabla\d\odot\nabla\d-\frac{1}{2}|\nabla\d|I_3-F(\d)I_3+S_\alpha[\Delta\d-\f(\d), \d] \right].
    \label{eqn:lthueq}
    \end{split}
  \end{equation}
  Let $\eta\in C_0^\infty(B_{1+2^{-l}})$. Multiplying \eqref{eqn:lthueq} by $\nabla^l \u\eta^2$ and integrating over $B_2$, we obtain\footnote{Strictly speaking, we need to multiply the equation by $D_h^i\nabla^{l-1}\u\eta^2$.}
  \begin{equation}
  \begin{split}
    &\frac{d}{dt}\int_{B_2}\frac{1}{2}|\nabla^l \u|^2\eta^2dx+\int_{B_2}|\nabla^{l+1}\u|^2\eta^2dx\\
    &=\int_{B_2}[\nabla^l(\u\otimes\u):\nabla\nabla^l\u\eta^2+\nabla^l(\u\otimes\u):\nabla^l\u\otimes\nabla (\eta^2)]dx\\
    &\quad+\int_{B_2}\nabla^l P\cdot \nabla^l\u\cdot \nabla(\eta^2)dx
    -\int_{B_2}\nabla\nabla^{l}\u:\nabla^{l}\u\otimes\nabla(\eta^2)dx\\
    &\quad+\int_{B_2}\nabla^l\left[ \nabla\d\odot\nabla\d-\frac{1}{2}|\nabla\d|^2-F(\d)I_3-S_\alpha[\f(\d), \d] \right]:\nabla(\nabla^l\u\eta^2)dx\\
    &\quad+\int_{B_2}\nabla^l S_\alpha[\Delta\d, \d]:(\nabla \nabla^l\u\eta^2+\nabla^l\u\otimes \nabla (\eta^2))dx
    \\&:=I_1+I_2+I_3+I_4+I_5.
    \end{split}
    \label{}
  \end{equation}
Now we have the following estimate:
\begin{align*}
  |I_1|&\lesssim \int_{B_2}\left[ |\u||\nabla^{l}\u|+\sum_{j=1}^{l-1}|\nabla^j\u||\nabla^{l-j}\u| \right](|\nabla^{l+1}\u|\eta^2+|\nabla^l\u|\eta|\nabla\eta|)dx\\
  &\le \frac{1}{32} \int_{B_2}|\nabla^{l+1}\u|^2\eta^2dx
  +C\int_{B_2}|\u|^2|\nabla^l\u|^2\eta^2dx\\
  & +C\int_{B_2}\sum_{j=1}^{l-1}|\nabla^j\u|^2|\nabla^{l-j}\u|^2\eta^2dx+C\int_{\supp \eta}|\nabla^l\u|^2dx,\\
  |I_2|&\lesssim \int_{B_2}[|\nabla^{l-1}P^{(1)}|(|\nabla^{l+1}\u|\eta|\nabla\eta|+|\nabla^l\u||\nabla^2(\eta^2)|)+|\u||\nabla^l(\nabla^l P^{(2)} \nabla\eta^2)|)]dx\\
  &\le \frac{1}{32}\int_{B_2}|\nabla^{l+1}\u|^2\eta^2dx
  +C\int_{\supp\eta}(|\nabla^{l-1}P^{(1)}|^2+|\nabla^l\u|^2)dx\\
  &+C\int_{\supp \eta}(|\u|^{3}+|P^{(2)}|^{\frac{3}{2}})dx\\
\\
  |I_3|&\lesssim \int_{B_2}|\nabla^{l+1}\u|\eta |\nabla^l\u||\nabla\eta|dx
  \le \frac{1}{32}\int_{B_2}|\nabla^{l+1}\u|^2\eta^2dx
  + C\int_{\supp \eta}|\nabla^l \u|^2dx,
  \\
  |I_4|&\le \int_{B_2}\Big( |\nabla^{l+1}\d||\nabla\d|+\sum_{j=1}^{l-1}|\nabla^{j+1}\d||\nabla^{l+1-j}\d|+|\nabla^lF(\d)|+|\nabla^{l}(S_\alpha[f(\d),\d])| \Big)\\
  &\quad \times(|\nabla^{l+1}\u|\eta^2+|\nabla^l\u||\nabla(\eta^2)|)dx\\
  &\le \frac{1}{32}\int_{B_2}|\nabla^{l+1}\u|^2\eta^2dx
  +C\int_{B_2}(|\nabla^{l+1}\d|^2|\nabla\d|^2\eta^2+\sum_{j=1}^{l-1}|\nabla^{j+1}\d|^2|\nabla^{l+1-j}\d|^2\eta^2)dx\\
  &\qquad+C\int_{B_2} (|\nabla^l F(\d)|^2\eta^2+|\nabla^l S_{\alpha}[\f(\d),\d]|^2\eta^2)dx
   +C\int_{\supp \eta}|\nabla^l\u|^2dx.
\end{align*}
For $I_5$, set $A^l_\alpha:=S_\alpha[\nabla^l\Delta\d, \d]$, and $B^l_\alpha:=\nabla^l S_\alpha[\Delta\d,\d]-A^l_\alpha$, then we have
\begin{align*}
  I_5&=\int_{B_2}[A_\alpha^l: \nabla\nabla^l\u\eta^2+ B^l_\alpha:\nabla\nabla^l\u \eta^2+A_\alpha^l:\nabla^l\u\otimes\nabla(\eta^2)+B_\alpha^l:\nabla^l\u\otimes\nabla(\eta^2)]dx
  \\&=:I_{51}+I_{52}+I_{53}+I_{54}.
\end{align*}
Then we get
\begin{align*}
  |I_{52}|&\le \frac{1}{32}\int_{B_2}|\nabla^{l+1}\u|^2\eta^2dx
  +C\int_{B_2}|\nabla\d|^2|\nabla^{l+1}\d|^2\eta^2dx\\
  &\ \ \ +C\int_{B_2}\sum_{j=1}^{l-1}|\nabla^{j+1}\d|^2|\nabla^{l+1-j}\d|^2\eta^2dx, \\
  |I_{53}|&\le \frac{1}{32}\int_{B_2}|\nabla^{l+2}\d|^2\eta^2dx
  +C\int_{\supp \eta}|\nabla^l\u|^2dx, \\
  |I_{54}|&\lesssim \int_{\supp \eta}|\nabla^l\u|^2 dx
  +\int_{B_2}|\nabla\d|^2|\nabla^{l+1}\d|^2\eta^2 dx
  +\int_{B_2}\sum_{j=1}^{l-1}|\nabla^{j+1}\d|^2|\nabla^{l+1-j}\d|^2\eta^2 dx.
\end{align*}
  Now we take $(l+1)$-th order spartial derivative of the equation \eqref{eqn:ELmodel1}$_3$, we have
  \begin{equation}
    \pa_t(\nabla\nabla^{l}\d)+\nabla\nabla^{l}(\u\cdot \nabla\d)-\nabla\nabla^{l}T_\alpha[\nabla\u, \d]=\Delta\nabla\nabla^{l}\d-\nabla\nabla^{l}\f(\d).
    \label{eqn:lthdeq}
  \end{equation}
  Multiplying \eqref{eqn:lthdeq} by $\nabla\nabla^{l}\d\eta^2$ and integrating over $B_2$, we obtain\footnote{Strictly speaking, we need to multiply the equation by $D_h^i \nabla^l \d \eta^2$.}
  \begin{equation}
  \begin{split}
    &\frac{d}{dt}\int_{B_2}\frac{1}{2}|\nabla^{l+1}\d|^2\eta^2dx
    +\int_{B_2}|\nabla^{l+2}\d|^2\eta^2dx\\
    &=\int_{B_2}\nabla^{l}(\u\cdot \nabla\d)\cdot\nabla \cdot (\nabla\nabla^{l}\d\eta^2)dx\\
    &\quad-\int_{B_2}[\nabla^l T_\alpha[\nabla\u, \d]\cdot \Delta\nabla^l \d\eta^2+ \nabla^l T_\alpha[\nabla\u, \d]\cdot (\nabla(\eta^2)\cdot \nabla \nabla^l \d)]dx\\
    &\quad-\int_{B_2}\nabla\nabla^l\f(\d):\nabla \nabla^l\d\eta^2dx=:K_1+K_2+K_3.
    \end{split}
    \label{}
  \end{equation}
Then we have the following estimates:
\begin{align*}
  |K_1|&\lesssim\int_{B_2}\big[ |\nabla\d||\nabla^l\u|+|\u||\nabla^{l+1}\d|+\sum_{j=1}^{l-1}|\nabla^j\u||\nabla^{l-j+1}\d| \big]\big( |\nabla^{l+2}\d|\eta^2+|\nabla^{l+1}\d|\eta|\nabla\eta| \big)dx\\
  &\le \frac{1}{32}\int_{B_2}|\nabla^{l+2}\d|^2\eta^2dx
  +C\int_{B_2}|\nabla\d|^2|\nabla^l\u|^2\eta^2dx
  +C\int_{B_2}|\u|^2|\nabla^{l+1}\d|^2\eta^2dx\\
  &\qquad+C\int_{B_2}\sum_{j=1}^{l-1}|\nabla^j\u|^2|\nabla^{l-j+1}\d|^2\eta^2dx
  +C\int_{\supp \eta}|\nabla^{l+1}\d|^2dx,\\
  |K_3|&\lesssim \int_{B_2} |\nabla^{l+1}\d|^2\eta^2dx+\int_{B_2}|\nabla^{l+1} \f(\d)|^2\eta^2dx. 
\end{align*}
For $K_2$, we set $C_\alpha^l=:T_\alpha[\nabla\nabla^l\u, \d]$, $D_\alpha^l:=\nabla^l T_\alpha[\nabla\u, \d]-C_\alpha^l$, then we have 
\begin{align*}
  K_2&=-\int_{B_2} [C_\alpha^l\cdot \Delta\nabla^l\d\eta^2+D_\alpha^l\cdot \Delta\nabla^l\d\eta^2+C_\alpha^l\cdot (\nabla(\eta^2)\cdot \nabla\nabla^l\d)+D_\alpha^l\cdot (\nabla(\eta^2)\cdot \nabla\nabla^l\d)]dx\\
  &=:K_{21}+K_{22}+K_{23}+K_{24}.
\end{align*}
Now we estimate 
\begin{align*}
  |K_{22}|&\le \frac{1}{32}\int_{B_2}|\nabla^{l+2}\d|^2\eta^2dx+C\int_{B_2}|\nabla\d|^2|\nabla^l\u|^2\eta^2dx+C\int_{B_2}\sum_{j=1}^{l-1}|\nabla^{j}\u|^2|\nabla^{l+1-j}\d|^2\eta^2dx, \\
  |K_{23}|&\le \frac{1}{32}\int_{B_2}|\nabla^{l+1}\u|^2\eta^2dx
  + C\int_{\supp \eta}|\nabla^{l+1}\d|^2dx, \\
  |K_{24}|&\lesssim \int_{\supp \eta} |\nabla^{l+1}\d|^2dx
  +\int_{B_2}|\nabla\d|^2|\nabla^l\u|^2\eta^2dx
  +\int_{B_2}\sum_{j=1}^{l-1}|\nabla^{j}\u|^2|\nabla^{l+1-j}\d|^2\eta^2dx.
\end{align*}
Combine all estimate above, and with the cancellation $I_{51}=K_{21}$, we arrive at
\begin{equation}
\begin{split}
  &\frac{d}{dt}\int_{B_2}\left( |\nabla^l\u|^2+|\nabla^{l+1}\d|^2 \right)\eta^2dx
  +\int_{B_2}\left( |\nabla^{l+1}\u|^2+|\nabla^{l+2}\d|^2 \right)\eta^2dx\\
  &\le C\int_{B_2}(|\u|^2|\nabla^l\u|^2\eta^2 +\sum_{j=1}^{l-1}|\nabla^j\u|^2|\nabla^{l-j}\u|^2\eta^2)dx
  +C\int_{\supp \eta}(|\nabla^l\u|^2+|\nabla^{l+1}\d|^2)dx\\
  &+C\int_{\supp \eta}(|\nabla^l\u|^2+|\nabla^{l+1}\d|^2+|\nabla^{l-1}P^{(1)}|^2+ |\u|^{3}+|P^{(2)}|^{\frac{3}{2}})dx\\
 &+C\int_{B_2}(|\nabla\d|^2|\nabla^{l+1}\d|^2\eta^2+\sum_{j=1}^{l-1}|\nabla^{j+1}\d|^2|\nabla^{l+1-j}\d|^2\eta^2)dx\\
 &+C\int_{B_2} (|\nabla^l F(\d)|^2\eta^2+|\nabla^l S_{\alpha}[\f(\d),\d]|^2\eta^2+|\nabla^{l+1}\f(\d)|^2\eta^2)dx\\
  &+C\int_{B_2}(|\nabla\d|^2|\nabla^l\u|^2\eta^2+|\u|^2|\nabla^{l+1}\d|^2\eta^2 +\sum_{j=1}^{l-1}|\nabla^j\d|^2|\nabla^{l+1-j}\d|^2\eta^2)dx\\
  &+C\int_{B_2}\sum_{j=1}^{l-1}|\nabla^j\u|^2|\nabla^{l+1-j}\d|^2\eta^2dx.
  \label{eqn:HighEn1}
  \end{split}
\end{equation}
By Sobolev-interpolation inequality, we have
\begin{align*}
  &\int_{B_2}|\u|^2|\nabla^l\u|^2\eta^2dx\\
  &\le \left\|\nabla^l \u\eta\right\|_{L^6(B_2)}\left\|\nabla^l \u\eta\right\|_{L^2(B_2)}\left\|\u\right\|_{L^{6}(\supp \eta)}^2\\
  &\le C\left\|\nabla(\nabla^l\u\eta)\right\|_{L^2(B_2)}\left\|\nabla^l\u\eta\right\|_{L^2(B_2)}\left\|\u\right\|_{L^{6}(\supp \eta)}^2\\
  &\le \frac{1}{32}\int_{B_2}|\nabla^{l+1}\u|^2\eta^2dx
  + C\int_{\supp \eta}|\nabla^l\u|^2dx
  +C\left\|\u\right\|_{L^{6}(\supp \eta)}^4\int_{B_2}|\nabla^l\u|^2\eta^2dx, 
  \end{align*}
  \begin{align*}
 & \int_{B_2}|\u|^2|\nabla^{l+1}\d|^2\eta^2dx\\
  &\le \frac{1}{32}\int_{B_2}|\nabla^{l+2}\d|^2\eta^2dx
  +C\int_{\supp \eta}|\nabla^{l+1}\d|^2dx
  +C\left\|\u\right\|_{L^{6}(\supp \eta)}^4\int_{B_2}|\nabla^{l+1}\d|^2\eta^2dx,
  \end{align*}
  \begin{align*}
 & \int_{B_2}|\nabla\d|^2|\nabla^{l}\u|^2\eta^2dx\\
 &\le \frac{1}{32}\int_{B_2}|\nabla^{l+1}\u|\eta^2dx
 +C\int_{\supp \eta}|\nabla^{l}\u|^2dx
 +C\left\|\nabla\d\right\|_{L^{6}(\supp \eta)}^4\int_{B_2} |\nabla^l\u|^2\eta^2 dx, 
 \end{align*}
 \begin{align*}
  &\int_{B_2}|\nabla\d|^2|\nabla^{l+1}\d|^2\eta^2dx\\
  &\le \frac{1}{32}\int_{B_2}|\nabla^{l+2}\d|^2\eta^2dx
  +C\int_{\supp\eta}|\nabla^{l+1}\d|^2dx
  +C\left\|\nabla\d\right\|_{L^{6}(\supp\eta)}^4\int_{B_2}|\nabla^{l+1}\d|^2\eta^2dx.
\end{align*}
For lower order terms, we have that for $1\le j\le l-1$,
\begin{align*}
  \int_{B_2}|\nabla^{l-1}\u|^2|\nabla^{j}\u|^2\eta^2dx
  &\le \left\|\nabla^{l-1}\u\eta\right\|_{L^6(B_2)}^2\left\|\nabla^{j}\u\right\|_{L^3(\supp \eta)}^2\\
  &\le C\left\|\nabla(\nabla^{l-1}\u\eta)\right\|_{L^2(B_2)}^2\left\|\nabla^{j}\u\right\|_{L^3(\supp \eta)}^2\\
  &\le C\left\|\nabla^{j}\u\right\|_{L^3(\supp \eta)}^2\int_{B_2}|\nabla^{l}\u|^2\eta^2dx\\
  &+ C \left\|\nabla^{l-1}\u\right\|_{L^3(\supp \eta)}^2
  \left\|\nabla^{j}\u\right\|_{L^3(\supp \eta)}^2, \\
  \int_{B_2}|\nabla^{l}\d|^2|\nabla^j\u|^2\eta^2dx
  &\le C\left\|\nabla^j\u\right\|_{L^3(\supp \eta)}^2\int_{B_2}|\nabla^{l+1}\d|^2\eta^2dx\\
  &+C\left\|\nabla^l\d\right\|_{L^3(\supp\eta)}^2\left\|\nabla^j\u\right\|_{L^3(\supp \eta)}^2, \\
  \int_{B_2}|\nabla^{l-1}\u||\nabla^{j+1}\d|^2\eta^2dx
  &\le C\left\|\nabla^{j+1}\d\right\|_{L^3(\supp \eta)}^2\int_{B_2}|\nabla^l\u|\eta^2\\
  &+C\left\|\nabla^{l-1}\u\right\|_{L^3(\supp \eta)}^2\left\|\nabla^{j+1}\d\right\|_{L^3(\supp \eta)}^2, \\
  \int_{B_2}|\nabla^l\d|^2|\nabla^{j+1}\d|^2\eta^2dx
  &\le C\left\|\nabla^{j+1}\d\right\|_{L^3(\supp \eta)}^2\int_{B_2}|\nabla^{l+1}\d|^2\eta^2dx\\
  &+C\left\|\nabla^l\d\right\|_{L^3(\supp \eta)}^2\left\|\nabla^{j+1}\d\right\|_{L^3(\supp \eta)}^2, 
  \end{align*}
and for $1\le j, k\le l-2$ that 
\begin{align*}
  \int_{B_2}|\nabla^j\u|^2|\nabla^{k+1}\d|^2\eta^2dx
  &\le C\int_{\supp\eta}|\nabla^j\u|^4dx+C\int_{\supp \eta}|\nabla^{k+1}\d|^4dx.
\end{align*}
Since $|\d|\le M$ in $\PP_2$,  by the calculus inequality for $H^s$ (c.f. \cite[Appendix]{KainermanSergiu1982}), 
we have for $-4\le t\le 0$, 
\begin{align*}
  \left\|\nabla^l F(\d)\right\|_{L^2(\supp \eta)}&\lesssim \left\|\nabla^l\d\right\|_{L^2(\supp \eta)}, \\
  \left\|\nabla^lS_\alpha[\f(\d), \d]\right\|_{L^2(\supp \eta)}&\lesssim \left\|\nabla^l \d\right\|_{L^2(\supp \eta)}, \\
  \left\|\nabla^{l+1}\f(\d)\right\|_{L^2(\supp \eta)}&\lesssim \left\|\nabla^{l+1}\d\right\|_{L^2(\supp\eta)}.
\end{align*}

Put all these estimates together, we arrive at
\begin{equation}
  \begin{split}
    &\frac{d}{dt}\int_{B_2}\left( |\nabla^l\u|^2+|\nabla^{l+1}\d|^2 \right)\eta^2dx
    +\int_{B_2}\left( |\nabla^{l+1}\u|^2+|\nabla^{l+2}\d|^2 \right)\eta^2dx\\
    &\le C\int_{\supp \eta} [|\nabla^l\u|^2+|\nabla^{l+1}\d|^2+|\nabla^l\d|^2+\sum_{j=1}^{l-2}(|\nabla^j\u|^4+|\nabla^{j+1}\d|^4]dx)\\
    &+C\int_{\supp\eta}(|\u|^{3}+|\nabla^{l-1}P^{(1)}|^2+|P^{(2)}|^{\frac{3}{2}})dx\\
    &+C \Big(\left\|\nabla^{l-1}\u\right\|_{L^3(\supp \eta)}^4+\left\|\nabla^l \d\right\|_{L^3(\supp \eta)}^4+\sum_{j=1}^{l-1}\big(\left\|\nabla^j\u\right\|_{L^3(\supp\eta)}^4+\left\|\nabla^{j+1}\d\right\|_{L^3(\supp \eta)}^4\big)\Big)\\
    &+C\Big(\left\|(\u, \nabla\d)\right\|_{L^{6}(B_2)}^{4}+\sum_{j=1}^{l-1}\|(\nabla ^j\u, \nabla^{j+1}\d)\|_{L^3(B_2)}^2\Big)\int_{B_2}(|\nabla^{l}\u|^2+|\nabla^{l+1}\d|^2)\eta^2dx.
  \end{split}
  \label{eqn:highNonEnergy}
\end{equation}
Now let $\eta\in C_0^\infty(B_{1+2^{-(l+1)}+5^{-(l+1)}})$ be a cut-off function of $B_{1+2^{-(l+1)}+10^{-(l+1)}}$. We can apply the Gronwall's inequality to \eqref{eqn:highNonEnergy}, together with \eqref{4.79}-\eqref{eqn:Lpbound} to get 
  \begin{equation}
\begin{split}
&\sup_{-\big( 1+2^{-(l+1)+10^{-(l+1)}} \big)^2\le t\le 0}\int_{B_{1+2^{-(l+1)}+10^{-(l+1)}}}(|\nabla^l\u|^2+|\nabla^{l+1}\d|^2)dx\\
&\quad+\int_{\PP_{1+2^{-(l+1)}+10^{-(l+1)}}}\left( |\nabla^{l+1}\u|^2+|\nabla^{l+2}\d|^2 \right)dxdt\\
&\le C(l)\varepsilon_1.
\end{split}
\label{4.94}
\end{equation}

Recall that $\nabla^l P$ satisfies
\begin{equation}
\begin{split}
    -\Delta \nabla^{l} P&=\dv^2\Big[\nabla^{l}\Big(\u\otimes\u+\nabla\d\odot\nabla\d-\frac{1}{2}|\nabla \d|^2I_3\\
&-(F(\d)I_3-\fint_{\PP_2}F(\d)I_3)
+S_\alpha[\Delta\d-\f(\d), \d]+\fint_{\PP_2}S_\alpha[\f(
\d), \d]\Big)\Big].
\end{split}
\end{equation}
Then by the Calder\'{o}n-Zygmund theory and \eqref{4.79}-\eqref{eqn:Lpbound}, \eqref{4.94} we can show
\begin{equation}
\begin{split}
  &\int_{\PP_{1+2^{-(l+1)}}}|\nabla^l P|^{\frac{3}{2}}dxdt\le C(l)\varepsilon_1.
  \end{split}
  \label{}
\end{equation}
This yields that the conclusion holds for $k=l$. Thus the proof is complete. 
\end{proof}

\section{Partial regularity}
As a consequence of Lemma \ref{lemma:HigherOrderRegularity}, we get the following regularity criteria for \eqref{eqn:ELmodel1}: 
\begin{corollary}
	For a suitable weak solution $(\u, \d, P)$ to \eqref{eqn:ELmodel1}, if $z\in {\R^3}\times (0, \infty)$ satisfies
	\begin{equation}
	\left\{
	\begin{array}{l}
	\sup_{0<r<\delta}|\d_{z, r}|<\infty, \\
	\liminf_{r\to 0+}\Phi(z, r)=0, 
	\end{array}
	\right.
	\label{eqn:regpoint}
	\end{equation}
	Then there exists $\delta_1>0$ such that $(\u, \d)\in C^\infty(\PP_{\delta_1}(z))$.
	\label{cor:regcrateria}
\end{corollary}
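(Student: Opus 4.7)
The plan is to reduce the statement to a single application of Lemma \ref{lemma:localboundedness} followed by Lemma \ref{lemma:HigherOrderRegularity}, after carefully tuning the mass parameter $M$ in those lemmas to accommodate the averaged bound supplied by \eqref{eqn:regpoint}$_1$.

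First I would set
\begin{equation*}
	M := 4\sup_{0<r<\delta}|\d_{z,r}|,
\end{equation*}
which is finite by hypothesis, and freeze the constants $\varepsilon_2 = \varepsilon_2(M)$ and $r_0(M)$ furnished by Lemma \ref{lemma:localboundedness} (and, behind it, by Lemma \ref{lemma:iterative}). With this choice, every scale $r\in(0,\delta)$ automatically satisfies $|\d_{z,r}|\le M/4$, so only the pressure/energy smallness still has to be arranged.

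The second hypothesis $\liminf_{r\to 0^+}\Phi(z,r)=0$ provides this: I would pick $r_1 \in (0,\min\{\delta, r_0(M)\})$ so small that
\begin{equation*}
	\Phi(z, r_1)\le \varepsilon_2^{3}.
\end{equation*}
At scale $r_1$ around $z_0=z$, both hypotheses of Lemma \ref{lemma:localboundedness} then hold, giving $|\d|\le M$ on $\PP_{r_1/2}(z)$, $\d\in C^\theta$ there, and $(\u,\nabla\d)\in L^p(\PP_{r_1/4}(z))$ for every $p<\infty$. The same data feed verbatim into Lemma \ref{lemma:HigherOrderRegularity}, which upgrades these bounds to $(\u,\d)\in C^\infty(\PP_{r_1/4}(z))$. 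Setting $\delta_1 := r_1/4$ concludes the argument.

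There is no genuine obstacle here, since all the analytic work has been done in Sections 4; the only conceptual point is calibrating $M$ \emph{before} selecting $r_1$, so that the bound $|\d_{z,r_1}|\le M/4$ required by Lemma \ref{lemma:localboundedness} is automatic rather than an extra hypothesis to verify. If I had instead first chosen $r_1$ and then $M$, the constants $\varepsilon_2$ and $r_0$ would depend on $r_1$, defeating the ability to make $\Phi(z,r_1)$ small via the $\liminf$.
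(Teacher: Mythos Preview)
Your proposal is correct and matches the paper's intent: the corollary is stated there simply as ``a consequence of Lemma \ref{lemma:HigherOrderRegularity}'' with no further proof, and your argument is precisely the natural way to unpack that consequence—fix $M$ from \eqref{eqn:regpoint}$_1$ first, then use \eqref{eqn:regpoint}$_2$ to locate a scale $r_1$ meeting the smallness threshold of Lemma \ref{lemma:localboundedness}, and finally invoke Lemma \ref{lemma:HigherOrderRegularity}. Your observation about the order of choosing $M$ before $r_1$ is exactly the point that makes the deduction work; the only cosmetic caveat is that if $\sup_{0<r<\delta}|\d_{z,r}|=0$ you should take $M$ to be any positive number rather than zero.
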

The following Lemma is well-known, see \cite{GiaquintaGiusti1973}. 
\begin{lemma}
  Let $\d$ be a function in $L^{6}({\R^3}\times(0,\infty))$, and let $z=(x, t)\in {\R^3}\times(0, \infty)$ such that 
  \begin{equation}
    \fint_{\PP_r(z)}|\d-\d_{z, r}|^{6}dxdt\le Cr^{\delta}
    \label{}
  \end{equation}
  for some $\delta>0$ and some $C$ depending on $\d$ and $z$. Then $\lim_{r\to 0}\d_{z, r}$ exists, and is finite.
  \label{lemma:dalp}
\end{lemma}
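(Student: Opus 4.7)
\medskip

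\noindent\textbf{Proof proposal.} The plan is to show that the map $r\mapsto \d_{z,r}$ is Cauchy as $r\to 0^+$ along a dyadic sequence, and then use the hypothesis to transfer this to convergence along all $r\to 0^+$. Throughout, write $\d_r:=\d_{z,r}$ for brevity.

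First, for any $0<s\le r$, I would use H\"older's inequality and the elementary inclusion $\PP_s(z)\subset \PP_r(z)$ to estimate
\begin{equation*}
|\d_s-\d_r|=\Big|\fint_{\PP_s(z)}(\d-\d_r)\,dxdt\Big|\le \Big(\fint_{\PP_s(z)}|\d-\d_r|^{6}\,dxdt\Big)^{\frac{1}{6}}\le \Big(\frac{|\PP_r(z)|}{|\PP_s(z)|}\Big)^{\frac{1}{6}}\Big(\fint_{\PP_r(z)}|\d-\d_r|^{6}\,dxdt\Big)^{\frac{1}{6}}.
\end{equation*}
Specializing to the dyadic choice $r_k=2^{-k}r_0$ (for some fixed small $r_0>0$) yields $|\PP_{r_k}|/|\PP_{r_{k+1}}|=2^{5}$, so combined with the assumed decay,
\begin{equation*}
|\d_{r_{k+1}}-\d_{r_k}|\le 2^{5/6}\Big(\fint_{\PP_{r_k}(z)}|\d-\d_{r_k}|^{6}\,dxdt\Big)^{\frac{1}{6}}\le C r_k^{\delta/6}=C(2^{-\delta/6})^{k}r_0^{\delta/6}.
\end{equation*}
Since the right-hand side is a geometric series with ratio $2^{-\delta/6}<1$, the sequence $\{\d_{r_k}\}$ is Cauchy, and hence converges to some finite limit $\d_*\in\R^3$.

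Next, I would show that $\d_r\to \d_*$ as $r\to 0^+$ along the full continuum, not merely along the dyadic subsequence. For any $0<s<r_0$, choose $k$ with $r_{k+1}\le s<r_k$; then $|\PP_{r_k}|/|\PP_s|\le 2^{5}$, and the same H\"older argument as above gives
\begin{equation*}
|\d_s-\d_{r_k}|\le \Big(\frac{|\PP_{r_k}|}{|\PP_s|}\Big)^{\frac{1}{6}}\Big(\fint_{\PP_{r_k}(z)}|\d-\d_{r_k}|^{6}\,dxdt\Big)^{\frac{1}{6}}\le C r_k^{\delta/6}\to 0
\end{equation*}
as $k\to\infty$. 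Combining this with $\d_{r_k}\to \d_*$ yields $\lim_{s\to 0^+}\d_s=\d_*$, completing the proof.

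There is essentially no obstacle here; this is the standard Campanato/Dirichlet-growth argument adapted to the parabolic setting, and the only input beyond elementary estimates is the hypothesis $\fint_{\PP_r(z)}|\d-\d_{z,r}|^{6}\,dxdt\le C r^{\delta}$, which provides the geometric summability. The proof does not even use the $L^{6}$ global integrability of $\d$ in a substantive way—only pointwise (at $z$) integrability on some neighborhood is needed.
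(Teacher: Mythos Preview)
Your proof is correct and is precisely the standard Campanato/Giaquinta--Giusti argument. The paper does not supply its own proof of this lemma; it simply cites \cite{GiaquintaGiusti1973}, and your dyadic Cauchy-sequence argument is exactly what one finds there.
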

Next we will  control the oscillation of $\d$. For $0<T\le\infty$, denote  $Q_T={\R^3}\times(0,T)$.
Recall the fractional parabolic Sobolev space $W_p^{1, \frac{1}{2}}(Q_T)$, $1\le p<\infty$,
contains all $f$'s satisfying
\begin{equation*}
  \left\|f\right\|_{W_p^{1,  \frac{1}{2}}(Q_T)}=\left\|f\right\|_{L^p(Q_T)}+\|f\|_{\dot{W}^{1, \frac{1}{2}}_p(Q_T)}<\infty,
  \end{equation*}
  where 
  \begin{equation*}
  \|f\|_{\dot{W}^{1, \frac{1}{2}}_p(Q_T)}:=\big(\int_{Q_T}|\nabla f|^pdtdx+ \int_{\R^3}\ \int_0^T\int_0^T\frac{|f(x,t)-f(x,s)|^p}{|t-s|^{1+\frac{p}{2}}}dtds dx \big)^{\frac{1}{p}}.
\end{equation*}

From the global energy estimate \eqref{eqn:globaleng} and the Sobolev embedding theorem, we have
\begin{equation}
    (\u,\nabla \d) \in (L_t^\infty L_x^2 \cap L_t^2 H_x^1\cap L_t^{\frac{10}{3}}L_x^{\frac{10}{3}})(Q_T), \ 
    \d \in L_t^{10} L_x^{10}(Q_T).
  \label{}
\end{equation}
It follows that
\begin{equation*}
  \pa_t\d=\Delta\d-\f(\d)-\u\cdot \nabla\d+T_\alpha[\nabla\u,\d]\in L^{\frac{5}{3}}(Q_T).
\end{equation*}
From the fractional Galiardo-Nirenberg inequality \cite{Brezis2001fraction, Brezis2018Galiardo}, we get $\d\in W^{1,\frac{1}{2}}_{\frac{20}{7}}(Q_T) $, and
\begin{equation*}
  \left\|\d\right\|^2_{W^{1,\frac{1}{2}}_\frac{20}{7}(Q_T)}\le C \left\|\d\right\|_{L^{10}(Q_T)}\left\|(\pa_t \d,\nabla\d)\right\|_{L^{\frac{5}{3}}(Q_T)}+C\left\|\d\right\|_{L_t^{\frac{20}{7}}W_x^{1, \frac{20}{7}}(Q_T)}^2
  <\infty. 
\end{equation*}
Then the parabolic Sobolev-Poincar\'{e} inequality yields 
\begin{equation*}
  \begin{split}
    &\big( \fint_{\PP_r(z)}|\d-\d_{z,r}|^p dxdt \big)^{\frac{1}{p}}\\
    &\le C\Big[  r^{\frac{20}{7}-5}\int_{\PP_r(z)}|\nabla\d|^{\frac{20}{7}}
+ r^{\frac{20}{7}-5}\int_{B_r(x)}\int_{t-r^2}^t\int_{t-r^2}^t \frac{|\d(x,s_1)-\d(x,s_2)|^{\frac{20}{7}}}{|s_1-s_2|^{1+\frac{10}{7}}}ds_1ds_2dx\Big]^{\frac{7}{20}}.
     \end{split}
\end{equation*}
where $p=\frac{5\cdot\frac{20}{7}}{5-\frac{20}{7}}=\frac{20}{3}>6.$
Hence by H\"{o}lder inequality we have that 
\begin{equation}
\begin{split}
 & \big( \fint_{\PP_r(z)}|\d-\d_{z,r}|^6 dxdt \big)^{\frac{1}{6}}\le \big( \fint_{\PP_r(z)}|\d-\d_{z,r}|^{\frac{20}{3}} dxdt \big)^{\frac{3}{20}}\\
 &\le C\Big[  r^{\frac{20}{7}-5}\int_{\PP_r(z)}|\nabla\d|^{\frac{20}{7}}
  + r^{\frac{20}{7}-5}\int_{B_r(x)}\int_{t-r^2}^t\int_{t-r^2}^t \frac{|\d(x,s_1)-\d(x,s_2)|^{\frac{20}{7}}}{|s_1-s_2|^{1+\frac{10}{7}}}ds_1ds_2dx\Big]^{\frac{7}{20}}.
  \end{split}
  \label{eqn:SobolevPoincare}
\end{equation}

\begin{proof} [Proof of Theorem \ref{thm:main}] Define 
  \begin{equation*}
    \Sigma=\left\{ z\in{\R^3}\times(0, \infty): \liminf_{r\to0}\Phi(z, r)>\varepsilon_2^{6}
    \ {\rm{or}}\  \liminf_{r\to 0} |\d_{z,r}|=\infty\right\}.
  \end{equation*}
  It follows from Corollary \ref{cor:regcrateria} that $\Sigma$ is closed and $(\u, \d)\in C^\infty( {\R^3}\times(0, \infty)\setminus \Sigma)$. From \eqref{eqn:SobolevPoincare} and Lemma \ref{lemma:dalp}, we know that
  $\Sigma\subset\cap_{\sigma>0}\mathcal{S}_\sigma$, where $\mathcal{S}_\sigma$ is defined by 
  \begin{equation*}
    \begin{split}
      &\mathcal{S}_\sigma=\Big\{z\in Q_T:\liminf_{r\to 0}\big[r^{-\frac{5}{3}}\int_{\PP_r(z)}\big( |\u|^{\frac{10}{3}}+|\nabla\d|^{\frac{10}{3}} \big)dxdt+\big( r^{-\frac{5}{3}}\int_{\PP_r(z)}|P|^{\frac{5}{3}}dxdt \big)^{2}\big]>0,  \text{ or }\\
   &  \liminf_{r\to 0}r^{-\frac{15}{7}-\sigma}\big(\int_{\PP_r(z)}|\nabla\d|^{\frac{20}{7}}dxdt+\int_{B_r(x)}\int_{t-r^2}^t\int_{t-r^2}^t \frac{|\d(x,s_1)-\d(x,s_2)|^{\frac{20}{7}}}{|s_1-s_2|^{1+\frac{10}{7}}}ds_1ds_2 dx\big)>0 \Big\}.
    \end{split}
  \end{equation*}
  For the last integral, we have that 
  \begin{equation*}
    f(x, s_1, s_2)=\frac{|\d(x, s_1)-\d(x, s_2)|^{\frac{20}{7}}}{|s_1-s_2|^{1+\frac{10}{7}}}\in L^1(\R^3\times(0,T)\times(0,T)).
  \end{equation*}
  Let $\tilde{\delta}$ be the metric on $\R^3\times\R\times\R$:
  \begin{equation*}
    \tilde{\delta}(\xi_1, \xi_2)=\max\left\{ |x_1-x_2|, \sqrt{|t_1-t_2|}, \sqrt{|s_1-s_2|} \right\}, \quad \forall \xi_i=(x_i, t_i, s_i)\in \R^3\times\R\times\R.
  \end{equation*}
A standard covering argument implies that 
\begin{equation*}
  \widetilde{\mathcal{P}}^{\frac{15}{7}+\sigma}\big\{ (x,s,t)\in\R^3\times(0,T)\times(0, T):\liminf_{r\to 0+}r^{-\frac{15}{7}-\sigma} \int_{B_r(x)}\int_{s-r^2}^s\int_{t-r^2}^t f(\xi) d\xi>0 \big\}=0,
\end{equation*}
where $\widetilde{\mathcal{P}}^k$ denotes the $k$-dimensional Hausdorff measure on $\R^3\times\R_+\times\R_+$ with respect to the metric $\tilde{\delta}$. 

Since the map $T(x,t)=(x,t,t):\R^3\times\R\to\R^3\times\R\times\R$
is an isometric embedding of $(\R^3\times\R, \delta)$ into $(\R^3\times\R\times \R, \tilde{\delta})$,
we have that
\begin{equation}
  \begin{split}
  &\mathcal{P}^{\frac{15}{7}+\sigma}\left( \left\{ (x,t)\in Q_T:\liminf_{r\to 0+}r^{-\frac{15}{7}-\sigma} \int_{B_r(x)}\int_{t-r^2}^t\int_{t-r^2}^{t}f(\xi) d\xi>0 \right\} \right)\\
  &=\widetilde{\mathcal{P}}^{\frac{15}{7}+\sigma}\left( T\left[ \left\{ (x,t)\in Q_T:\liminf_{r\to 0+}r^{-\frac{15}{7}-\sigma}\int_{B_r(x)}\int_{t-r^2}^{t}\int_{t-r^2}^t f(\xi) d\xi>0 \right\} \right]\right)\\
  &=\widetilde{\mathcal{P}}^{\frac{15}{7}+\sigma}\left(\left\{ (x,t,t)\in Q_T\times (0,T):\liminf_{r\to 0+}r^{-\frac{15}{7}-\sigma}\int_{B_r(x)}\int_{t-r^2}^{t}\int_{t-r^2}^t f(\xi)d\xi>0 \right\}\right)\\
  &\le  \widetilde{\mathcal{P}}^{\frac{15}{7}+\sigma}\left(\left\{ (x,s,t)\in Q_T\times(0, T):\liminf_{r\to 0+}r^{-\frac{15}{7}-\sigma} \int_{B_r(x)}\int_{s-r^2}^s\int_{t-r^2}^t f(\xi)d\xi>0 \right\}\right)\\
  &=0.
\end{split}
  \label{eqn:measure1}
\end{equation}
Again, by a simple covering argument we can show
\begin{equation}
  \mathcal{P}^{\frac{15}{7}+\sigma}\Big( \big\{ z\in Q_T: r^{-\frac{15}{7}-\sigma}\int_{\PP_r(z)}|\nabla\d|^{\frac{20}{7}}dxdt>0 \big\} \Big)=0, 
  \label{eqn:measure2}
\end{equation}
and
\begin{equation}
  \mathcal{P}^{\frac{5}{3}}\Big( \big\{ z\in Q_T:\lim_{r\to 0}r^{-\frac{5}{3}}\int_{\PP_r(z)}(|\u|^{\frac{10}{3}}+|\nabla\d|^{\frac{10}{3}})dxdt+\big( r^{-\frac{5}{3}}\int_{\PP_r(z)}|P|^{\frac{5}{3}} \big)^2>0 \big\} \Big)=0.
  \label{eqn:measure3}
\end{equation}
It follows from \eqref{eqn:measure1}, \eqref{eqn:measure2} and \eqref{eqn:measure3} that $\P^{\frac{15}{7}+\sigma}(\mathcal{S}_\sigma)=0$ so that $\P^{\frac{15}{7}+\sigma}(\Sigma)=0, \forall \sigma>0.$
\end{proof}




\begin{thebibliography}{99}
	
	\bibitem{Brezis2001fraction}
	Ha\"{i}m Brezis and Petru Mironescu, \emph{Gagliardo-{N}irenberg, composition
		and products in fractional {S}obolev spaces}, J. Evol. Equ. \textbf{1}
	(2001), no.~4, 387--404, Dedicated to the memory of Tosio Kato.
	
	\bibitem{Brezis2018Galiardo}
	Ha\"{i}m Brezis and Petru Mironescu, \emph{Gagliardo-{N}irenberg inequalities and non-inequalities: the
		full story}, Ann. Inst. H. Poincar\'{e} Anal. Non Lin\'{e}aire \textbf{35}
	(2018), no.~5, 1355--1376.
	
	\bibitem{CKN1982ns}
	Luis~A. Caffarelli, Robert~V. Kohn, and Louis Nirenberg, \emph{Partial
		regularity of suitable weak solutions of the {N}avier-{S}tokes equations},
	Communications on Pure and Applied Mathematics \textbf{35} (1982), no.~6,
	771--831.
	
	\bibitem{de1993physics}
	Pierre-Gilles De~Gennes and Jacques Prost, \emph{The physics of liquid
		crystals}, vol.~83, Oxford university press, 1993.
	
	\bibitem{du2019suitable}
	Hengrong Du, Xianpeng Hu, and Changyou Wang, \emph{Suitable weak solutions for
		the co-rotational {B}eris-{E}dwards system in dimension three}, arXiv:1905.08440, to appear
		in Arch. Ration. Mech. Anal.
	
	\bibitem{ericksen1961conservation}
	Jerald~L. Ericksen, \emph{Conservation laws for liquid crystals}, Transactions
	of the Society of Rheology \textbf{5} (1961), no.~1, 23--34.
	
	\bibitem{Ericksen1962}
	Jerald~L. Ericksen, \emph{Hydrostatic theory of liquid crystals}, Arch. Ration. Mech.
	Anal. \textbf{9} (1962), 371--378. 
	
	\bibitem{Ericksen1990Variabledegree}
	Jerald~L. Ericksen, \emph{Liquid crystals with variable degree of orientation}, Arch.
	Rational Mech. Anal. \textbf{113} (1990), no.~2, 97--120. 
	
	\bibitem{GiaquintaGiusti1973}
	Mariano Giaquinta and Enrico Giusti, \emph{Partial regularity for the solutions to nonlinear parabolic systems}, Ann. Mat. Pura Appl. (4)  \textbf{97} (1973),253-266.
	
	\bibitem{HuangLinWang2014EricksenLeslieTwoDimension}
	Jinrui Huang, Fanghua Lin, and Changyou Wang, \emph{Regularity and existence of
		global solutions to the {E}ricksen-{L}eslie system in {$\Bbb{R}^2$}}, Comm.
	Math. Phys. \textbf{331} (2014), no.~2, 805--850.
	
	\bibitem{HuangWang2010Note}
	Tao Huang and Changyou Wang, \emph{Notes on the regularity of harmonic map
		systems}, Proc. Amer. Math. Soc. \textbf{138} (2010), no.~6, 2015--2023.
	
	\bibitem{jeffery1922motion}
	George~B. Jeffery, \emph{The motion of ellipsoidal particles immersed in a
		viscous fluid}, Proceedings of the Royal Society of London. Series A,
	Containing papers of a mathematical and physical character \textbf{102}
	(1922), no.~715, 161--179.
	\bibitem{KainermanSergiu1982}
	Sergiu Klainerman and Andrew Majda, \emph{Compressible and incompressible
		fluids}, Comm. Pure Appl. Math. \textbf{35} (1982), no.~5, 629--651.
	\bibitem{koch2020partial}
	Gabriel~S. Koch, \emph{Partial regularity for navier-stokes and liquid crystals
		inequalities without maximum principle}, arXiv preprint arXiv:2001.04098
	(2020).
	\bibitem{ladyzhenska1968parabolic}
	O.~A. Lady{\v{z}}enskaja, V.~A. Solonnikov, and N.~N. Ural'ceva, \emph{Linear
		and quasilinear equations of parabolic type}, Translated from the Russian by
	S. Smith. Translations of Mathematical Monographs, Vol. 23, American
	Mathematical Society, Providence, R.I., 1968.
	\bibitem{Leslie1968}
	F.~M. Leslie, \emph{Some constitutive equations for liquid crystals}, Arch.
	Rational Mech. Anal. \textbf{28} (1968), no.~4, 265--283. 
	
	\bibitem{lin1998newproof}
	Fang-Hua Lin, \emph{A new proof of the {C}affarelli-{K}ohn-{N}irenberg
		theorem}, Comm. Pure Appl. Math. \textbf{51} (1998), no.~3, 241--257.

	
	\bibitem{linliu1995nonparabolic}
	Fang-Hua Lin and Chun Liu, \emph{Nonparabolic dissipative systems modeling the
		flow of liquid crystals}, Comm. Pure Appl. Math. \textbf{48} (1995), no.~5,
	501--537.
	
	\bibitem{linliu1996partialregularity}
	Fang-Hua Lin and Chun Liu, \emph{Partial regularity of the dynamic system modeling the flow of
		liquid crystals}, Discrete Contin. Dynam. Systems \textbf{2} (1996), no.~1,
	1--22.
	
\bibitem{temam1984ns}
Roger Temam, \emph{Navier-{S}tokes equations}, third ed., Studies in
Mathematics and its Applications, vol.~2, North-Holland Publishing Co.,
Amsterdam, 1984, Theory and numerical analysis, With an appendix by F.
Thomasset. 
	
	\bibitem{WuXuLiu2012kinematic}
	Hao Wu, Xiang Xu, and Chun Liu, \emph{Asymptotic behavior for a nematic liquid
		crystal model with different kinematic transport properties}, Calc. Var.
	Partial Differential Equations \textbf{45} (2012), no.~3-4, 319--345.
	
	\bibitem{WuXuLiu2013GeneralELsystem}
	Hao Wu, Xiang Xu, and Chun Liu, \emph{On the general {E}ricksen-{L}eslie system: {P}arodi's relation,
		well-posedness and stability}, Arch. Ration. Mech. Anal. \textbf{208} (2013),
	no.~1, 59--107.
	
	
\end{thebibliography}
\end{document}